\providecommand{\U}[1]{\protect\rule{.1in}{.1in}}
\providecommand{\U}[1]{\protect\rule{.1in}{.1in}}
\providecommand{\U}[1]{\protect\rule{.1in}{.1in}}
\newtheorem{theorem}{Theorem}
\theoremstyle{plain}
\newtheorem{corollary}{Corollary}
\newtheorem{definition}{Definition}
\newtheorem{example}{Example}
\newtheorem{lemma}{Lemma}
\newtheorem{proposition}{Proposition}
\newtheorem{remark}{Remark}
\numberwithin{equation}{section}
\begin{document}
\title[Reverse H\"{o}lder inequalites revisited]{Reverse H\"{o}lder inequalites revisited: interpolation, extrapolation,
indices and doubling}
\author{Alvaro Corval\'{a}n}
\address{Instituto del Desarrollo Humano, Universidad Nacional de General Sarmiento, J.
M. Guti\'{e}rrez 1150, C.P. 1613, Malvinas Argentinas, Pcia de Bs.As,
Rep\'{u}blica Argentina}
\email{acorvala@ungs.edu.ar}
\author{Mario Milman}
\address{IAM- Instituto Argentino de Matem\'{a}tica "Alberto Calder\'{o}n", Buenos
Aires, Argentina}
\email{mario.milman@gmail.com}
\urladdr{https://sites.google.com/site/mariomilman}
\thanks{This paper is in final form and no version of it will be submitted for
publication elsewhere.}
\subjclass{ }
\keywords{Reverse H\"{o}lder, $A_{\infty}$ weights, interpolation, extrapolation,
self-improvement, indices.}

\begin{abstract}
Extending results in \cite{M} and \cite{MM} we characterize the classical
classes of weights that satisfy reverse H\"{o}lder inequalities in terms of
indices of suitable families of $K-$functionals of the weights. In particular,
we introduce a Samko type of index (cf. \cite{kara}) for families of
functions, that is based on quasi-monotonicity, and use it to provide an index
characterization of the $RH_{p}$ classes, as well as the limiting class $RH=$
$RH_{LLogL}=$. $\bigcup\limits_{p>1}RH_{p}$ (cf. \cite{BMR}),\ which in the
abstract case involves extrapolation spaces. Reverse H\"{o}lder inequalities
associated to $L(p,q)$ norms, and non-doubling measures are also treated.

\end{abstract}
\maketitle
\tableofcontents

\section{Introduction}

The usual applications of interpolation theory deal with the study of scales
of function spaces, and the operators acting on them. Indeed, the impact of
interpolation theory in classical analysis, pde's, approximation theory, and
functional analysis, is well documented (cf. \cite{BS}, \cite{BL},
\cite{Brud}, \cite{BB}, \cite{Krein}, \cite{ov}, \cite{tor1}, \cite{Tr}, and
the references therein). Somewhat less known is the fact that some of the
underlying techniques of interpolation theory can be also applied successfully
to study problems that one usually does not describe as \textquotedblleft
interpolation theoretic problems\textquotedblright.

In this vein, in \cite{M}, \cite{BMR}, \cite{MM}, \cite{MarM}, \cite{BMR1},
\cite{MarM1}, \cite{Mio}, we developed new methods to study classes of weights
that satisfy reverse H\"{o}lder inequalities, using tools from real
interpolation theory. It was shown how to transform the classical definitions
of the theory of reverse H\"{o}lder inequalities into inequalities for
suitable families of the $K-$functionals of the weights, that when combined
with the basic properties of the theory of real interpolation spaces, like the
representation of interpolation norms as averages of end-point norms
(\textquotedblleft reiteration\textquotedblright),\ with their crucial
\textquotedblleft scaling\textquotedblright, implied differential inequalities
whose solutions yield classical \textquotedblleft open
properties\textquotedblright\footnote{In a somewhat different direction, in
\cite{kru} other classical \textquotedblleft open\textquotedblright\ or self
improving properties, e.g. the open mapping theorem, were connected to a
suitable notion of distance for interpolation spaces. The precise relationship
between \cite{kru}, and the developments presented in this paper remains to be
investigated.}\ like Gehring's Lemma.

To better explain the contents of this paper it will be useful to review some
of the basic ideas connecting reverse H\"{o}lder inequalities, indices,
interpolation, and extrapolation\footnote{We have tried to accomodate
prospective readers that could be interested in the theory of weighted norm
inequalities or interpolation theory but perhaps are not familiar with both
areas simultaneously. This has led to a longer introduction, which we hope
will facilitate to introduce the underlying ideas to readers that feel that
they do not have the adequate background.}. We refer to Section \ref{sec:back}
for unexplained notation and background on interpolation theory and reverse
H\"{o}lder inequalities.

Given $1<p<\infty,$ we shall say that a weight\footnote{A positive locally
integrable function on $\mathbb{R}^{n}.$} $w$ satisfies a $p-$reverse
H\"{o}lder inequality, and we write $w\in RH_{p}$, if there exists a
constant\footnote{usually denoted by $\left\Vert w\right\Vert _{RH_{p}}$ (cf.
(\ref{revn}) below).} $C>0$ such that, for all cubes\footnote{In this paper
all the cubes are assumed to have their sides parallel to the coordinate
axes.} $Q$, we have%
\begin{equation}
\left\{  \frac{1}{\left\vert Q\right\vert }\int_{Q}w(x)^{p}dx\right\}
^{1/p}\leq C\left\{  \frac{1}{\left\vert Q\right\vert }\int_{Q}w(x)dx\right\}
. \label{intro1}%
\end{equation}
Fix a cube $Q_{0}.$ Through the use of local maximal inequalities, the fact
that $L^{p}=(L^{1},L^{\infty})_{1/p^{\prime},p}$, known computations of the
corresponding $K-$functionals (cf. (\ref{forK})), and the scaling provided by
Holmstedt's reiteration formula (cf. \cite[Corollary 3.6.2 (b), page 53]{BL}),%
\begin{equation}
K(t^{1/p},w;(L^{1},L^{\infty})_{1/p^{\prime},p},L^{\infty})\approx\left\{
\int_{0}^{t}[K(s,;L^{1},L^{\infty})s^{-1/p^{\prime}}]^{p}\frac{ds}{s}\right\}
^{1/p}, \label{intro3}%
\end{equation}
one can see (cf. \cite{M}) that (\ref{intro1}) implies that with a constant
independent of $Q_{0}$ we have that for all $0<t<\left\vert Q_{0}\right\vert
$,
\begin{align}
&  {\small K(t}^{1/p}{\small ,w\chi}_{Q_{0}}{\small ;(L}^{1}{\small (Q}%
_{0}{\small ),L}^{\infty}{\small (Q}_{0}{\small ))}_{1/p^{\prime}%
,p}{\small ,L}^{\infty}{\small (Q}_{0}{\small ))}\label{intro2}\\
&  {\small \leq}C{\small t}^{-1/p^{\prime}}{\small K(t,w\chi}_{Q_{0}%
}{\small ;L}^{1}{\small (Q}_{0}{\small ),L}^{\infty}{\small (Q}_{0}%
{\small )).}\nonumber
\end{align}
\newline Conversely, if there exists a constant $C>0$ such that (\ref{intro2})
holds for all cubes then it follows that $w\in RH_{p}$ (cf. Theorem
\ref{teo:p} in Section \ref{sec:classicalvs}). Moreover, underlying the
discussion above is the characterization of $RH_{p}$ through an implicit
differential inequality (cf. \cite{M}, \cite{BMR}, \cite{MM}). For a weight
$w\in RH_{p}$ and each cube $Q$, we let $\phi_{w,Q,1/p^{\prime}}%
(s)=K(s,w\chi_{Q};L^{1}(Q),L^{\infty}(Q))s^{-1/p^{\prime}},$ then there exists
a universal constant $C>0,$ such that for all cubes $Q,$
\begin{equation}
\int_{0}^{t}(\phi_{w,Q,1/p^{\prime}}(s))^{p}\frac{ds}{s}\leq C(\phi
_{w,Q,1/p^{\prime}}(t))^{p},0<t<\left\vert Q\right\vert . \label{bene1}%
\end{equation}

The inequality (\ref{bene1}) is central to our approach to reverse H\"{o}lder
inequalities (cf. \cite{M}, \cite{BMR}, \cite{MM}). Moreover, as it turns out,
the characterization of the solutions of inequalities of the form
(\ref{bene1}) is one of the achievements of all the classical theories of
indices (cf. \cite{BaSt}, \cite{boyd}, \cite{Krein}, \cite{Mal}, \cite{MM},
\cite{Sa}, and the references therein). Index theory shows that for each
\textbf{fixed} cube $Q_{0},$ we have the equivalence%
\begin{equation}
\int_{0}^{t}\left(  \phi_{w,Q_{0},1/p^{\prime}}(s)\right)  ^{p}\frac{ds}%
{s}\leq C\left(  \phi_{w,Q_{0},1/p^{\prime}}(t)\right)  ^{p}\Leftrightarrow
index(\phi_{w,Q_{0},1/p^{\prime}})>0, \label{sii}%
\end{equation}
where \textquotedblleft$index(\phi_{w,Q_{0},1/p^{\prime}})^{\prime\prime}$ is
a number, that can be defined in different ways (e.g. (e.g. \cite{Krein},
\cite{Mal}, \cite{Sa2})) and whose precise definition is not important right
now. However, for our purposes in this paper, we need to extend the
equivalence (\ref{sii}) in order to deal with \textbf{all} the cubes $Q_{0}$,
with a uniform constant $C.$ In other words, we need to extend the notion of
index originally defined on single functions to include families of functions.

In this paper we undertake to formalize some of the connections between
interpolation methods and the classical methods to study reverse H\"{o}lder
inequalities. In particular, we develop a suitable definition of
indices\footnote{. Our main inspiration for this came from \cite{M} that shows
that solutions of (\ref{bene1}) are quasi-increasing and the work of Samko and
her collaborators (cf. \cite{Sa}, \cite[Theorem 3.6]{kara}, \cite{Sa2}), who
among other definitions considers an index based on the notion of
quasi-monotonicity.} for the families of functions that allows us to extend
the equivalence (\ref{sii}) to the realm of families of functions. We define
the index of the family $\{K(\cdot,w\chi_{Q};L^{1}(Q),L^{\infty}(Q))\}_{Q}$
and obtain in the process a complete characterization of the reverse
H\"{o}lder classes of weights in terms of our indices (cf. Theorem
\ref{teo:galvanizado}):%
\begin{equation}
RH_{p}=\{w:ind\{K(\cdot,w\chi_{Q};L^{1}(Q),L^{\infty}(Q))\}_{Q}>1/p^{\prime
}\}. \label{rhp}%
\end{equation}
This characterization leads to a simple explanation of the open or self
improving properties underlying the theory (e.g. Gehring's Lemma). Indeed, if
$w\in RH_{p}$ then by (\ref{rhp}) it is possible to select $\varepsilon
:=\varepsilon(w),$ such that $p_{0}=p+\varepsilon,$ is such that
$ind\{K(\cdot,w\chi_{Q};L^{1}(Q),L^{\infty}(Q))\}_{Q}>1/p_{0}^{\prime
}>1/p^{\prime}$ and therefore, once again by (\ref{rhp}), it follows that
$w\in RH_{p_{0}}.$

The case $p=1$ requires a different treatment since in this case the
inequality (\ref{intro1}) is true for all weights$.$ Moreover, the usual form
of Holmstedt's formula (\ref{intro3}) does not hold. On the other hand, if we
replace $1/p^{\prime}$ by $0,$ then (\ref{bene1}) still makes sense and,
indeed, plays a r\^{o}le in the characterization of the limiting class $RH.$
It turns out that the description of $RH$ is connected with extrapolation
spaces (cf. \cite{BMR}, \cite{Go}, \cite{as}). We shall now develop this point
in some detail.

Let
\begin{equation}
RH=\bigcup\limits_{p>1}RH_{p}, \label{intro10}%
\end{equation}
then the limiting case of (\ref{rhp}) can be stated as (cf. Theorem
\ref{teo:galvanizado} (ii))%
\begin{equation}
RH=\{w:\text{ }ind\{K(\cdot,w\chi_{Q};L^{1}(Q),L^{\infty}(Q))\}_{Q}>0\}.
\label{intro10'}%
\end{equation}
The elements of $RH$ can be also characterized explicitly in terms of
comparisons of their averages that, in the abstract case, involves the use of
extrapolation spaces. Indeed, it turns out that the correct reverse H\"{o}lder
inequality in the limiting case is to compare the $LLogL$ averages of $w$ with
its $L^{1}$ averages\footnote{Interestingly, in \cite{BMR} we arrived first to
this formulation using interpolation/extrapolation. It is one instance where
interpolation was used as a discovery tool in classical analysis.} (cf.
\cite{FE}, \cite{FeKPi}, \cite{BMR}). The result can be stated as follows. Let
$RH_{LLogL}$ be the class of weights\footnote{see Definition \ref{def:log}}
$w$ such that there exists a constant $c>0,$ such that for all cubes $Q,$
\begin{equation}
\left\Vert w\right\Vert _{LLogL(Q,\frac{dx}{\left\vert Q\right\vert })}\leq
c\left\{  \frac{1}{\left\vert Q\right\vert }\int_{Q}w(x)dx\right\}  ,
\label{intro6}%
\end{equation}
then (cf. Theorem \ref{teo:galvanizado} (iii)),%
\begin{equation}
RH=RH_{LLogL}. \label{intro15}%
\end{equation}

In this framework, Gehring's Lemma for $RH_{LLogL}$ (cf. \cite{FE},
\cite{FeKPi}, \cite{BMR}) follows from the fact if $ind\{K(\cdot,w\chi
_{Q};L^{1}(Q),L^{\infty}(Q))\}_{Q}>0,$ then we can choose $p:p(w)>1,$ such
that (cf. \cite{MM}) $ind\{K(\cdot,w\chi_{Q};L^{1}(Q),L^{\infty}%
(Q))\}_{Q}>1/p^{\prime}$.

It is important to mention that the formalism we have outlined above works,
and indeed was first developed, in the general setting of
interpolation/extrapolation spaces. In the abstract theory we replace the pair
$(L^{1},L^{\infty})$ by a Banach pair $(X_{0},X_{1})$, and $(L^{1},L^{\infty
})_{1/p^{\prime},p}$ by $(X_{0},X_{1})_{\theta,q}$ (cf. Section \ref{sec:K},
Definition \ref{def:abajo}) and, of course, there are no considerations of
cubes. Note that, in general, the index \textquotedblleft$q$\textquotedblright%
\ may not be correlated in a specific way to the first index \textquotedblleft%
$\theta$\textquotedblright, and this uncoupling already manifests itself when
dealing with $L(p,q)$ spaces, as we now explain.

The Lorentz $L(p,q)=(L^{1},L^{\infty})_{1/p^{\prime},q}$ spaces are
quintessential interpolation spaces, so it is instructive to indicate some
possibly new results on reverse H\"{o}lder inequalities for Lorentz spaces,
that can be derived using our methods. For this purpose we now recall the
appropriate scaling that we use to define averages of Lorentz norms. It will
be actually easier to frame the discussion in a slightly more general setting.

Let $X:=X(R^{n})$ be a rearrangement invariant space, and let $X^{\prime}$ be
the associate space of $X.$ It is well known, and easy to see (cf. \cite{BS}),
that for every cube $Q,$ we have $\left\Vert \chi_{Q}\right\Vert
_{X}\left\Vert \chi_{Q}\right\Vert _{X^{\prime}}=\left\vert Q\right\vert ,$
this fact, combined with H\"{o}lder's inequality, yields
\[
\int\left\vert w\chi_{Q}\right\vert \leq\left\Vert w\chi_{Q}\right\Vert
_{X}\left\Vert \chi_{Q}\right\Vert _{X^{\prime}},
\]
yields%
\begin{equation}
\frac{1}{\left\vert Q\right\vert }\int\left\vert w\chi_{Q}\right\vert
\leq\frac{\left\Vert w\chi_{Q}\right\Vert _{X}}{\left\Vert \chi_{Q}\right\Vert
_{X}}. \label{intro12}%
\end{equation}
In this context the natural maximal operator is given by (cf. \cite{BMR2}, and
the references therein),
\[
M_{X}w(x)=\sup\limits_{Q\ni x}\frac{\left\Vert w\chi_{Q}\right\Vert _{X}%
}{\left\Vert \chi_{Q}\right\Vert _{X}}.
\]
It follows that,%
\begin{equation}
Mw(x)\leq M_{X}w(x), \label{intro13'}%
\end{equation}
where $M:=M_{L^{1}}$ is the maximal operator of Hardy-Littlewood. For example,
if $X=L^{p},$ $1\leq p<\infty,$ then $M_{L^{p}}f=\{M\left\vert f\right\vert
^{p}\}^{1/p}.$ The choice $X=L(p,q)$ is of particular interest, for in this
case $M_{X}$ corresponds to the maximal function introduced by Stein
\cite{St}, that plays a r\^{o}le in the theory of Sobolev spaces and other
areas of classical Analysis (cf. \cite{Mo} for a recent reference).

To reverse (\ref{intro12}), we introduce the class $RH_{X}$ of weights $w$
such that there exists a constant $c>0$, such for all cubes $Q,$ it holds%
\begin{equation}
\frac{\left\Vert w\chi_{Q}\right\Vert _{X}}{\left\Vert \chi_{Q}\right\Vert
_{X}}\leq c\frac{1}{\left\vert Q\right\vert }\int w\chi_{Q}. \label{intro11}%
\end{equation}
In terms of maximal functions, (\ref{intro11}) implies a reversal of
(\ref{intro13'}) which, in terms of rearrangements, is given by:%
\begin{equation}
(M_{X}w)^{\ast}(t)\leq c(Mw)^{\ast}(t). \label{intro13}%
\end{equation}
The class $RH_{p}$ corresponds to the choice $X=L^{p}.$ Moreover, note that
for $X=L(p,q),$ $\left\Vert \chi_{Q}\right\Vert _{L(p,q)}=\left\vert
Q\right\vert ^{1/p}$ and, therefore, the scaling of (\ref{intro11}) leads to
the consideration of averages controlled by $\frac{1}{t^{1/p}}K(t^{1/p}%
,f;L(p,q),L^{\infty})$ $.$ This scaling is compatible with the general
definition of reverse H\"{o}lder inequalities we give below (cf.
(\ref{Ktheta})) and, therefore, our interpolation machinery can be applied,
provided we have the appropriate rearrangement inequalities for the
corresponding maximal operators. Such inequalities are available for the
$L(p,q)$ spaces, with\footnote{Note that for $q\leq p,$ $RH_{L(p,q)}\subset
RH_{p},$ and therefore, $RH_{L(p,q)}$ inherits the self-improvement property
from $RH_{p}$.} $1<p\leq q.$ Indeed, in this case, the rearrangement
inequalities of \cite{BMR2} give%
\begin{equation}
(M_{L(p,q)}f)^{\ast}(t)\geq c\frac{1}{t^{1/p}}K(t^{1/p},f;L(p,q),L^{\infty}).
\label{intro14}%
\end{equation}
It follows that, if $w\in RH_{L(p,q)},$ $1<p\leq q,$ then, by (\ref{intro13})
and (\ref{intro14}), $w$ satisfies the $K-$functional inequality (cf.
(\ref{Ktheta}))
\[
\frac{1}{t^{1/p}}K(t^{1/p},f;L(p,q),L^{\infty})\leq C\frac{K(t,f;L^{1}%
,L^{\infty})}{t}.
\]
In other words, $w$ satisfies a version of (\ref{Ktheta}), and therefore (via
Holmstedt's formula!) we can setup an inequality of the form (\ref{sii}) and
show a Gehring self improving effect, even though the condition $w\in
RH_{L(p,q)},1<p\leq q,$ is weaker\footnote{since, indeed, by definition we
have $w\in RH_{p}\Rightarrow w\in RH_{L(p,q)}.$} than $w\in RH_{p}.$ However,
an argument with indices shows that the index $q$ is not important here (cf.
Remark \ref{introdespues}) and, in fact, we have,
\[
RH_{p}=RH_{L(p,q)},1<p\leq q<\infty.
\]
We refer to Section \ref{sec:lorentz} for a more detailed discussion.

The interpolation method can be also implemented when dealing with suitable
non-doubling weights that are absolutely continuous with respect to Lebesgue
measure (cf. \cite{MM}, \cite{MarM} for reverse H\"{o}lder inequalities, and
also \cite{Oro}, where the corresponding theory of  $A_{p}$ weights for
non-doubling measure is treated using classical methods). This example should
be of interest to classical analysts, and aficionados of interpolation theory.
The issue at hand is that in the non-doubling setting the leftmost inequality
in the following chain%
\[
(Mw)^{\ast}(t)\approx w^{\ast\ast}(t)=\frac{K(t,w;L^{1},L^{\infty})}{t}%
\]
does not hold (cf. \cite{aa}). Thus, a different mechanism is needed to relate
the information on the averages of $w,$ coming from conditions like $RH_{p},$
to information about $K-$functionals. The appropriate solution in this case is
to dispense with the classical maximal operator altogether and work directly
with a different expression of the $K-$functional for the weighted pair
$(L_{w}^{p}(\mathbb{R}^{n}),L^{\infty}(\mathbb{R}^{n}))$. Such formulae was
obtained in \cite{aa} (cf. also \cite{MarM} and Section \ref{sec:doubling}
below). We shall now review that part of the story.

For a given sequence of disjoint cubes (\textquotedblleft
packing\textquotedblright) $\pi=\left\{  Q_{i}\right\}  _{i=1}^{\left\vert
\pi\right\vert },$ with $\left\vert \pi\right\vert =\#$ cubes in $\pi$, we
associate a linear operator $S_{\pi}$, defined by
\begin{equation}
S_{\pi}(f)(x)=\sum_{i=1}^{\left\vert \pi\right\vert }\left(  \frac{1}%
{w(Q_{i})}\int_{Q_{i}}f(y)w(y)dy\right)  \chi_{Q_{i}}(x),\text{ \ \ \ \ }f\in
L_{w}^{1}(\mathbb{R}^{n})+L^{\infty}(\mathbb{R}^{n}), \label{spi}%
\end{equation}
and let $\left(  F_{\left\vert f\right\vert ^{p}}\right)  _{w}\,$\ be the
maximal operator defined by\footnote{Here $\ast_{w}$ are rearrangements with
respect to the measure $w(x)dx.$}%

\begin{equation}
\left(  F_{\left\vert f\right\vert ^{p}}\right)  _{w}(t)=\sup\limits_{\pi
}\left(  S_{\pi}(\left\vert f\right\vert ^{p})\right)  _{w}^{\ast}(t).
\label{nn}%
\end{equation}
Then,
\begin{equation}
K(t,f;L_{w}^{p}(\mathbb{R}^{n}),L^{\infty}(\mathbb{R}^{n}))\approx
t^{1/p}\left(  F_{\left\vert f\right\vert ^{p}}\right)  _{w}(t). \label{nn1}%
\end{equation}
Using this tool we can bypass the use of the classical maximal operator and
readily show that the reverse H\"{o}lder inequalities can be formulated as
$K-$functional inequalities of the form (\ref{Ktheta}) (cf. Section
\ref{sec:doubling}, (\ref{conpeso})), thus making available the interpolation
machinery, including the characterization of these classes of weights via
indices (cf. (\ref{rhp})).

In Section \ref{sec:Comparison} we consider other applications of our theory.
For example, the well known connection between weights that satisfy reverse
H\"{o}lder inequalities and the $A_{p}$ weights of Muckenhoupt, one of whose
manifestations is given by the equality $A_{\infty}=RH,$ which, combined with
(\ref{intro10'}), gives an index characterization of $A_{\infty},$%
\[
A_{\infty}=\{w:ind\{K(\cdot,w\chi_{Q};L^{1}(Q),L^{\infty}(Q))\}_{Q}>0\}.
\]
In Section \ref{sub:carro} we compare our result with the recent
characterization of $A_{\infty}$ obtained in \cite{ACKS} using different
indices and without use of interpolation methods. It is shown in \cite{ACKS}
that
\[
A_{\infty}=\{w:\widetilde{ind}(w)<1\},
\]
where $\widetilde{ind}(w)$ is an index introduced in \cite{ACKS} independently
from the theory of indices or interpolation theory. In Section \ref{sub:carro}%
, (\ref{acks2}), we give a direct proof of
\[
ind\{K(\cdot,w\chi_{Q};L^{1}(Q),L^{\infty}(Q))\}_{Q}>0\Leftrightarrow
\widetilde{ind}(w)<1,
\]
which clarifies the situation.

As another application of our methods, in Section \ref{sec:strom} we give a
simple proof of the important formula obtained by Stromberg-Wheeden (cf.
\cite{CN}),%
\[
A_{\infty}^{p}=RH_{p}.
\]
We have also included a small section of problems (cf. Section
\ref{sec:problems}) connected with the topics discussed in the paper.

The table of contents should serve as guide to the contents of the paper. A
few words about the bibliography are also in order. Documenting the material
discussed in the paper has resulted in a relatively large bibliography but,
alas, it was not our intention to compile a comprehensive one. We have not
attempted to cover the huge amount of material that falls outside our
development in this paper. Moreover, since interpolation methods up to this
point have not been mainstream in the theory of weighted norm inequalities,
and indeed one of the objectives of this paper is to help to try to reverse
this situation, our references tend to exhibit a distinctive vintage
character. Therefore, we apologize in advance if your favorite papers are not
quoted. We should also call attention to the fact that there is a literature
that utilizes some of the underlying technical tools that we use here but
implemented using a completely different point of view than ours. In
particular, without the use of interpolation theoretical methods we developed.
A case in point is our use of rearrangements, a technique that, indeed, goes
back to early papers on weighted norm inequalities (cf. \cite{Mu}) and has
been treated extensively by several authors (e.g. the Italian school (e.g.
\cite{Moscariello}) and others. )

\section{Background: Classes of weights and Interpolation
theory\label{sec:back}}

In this section we recall some basic definitions. Our main reference on
interpolation theory, function spaces and rearrangements will be \cite{BS}.
The references for theory of weighted norm inequalities we use are \cite{GR},
\cite{tor1}, and for Gehring's Lemma we refer to \cite{Iw}.

\subsection{Weights\label{sec:weights}}

We start by recalling the definition of $RH_{p}$, the class of weights that
satisfy a reverse H\"{o}lder inequality.

\begin{definition}
Let $1<p<\infty.$ A weight $w$ is a positive locally integrable function
defined on $R^{n}.$ We shall say that a weight $w$ belongs to the reverse
H\"{o}lder class $RH_{p},$ if there exists $C:=C(w)>0$ such that for all cubes
$Q\subset R^{n},$ we have:
\begin{equation}
\left(  \frac{1}{\left\vert Q\right\vert }\int_{Q}w\left(  x\right)
^{p}dx\right)  ^{\frac{1}{p}}\leq C\frac{1}{\left\vert Q\right\vert }\int%
_{Q}w\left(  x\right)  dx. \label{revq}%
\end{equation}
We let\footnote{By abuse of language we use the norm symbol here.}%
\begin{equation}
\left\Vert w\right\Vert _{RH_{p}}=\inf\{C:\text{(\ref{revq}) holds}\}.
\label{revn}%
\end{equation}
We shall also consider the limit class
\begin{equation}
RH=\bigcup\limits_{p>1}RH_{p}. \label{rh1}%
\end{equation}

\end{definition}

The $RH_{p}$ classes increase as $p\downarrow1,$ so one can ask whether $RH$
can be described by suitable limiting version of (\ref{revq}). In this regard
we note that, if we simply let $p=1$ in (\ref{revq}), the resulting condition
is satisfied by all weights. It turns out that the correct comparison
condition in the limiting case $p=1$ is to replace the $L^{p}(Q)$ averages by
$LLogL(Q)$ averages.

\begin{definition}
\label{def:log}(cf. \cite{FE}, \cite{FeKPi}, \cite{BMR}) We shall say that a
weight $w$ belongs to the reverse H\"{o}lder class $RH_{LLogL}$ if there
exists $C:=C(w)>0$ such that, for all cubes $Q$, we have%
\begin{equation}
\left\Vert w\right\Vert _{L(LogL)(Q,\frac{dx}{\left\vert Q\right\vert })}\leq
C\frac{1}{\left\vert Q\right\vert }\int_{Q}w\left(  x\right)  dx,
\label{revlog}%
\end{equation}
where%
\[
\left\Vert f\right\Vert _{L(LogL)(Q,\frac{dx}{\left\vert Q\right\vert })}%
=\inf\{r:\frac{1}{\left\vert Q\right\vert }\int_{Q}\frac{\left\vert
f(y)\right\vert }{r}\log(e+\frac{\left\vert f(y)\right\vert }{r})dy\leq1\},
\]
and we let%
\begin{equation}
\left\Vert w\right\Vert _{RH_{LLogL}}=\inf\{C:\text{(\ref{revlog}) holds}\}.
\label{revlogn}%
\end{equation}

\end{definition}

It is shown in the references mentioned above that (\ref{revlog}) is, indeed,
the correct condition to describe the limiting class of weights that satisfy a
reverse H\"{o}lder inequality (cf. Section \ref{RH_and_IND copy(1)}, Theorem
\ref{teo:limite} below):%
\[
RH=RH_{LLogL}.
\]

The reverse H\"{o}lder classes are connected with the Muckenhoupt $A_{p}$
classes of weights. On some occasion we shall refer to the connection between
these classes of weights, so we now briefly recall the definitions.

\begin{definition}
Let $p\in(1,\infty).$ We shall say that a weight $w$ belongs to the
(Muckenhoupt) class $A_{p},$ if there exists $C:=C(w)>0$ such that, for all
cubes $Q,$ it holds%
\begin{equation}
\left(  \dfrac{1}{\left\vert Q\right\vert }%
%TCIMACRO{\dint \limits_{Q}}%
%BeginExpansion
{\displaystyle\int\limits_{Q}}
%EndExpansion
w\left(  x\right)  dx\right)  \left(  \dfrac{1}{\left\vert Q\right\vert }%
%TCIMACRO{\dint \limits_{Q}}%
%BeginExpansion
{\displaystyle\int\limits_{Q}}
%EndExpansion
w\left(  x\right)  ^{\tfrac{-1}{p-1}}dx\right)  ^{p-1}\leq C. \label{Ap}%
\end{equation}

We shall say that $w$ belongs to $A_{1},$ if there exists $C:=C(w)>0$ such
that for every cube $Q:$
\begin{equation}
\frac{1}{\left\vert Q\right\vert }\int_{Q}w\left(  y\right)  dy\leq Cw(x),
\label{A1}%
\end{equation}
for almost every $x\in Q$. Equivalently, $w\in A_{1}$ iff there exists
$C:=C(w)>0$ such that
\begin{equation}
Mw(x)\leq Cw(x),a.e., \label{A1_bis}%
\end{equation}
where $M$ is the Hardy-Littlewood maximal operator defined by
\begin{equation}
Mw\left(  x\right)  =\sup\limits_{Q\ni x}\frac{1}{\left\vert Q\right\vert
}\int_{Q}\left\vert w\left(  s\right)  \right\vert ds. \label{maxim}%
\end{equation}
The corresponding limiting class of weights $A_{\infty}$ is defined by
\begin{equation}
A_{\infty}=\bigcup\limits_{p\geq1}A_{p}. \label{apinfty}%
\end{equation}

\end{definition}

One basic connection between the $RH_{p}$ and $A_{p}$ classes of weights is
given by the following well known limiting case identity (cf. \cite{CF})
\[
RH=A_{\infty}.
\]

Summarizing the results for the limiting classes of weights discussed, we have%
\[
RH_{LLogL}=RH=A_{\infty}.
\]

The $A_{p}$ and $RH_{p}$ classes enjoy the following well known
self-improvement property that can be described informally\footnote{In fact,
the self improvement of these classes is interconnected since, if we let
$RH_{p^{\prime}}(w(x)dx)$ denote the class of weights that belong to
$RH_{p^{\prime}}$ with respect to the measure $w(x)dx,$ then, as is well known
and easy to see, we have $w\in A_{p}\Leftrightarrow w^{-1}\in RH_{p^{\prime}%
}(w(x)dx)$ (cf. \cite{BMR} and the references therein).} as follows:%
\[
``w\in A_{p}\Rightarrow w\in A_{p-\varepsilon}\text{\textquotedblright\ and
\textquotedblleft}w\in RH_{p}\Rightarrow w\in RH_{p+\varepsilon}%
\text{\textquotedblright,}%
\]
where the \textquotedblleft$\varepsilon$\textquotedblright\ depends on $w.$ In
Section \ref{RH_and_IND copy(1)} we show how the indices introduced in this
paper give simple proofs of results in \cite{MM}, \cite{M} that in particular,
show that these \textquotedblleft open\textquotedblright\ \ or self-improving
properties admit a very simple interpretation in the abstract setting of
interpolation theory.

\subsection{Real interpolation and K-functionals\label{sec:K}}

Given a compatible pair of Banach spaces\footnote{This means that there exists
a topological vector space $V,$ such that $X_{0},X_{1}$ are continuously
embedded in $V$.}, $\vec{X}=\left(  X_{0},X_{1}\right)  ,$ the $K-functional$
of an element $w\in X_{0}+X_{1},$ is the nonnegative concave function on
$\mathbb{R}_{+}$ defined by\footnote{When there cannot be confusion we shall
simply write $K\left(  t,w\right)  $.}
\begin{equation}
K\left(  t,w;\vec{X}\right)  =\inf\limits_{w=x_{0}+x_{1}}\left\{  \left\Vert
x_{1}\right\Vert _{X_{0}}+t\left\Vert x_{0}\right\Vert _{X_{1}}\right\}  ,t>0.
\label{forK}%
\end{equation}
The interpolation spaces $\vec{X}_{\theta,q},\theta\in(0,1),1\leq q\leq
\infty,$ are defined by%
\[
\vec{X}_{\theta,q}=\{w\in X_{0}+X_{1}:\left\Vert w\right\Vert _{\vec
{X}_{\theta,q}}=\left\{  \int_{0}^{\infty}[K(s,w;\vec{X})s^{-\theta}%
]^{q}ds\right\}  ^{1/q}<\infty\},
\]
with the usual modification when $q=\infty.$

We shall say that a Banach pair $\vec{X}=\left(  X_{0},X_{1}\right)  $ is
\textquotedblleft ordered" if $X_{1}\subset X_{0},$ in which case we let
$n:=n_{X_{0},X_{1}}=\sup\limits_{f\in X_{1}}\frac{\left\Vert f\right\Vert
_{X_{0}}}{\left\Vert f\right\Vert _{X_{1}}},$ be the norm of the corresponding embedding.

\begin{definition}
Let $\vec{X}=(X_{0},X_{1})$ be an ordered pair. Then we let\footnote{Note that
for an ordered pair $\vec{X},$ the $K-$functional is constant for $t>n.$ Also
note that in general $\int_{0}^{\infty}K(s,f;\vec{X})\frac{ds}{s}<\infty$
implies $f=0.$ For more on this we refer to \cite{Go}, \cite{as}.} $\vec
{X}_{0,1}=\{x:\left\Vert x\right\Vert _{\vec{X}_{0,1}}=\int_{0}^{n}%
K(s,x;\vec{X})\frac{ds}{s}<\infty\}.$
\end{definition}

The $\vec{X}_{0,1}$ spaces appear naturally in extrapolation theory (cf.
\cite{Go}, \cite{as}). Their import for our development here comes from the following

\begin{example}
\label{ejemplomarkao}(cf. \cite{Go}) Let $Q$ be a cube on $R^{n},$ and let
$\vec{X}=(L^{1}(Q),L^{\infty}(Q)).$ Then, $(L^{1}(Q),L^{\infty}(Q))$ is an
ordered pair and%
\begin{equation}
\vec{X}_{0,1}(Q)=(L^{1}(Q),L^{\infty}(Q))_{0,1}=\{f:\int_{0}^{\left\vert
Q\right\vert }f^{\ast}(s)\log\frac{\left\vert Q\right\vert }{s}ds<\infty\}.
\label{X01_cond2}%
\end{equation}

\end{example}

\begin{proof}
Since $\left\Vert f\right\Vert _{L^{1}(Q)}\leq\left\vert Q\right\vert
\left\Vert f\right\Vert _{L^{\infty}(Q)},$ $n=\left\vert Q\right\vert ,$ and
$K(t,f;L^{1}(Q),L^{\infty}(Q))=\int_{0}^{t}f^{\ast}(s)ds=tf^{\ast\ast}(t)$
becomes constant when $t>\left\vert Q\right\vert .$ Integration by parts
yields%
\begin{align*}
\int_{0}^{n}K(s,f;L^{1}(Q),L^{\infty}(Q))\frac{ds}{s}  &  =\int_{0}%
^{\left\vert Q\right\vert }sf^{\ast\ast}(s)\frac{ds}{s}\\
&  =\int_{0}^{\left\vert Q\right\vert }f^{\ast}(s)\log\frac{\left\vert
Q\right\vert }{s}ds,
\end{align*}
as we wished to show.
\end{proof}

In this abstract context we define \textquotedblleft reverse H\"{o}lder
classes\textquotedblright\ as follows

\begin{definition}
\label{def:abajo}(cf. \cite{M}) \ Let $\theta\in(0,1),1\leq q<\infty.$ Given a
Banach pair $\vec{X},$ we let $RH_{\theta,q}(\vec{X})$ be the class of
elements $w\in X_{0}+X_{1}$ such that there exists a constant $C=C_{w}(\vec
{X})>0,$ such that
\begin{equation}
K(t,w;\vec{X}_{\theta,q},X_{1})\leq Ct\frac{K(t^{\frac{1}{1-\theta}},w;\vec
{X})}{t^{\frac{1}{1-\theta}}},\text{ for all }t>0. \label{Ktheta}%
\end{equation}
We let%
\begin{equation}
\left\Vert w\right\Vert _{RH_{\theta,q}(\vec{X})}=\inf\{C:(\ref{Ktheta})\text{
holds}\}. \label{Kthetan}%
\end{equation}
Moreover, we let%
\begin{equation}
RH(\vec{X}):=\bigcup\limits_{(\theta,q)\in(0,1)\times\lbrack1,\infty
)}RH_{\theta,q}(\vec{X}). \label{rhabs}%
\end{equation}

\end{definition}

The corresponding limiting class $RH_{0,1}$ is given by

\begin{definition}
\label{interpola}(cf. \cite{BMR}) Let $\vec{X}$ be an ordered pair. We shall
say that $w\in X_{0}$ belongs to the class $RH_{0,1}(\vec{X})$ if there exists
$C:=C_{w}(\vec{X})>0$, such that, for all $0<t<n,$ it holds
\begin{equation}
\int_{0}^{t}K(s,w;\vec{X})\frac{ds}{s}\leq CK(t,w;\vec{X}). \label{Kcero}%
\end{equation}
We let%
\[
\left\Vert w\right\Vert _{RH_{0,1}(\vec{X})}=\inf\{C:(\ref{Kcero})\text{
holds}\}.
\]

\end{definition}

\begin{remark}
It is of interest to point out the connection of (\ref{Kcero}) with a limiting
form of Holmstedt's formulae. In fact, recall that using Holmstedt's formula
we can rewrite the inequality defining $RH_{\theta,1}(\vec{X}),$
\[
K(t,w,\vec{X}_{\theta,1},X_{1})\leq Ct\frac{K(t^{\frac{1}{1-\theta}},w;\vec
{X})}{t^{\frac{1}{1-\theta}}},
\]
as%
\[
\int_{0}^{t^{\frac{1}{1-\theta}}}s^{-\theta}K(s,w;\vec{X})\frac{ds}{s}\leq
Ct\frac{K(t^{\frac{1}{1-\theta}},w,\vec{X})}{t^{\frac{1}{1-\theta}}}.
\]
But since $s^{-\theta}$ decreases, the last inequality implies%
\[
t^{\frac{-\theta}{1-\theta}}\int_{0}^{t^{\frac{1}{1-\theta}}}K(s,w;\vec
{X})\frac{ds}{s}\leq Ct\frac{K(t^{\frac{1}{1-\theta}},w,\vec{X})}{t^{\frac
{1}{1-\theta}}}%
\]
yielding%
\[
\int_{0}^{t^{\frac{1}{1-\theta}}}K(s,w;\vec{X})\frac{ds}{s}\leq CK(t^{\frac
{1}{1-\theta}},w,\vec{X}),
\]
therefore if we formally let $\theta=0$ we obtain (\ref{Kcero}).
\end{remark}

The connection between the generalized reverse H\"{o}lder inequalities and the
classical definitions that were given in Section \ref{sec:weights} is
explained in the next section.

\section{Reverse H\"{o}lder inequalities: Classical vs Interpolation
definitions\label{sec:classicalvs}}

In this section we show the precise connection between the class of weights
that satisfy the classical reverse H\"{o}lder inequalities and the
corresponding definitions provided by interpolation theory.

\begin{theorem}
(cf. \cite{M}) \label{teo:p}Let $p>1$. Then, $w\in RH_{p}$ if and only for all
cubes $Q,$ $w\chi_{Q}$, the restriction of $w$ to $Q$, belongs to
$RH_{1-1/p,p}(L^{1}(Q),L^{\infty}(Q))$, and
\[
\sup\limits_{Q}\left\Vert w\chi_{Q}\right\Vert _{RH_{1-1/p,p}(L^{1}%
(Q),L^{\infty}(Q))}<\infty.
\]

\end{theorem}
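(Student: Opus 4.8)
The plan is to translate the abstract reverse H\"older condition of Definition~\ref{def:abajo} into the concrete inequality (\ref{bene1}) via Holmstedt's formula, and then to prove the resulting equivalence in two independent steps. Since $\theta=1-1/p$ gives $1/(1-\theta)=p$ and $(L^{1}(Q),L^{\infty}(Q))_{1-1/p,p}=L^{p}(Q)$, the pair occurring in (\ref{Ktheta}) is the ordered pair $(L^{p}(Q),L^{\infty}(Q))$. Feeding Holmstedt's reiteration formula (\ref{intro3}) into the left side of (\ref{Ktheta}) and observing that its right side equals $C\,\phi_{w,Q,1/p'}(t^{p})$, one checks, with constants depending only on $p$, that ``$w\chi_{Q}\in RH_{1-1/p,p}(L^{1}(Q),L^{\infty}(Q))$ with norm $\le C$'' is equivalent to the validity of (\ref{bene1}) for $Q$ with a comparable constant. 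Using $\phi_{w,Q,1/p'}(s)=s^{-1/p'}\int_{0}^{s}(w\chi_{Q})^{*}(r)\,dr=s^{1/p}(w\chi_{Q})^{**}(s)$, inequality (\ref{bene1}) for $Q$ reads
\[ \int_{0}^{t}\big((w\chi_{Q})^{**}(s)\big)^{p}\,ds\ \le\ C\,t\,\big((w\chi_{Q})^{**}(t)\big)^{p},\qquad 0<t<|Q|, \]
so the theorem reduces to showing that $w\in RH_{p}$ if and only if this holds for every cube $Q$ with one constant $C$; that is the statement I would actually prove.

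The ``if'' direction is immediate: fixing a cube $Q$ and letting $t\uparrow|Q|$, the left side of the last display increases to a limit which is at least $\int_{0}^{|Q|}\big((w\chi_{Q})^{*}(s)\big)^{p}\,ds=\int_{Q}w^{p}$ (using $g^{**}\ge g^{*}$ and $((w\chi_{Q})^{*})^{p}=(w^{p}\chi_{Q})^{*}$), while the right side tends to $C|Q|\big(\tfrac{1}{|Q|}\int_{Q}w\big)^{p}$ because $(w\chi_{Q})^{**}(|Q|)=\tfrac{1}{|Q|}\int_{Q}w$; dividing by $|Q|$ gives (\ref{revq}) with $\|w\|_{RH_{p}}^{p}\le C$.

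For the ``only if'' direction I would exploit that $w\in RH_{p}$ is a hypothesis about \emph{all} cubes, hence about every subcube of the fixed cube $Q$. Let $M_{Q}$ be the maximal operator formed over subcubes of $Q$. Applying (\ref{revq}) to each $Q'\subseteq Q$ gives the pointwise bound $M_{Q}(w^{p})\le\|w\|_{RH_{p}}^{p}(M_{Q}w)^{p}$ on $Q$, hence, passing to decreasing rearrangements, $(M_{Q}(w^{p}))^{*}(t)\le\|w\|_{RH_{p}}^{p}\big((M_{Q}w)^{*}(t)\big)^{p}$. I would then invoke the local maximal rearrangement estimates $(M_{Q}f)^{*}(t)\approx(f\chi_{Q})^{**}(t)$ for $0<t<|Q|$, with constants depending only on $n$ (these are the ``local maximal inequalities'' used around (\ref{intro2}); cf. \cite{BS}, \cite{M}), applied to $f=w$ and $f=w^{p}$, together with the elementary Jensen bound $\big((w\chi_{Q})^{**}(t)\big)^{p}\le(w^{p}\chi_{Q})^{**}(t)$. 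Chaining these yields
\[ (w^{p}\chi_{Q})^{**}(t)\ \le\ C\,\big((w\chi_{Q})^{**}(t)\big)^{p},\qquad 0<t<|Q|,\quad C=C(n,p,\|w\|_{RH_{p}}). \]
Finally Hardy's inequality, where the assumption $p>1$ enters in an essential way (and is exactly what fails for $p=1$), gives
\[ \int_{0}^{t}\big((w\chi_{Q})^{**}(s)\big)^{p}\,ds\ \le\ (p')^{p}\!\int_{0}^{t}\big((w\chi_{Q})^{*}(s)\big)^{p}\,ds\ =\ (p')^{p}\,t\,(w^{p}\chi_{Q})^{**}(t)\ \le\ (p')^{p}C\,t\,\big((w\chi_{Q})^{**}(t)\big)^{p}, \]
which is the displayed inequality with a constant independent of $Q$; reading the constants back through Holmstedt's formula produces the uniform bound $\sup_{Q}\|w\chi_{Q}\|_{RH_{1-1/p,p}(L^{1}(Q),L^{\infty}(Q))}<\infty$.

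I expect the one genuine obstacle to lie in the ``only if'' direction, precisely in the passage from the pointwise self-improvement $M_{Q}(w^{p})\lesssim(M_{Q}w)^{p}$ to the rearrangement inequality $(w^{p}\chi_{Q})^{**}\lesssim((w\chi_{Q})^{**})^{p}$: this relies on the two-sided comparison of the local maximal function with $(f\chi_{Q})^{**}$, whose nontrivial half is a Calder\'on--Zygmund/covering argument, and on verifying that all the constants there are genuinely uniform in the cube $Q$. The Holmstedt translation, the use of Hardy's inequality, and the limiting argument in the converse are all routine.
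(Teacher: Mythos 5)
Your proposal is correct and follows essentially the same route as the paper: the nontrivial (``only if'') direction rests in both cases on applying the reverse H\"older inequality to all subcubes of $Q$ to get the pointwise self-improvement for the local maximal operator, passing to rearrangements via Herz's estimates, and reading the result as a $K$-functional inequality, while the converse is obtained in both by specializing to $t=|Q|$. The only cosmetic difference is that you route the identification through Holmstedt's formula, the rewriting $\phi_{w,Q,1/p'}(s)=s^{1/p}(w\chi_Q)^{**}(s)$, and a final Hardy-inequality step to land on the integral form (\ref{bene1}), whereas the paper's proof uses the $K$-functional formula (\ref{abs8}) for the pair $(L^p(Q),L^\infty(Q))$ to pass directly between the rearrangement inequality (\ref{abs2}) and the condition (\ref{abs3}); these are equivalent reformulations of the same computation.
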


\begin{proof}
Suppose that $w\in RH_{p},$ then for all cubes $Q,$%
\begin{equation}
\left(  \frac{1}{\left\vert Q\right\vert }\int_{Q}w\left(  x\right)
^{p}dx\right)  ^{\frac{1}{p}}\leq\left\Vert w\right\Vert _{RH_{p}}\frac
{1}{\left\vert Q\right\vert }\int_{Q}w\left(  x\right)  dx. \label{abs7}%
\end{equation}
Fix a cube $Q_{0}.$ Then, for all $x\in Q_{0}$ we have the pointwise
inequality,%
\begin{align*}
M_{p,Q_{0}}(w\chi_{Q_{0}})(x)  &  =\sup\limits_{Q\ni x,Q\subset Q_{0}}\left(
\frac{1}{\left\vert Q\right\vert }\int_{Q}(w\chi_{Q_{0}}(x))^{p}dx\right)
^{\frac{1}{p}}\\
&  \leq\left\Vert w\right\Vert _{RH_{p}}\sup\limits_{Q\ni x,Q\subset Q_{0}%
}\frac{1}{\left\vert Q\right\vert }\int_{Q}w\chi_{Q_{0}}(x)dx\\
&  =\left\Vert w\right\Vert _{RH_{p}}M_{Q_{0}}(w\chi_{Q_{0}})(x).
\end{align*}
By the well known Herz rearrangement inequalities (cf. \cite[Theorem 3.8, pag
122]{BS}) applied to $M_{Q_{0}},$ we have that, for $0<t<\left\vert
Q_{0}\right\vert $, and with absolute constants independent of $w,$ and
$Q_{0},$%
\begin{align*}
(M_{p,Q_{0}}(w\chi_{Q_{0}}))^{\ast}(t)  &  \approx\left\{  \frac{1}{t}\int%
_{0}^{t}(w\chi_{Q_{0}})^{\ast}(s)^{p}ds\right\}  ^{1/p},\text{ }\\
(M_{Q_{0}}(w\chi_{Q_{0}}))^{\ast}(t)  &  \approx\frac{1}{t}\int_{0}^{t}%
(w\chi_{Q_{0}})^{\ast}(s)ds.
\end{align*}
It follows that there exists a universal constant $C,$ independent of $w$ and
$Q_{0},$ such that, for all $0<t<\left\vert Q_{0}\right\vert ,$ we have%
\begin{equation}
\left\{  \frac{1}{t}\int_{0}^{t}(w\chi_{Q_{0}})^{\ast}(s)^{p}ds\right\}
^{1/p}\leq C\left\Vert w\right\Vert _{RH_{p}}\frac{1}{t}\int_{0}^{t}%
(w\chi_{Q_{0}})^{\ast}(s)ds.\text{ } \label{abs2}%
\end{equation}

On the other hand, since $L^{p}(Q_{0})=(L^{1}(Q_{0}),L^{\infty}(Q_{0}%
))_{1-1/p,p}$ and the $K-$ functional for the pair $(L^{p}(Q_{0}),L^{\infty
}(Q_{0})),1\leq p<\infty,$ is given by\footnote{The equivalence holds with
constants independent of $w\chi_{Q_{0}}$.}%
\begin{equation}
K(t^{1/p},w\chi_{Q_{0}};L^{p}(Q_{0}),L^{\infty}(Q_{0}))\approx\left\{
\int_{0}^{t}(w\chi_{Q_{0}})^{\ast}(s)^{p}ds\right\}  ^{1/p}, \label{abs8}%
\end{equation}
we can rewrite (\ref{abs2}) as follows: for all $0<t<\left\vert Q_{0}%
\right\vert ,$ we have%
\[
K(t^{1/p},w\chi_{Q_{0}};(L^{1}(Q_{0}),L^{\infty}(Q_{0}))_{1-1/p,p},L^{\infty
}(Q_{0}))
\]%
\begin{equation}
\leq\tilde{C}\left\Vert w\right\Vert _{RH_{p}}t^{-(1/p-1)}K(t,w\chi_{Q_{0}%
},L^{1}(Q_{0}),L^{\infty}(Q_{0})). \label{abs3}%
\end{equation}

Moreover, since the cube $Q_{0}$ was arbitrary, and the constant in
(\ref{abs3}) does not depend on $Q_{0},$ we conclude that $w\chi_{Q}\in
RH_{1-1/p,p}(L^{1}(Q),L^{\infty}(Q))$ for all cubes $Q$ and, moreover,%
\[
\sup_{Q}\left\Vert w\chi_{Q}\right\Vert _{RH_{1-1/p,p}(L^{1}(Q),L^{\infty
}(Q))}\leq\tilde{C}\left\Vert w\right\Vert _{RH_{p}},
\]
as we wished to show.

Conversely, suppose that $\sup\limits_{Q}\left\Vert w\chi_{Q}\right\Vert
_{RH_{1-1/p,p}(L^{1}(Q),L^{\infty}(Q))}<\infty.$ Fix a cube $Q_{0}$. Then, for
all $t>0$,
\begin{align*}
&  K(t^{p},w\chi_{Q_{0}};(L^{1}(Q_{0}),L^{\infty}(Q_{0}))_{1-1/p,p},L^{\infty
}(Q_{0}))\\
&  \leq\left(  \sup_{Q}\left\Vert w\chi_{Q}\right\Vert _{RH_{1-1/p,p}%
(L^{1}(Q),L^{\infty}(Q))}\right)  t^{-(1/p-1)}K(t,w\chi_{Q_{0}},L^{1}%
(Q_{0}),L^{\infty}(Q_{0})).
\end{align*}
Now, let $t=\left\vert Q_{0}\right\vert $ and use the identification
(\ref{abs8}) to obtain that for some absolute constant $\widetilde{C}$ not
depending on $Q_{0}$, it holds%
\begin{align*}
&  \left\{  \frac{1}{\left\vert Q_{0}\right\vert }\int_{0}^{\left\vert
Q_{0}\right\vert }(w\chi_{Q_{0}})^{\ast}(s)^{p}ds\right\}  ^{1/p}\\
&  \leq\tilde{C}\left(  \sup_{Q}\left\Vert w\chi_{Q}\right\Vert _{RH_{1-1/p,p}%
(L^{1}(Q),L^{\infty}(Q))}\right)  \frac{1}{\left\vert Q_{0}\right\vert }%
\int_{0}^{\left\vert Q_{0}\right\vert }(w\chi_{Q_{0}})^{\ast}(s)ds.
\end{align*}
Whence,%
\begin{align}
&  \left(  \frac{1}{\left\vert Q_{0}\right\vert }\int_{Q_{0}}w(x)^{p}%
dx\right)  ^{\frac{1}{p}}\label{abs10}\\
&  \leq\tilde{C}\left(  \sup_{Q}\left\Vert w\chi_{Q}\right\Vert _{RH_{1-1/p,p}%
(L^{1}(Q),L^{\infty}(Q))}\right)  \frac{1}{\left\vert Q_{0}\right\vert }%
\int_{Q_{0}}w\left(  x\right)  dx.\nonumber
\end{align}
Consequently, since $Q_{0}$ was arbitrary,%
\[
\left\Vert w\right\Vert _{RH_{p}}\leq\tilde{C}\left(  \sup_{Q}\left\Vert
w\chi_{Q}\right\Vert _{RH_{1-1/p,p}(L^{1}(Q),L^{\infty}(Q))}\right)
\]
as we wished to show.
\end{proof}

\begin{remark}
It follows from the proof that, with constants possibly depending
on$1<p<\infty,$ we have
\begin{equation}
\sup_{Q}\left\Vert w\chi_{Q}\right\Vert _{RH_{1-1/p,p}(L^{1}(Q),L^{\infty
}(Q))}\approx\left\Vert w\right\Vert _{RH_{p}}. \label{abs9}%
\end{equation}

\end{remark}

\subsection{The limiting case $p=1$}

In order to extend the results of the previous section to the limiting case
$p=1,$ and relate $RH_{0,1}$ to the condition provided by Definition
\ref{def:log}, we shall need to compare different norms for the space $LLogL.$
While such norm comparison results are part of the folklore, it is hard to
find references that provide a complete treatment that serves our
requirements, therefore, for the sake of completeness, we chose to provide
full details in the next lemma,

\begin{lemma}
\label{lema:previo}Suppose that $f\in LLogL_{loc}(\mathbb{R}^{n}).$ Then,

(i) For all cubes $Q$%
\begin{align*}
\frac{1}{\left\vert Q\right\vert }\int_{Q}\left\vert f(y)\right\vert
\log(e+\frac{\left\vert f(y)\right\vert }{\left\Vert f\chi_{Q}\right\Vert
_{L^{1}(Q,\frac{dx}{\left\vert Q\right\vert })}})dy  &  \leq2\left\Vert
f\right\Vert _{LLogL(Q,\frac{dx}{\left\vert Q\right\vert })}\\
&  \leq\frac{1}{\left\vert Q\right\vert }\int_{Q}\left\vert f(y)\right\vert
\log(e+\frac{\left\vert f(y)\right\vert }{\left\Vert f\chi_{Q}\right\Vert
_{L^{1}(Q,\frac{dx}{\left\vert Q\right\vert })}})dy,
\end{align*}
where $\left\Vert f\right\Vert _{L(LogL(Q,\frac{dx}{\left\vert Q\right\vert
})}$ denotes the $LLogL(Q,\frac{dx}{\left\vert Q\right\vert })$ Luxemburg norm
of $f,$%
\begin{equation}
\left\Vert f\right\Vert _{L(LogL(Q,\frac{dx}{\left\vert Q\right\vert }))}%
=\inf\{r:\frac{1}{\left\vert Q\right\vert }\int_{Q}\frac{\left\vert
f(y)\right\vert }{r}\log(e+\frac{\left\vert f(y)\right\vert }{r})dy\leq1\}.
\label{lux}%
\end{equation}

(ii) There exists an absolute constant such that for all cubes $Q$%
\begin{align*}
\frac{1}{\left\vert Q\right\vert }\int_{Q}\left\vert f(y)\right\vert
\log(e+\frac{\left\vert f(y)\right\vert }{\left\Vert f\chi_{Q}\right\Vert
_{L^{1}(Q,\frac{dx}{\left\vert Q\right\vert })}})dy  &  \leq\frac
{1}{\left\vert Q\right\vert }\int_{0}^{\left\vert Q\right\vert }(f\chi
_{Q})^{\ast}(s)\log(e+\frac{\left\vert Q\right\vert }{s})ds\\
&  \leq c\left\Vert f\chi_{Q}\right\Vert _{LLogL(Q,\frac{dx}{\left\vert
Q\right\vert })}.
\end{align*}

\end{lemma}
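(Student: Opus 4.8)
The plan is to prove the two inequalities in (i) first, then use (i) together with a rearrangement comparison to obtain (ii).

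\medskip

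\textbf{Part (i).} Write $\rho = \left\Vert f\chi_{Q}\right\Vert_{L^{1}(Q,\frac{dx}{\left\vert Q\right\vert})}$ and $L = \left\Vert f\right\Vert_{LLogL(Q,\frac{dx}{\left\vert Q\right\vert})}$, the Luxemburg norm defined in (\ref{lux}). For the \emph{upper} bound $2L \le \frac{1}{\left\vert Q\right\vert}\int_{Q}\left\vert f\right\vert\log(e+\frac{\left\vert f\right\vert}{\rho})dy$: I would show that $r = \frac12\int_Q |f|\log(e+|f|/\rho)\,\frac{dy}{|Q|}$ is an admissible competitor in the infimum defining $L$, i.e. that $\frac{1}{|Q|}\int_Q \frac{|f|}{r}\log(e+\frac{|f|}{r})\,dy \le 1$. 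The key observation is that $r \ge \frac12\rho$ (since $\log(e+\cdot)\ge 1$), so $\log(e+|f|/r) \le \log(e+2|f|/\rho) \le 2\log(e+|f|/\rho)$, using the elementary inequality $\log(e+2u)\le 2\log(e+u)$ for $u\ge 0$. Then $\frac{1}{|Q|}\int_Q \frac{|f|}{r}\log(e+\frac{|f|}{r})dy \le \frac{2}{r}\cdot\frac{1}{|Q|}\int_Q |f|\log(e+\frac{|f|}{\rho})dy = \frac{2}{r}\cdot 2r = 4$? That would give $L\le 2r$, i.e. a factor off; so I would instead take $r = \int_Q |f|\log(e+|f|/\rho)\,\frac{dy}{|Q|}$ (without the $\frac12$) and verify it is admissible, which gives $L \le r$, hence $2L \le 2r$, matching the claimed bound once one checks the arithmetic carefully — the right constant comes out by being slightly more careful with $\log(e+2u)\le 2\log(e+u)$. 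For the \emph{lower} bound $\frac{1}{|Q|}\int_Q |f|\log(e+\frac{|f|}{\rho})dy \le 2L$: since $L$ is admissible in its own infimum (the infimum is attained for Luxemburg norms of this type because the modular is continuous and the set is closed), we have $\frac{1}{|Q|}\int_Q \frac{|f|}{L}\log(e+\frac{|f|}{L})dy \le 1$, i.e. $\frac{1}{|Q|}\int_Q |f|\log(e+\frac{|f|}{L})dy \le L$. It remains to compare $\log(e+|f|/\rho)$ with $\log(e+|f|/L)$. Note $\rho = \|f\chi_Q\|_{L^1(Q,dx/|Q|)} \le \|f\|_{LLogL(Q,dx/|Q|)} = L$ (the $L^1$ norm is dominated by the $LLogL$ norm on a probability space), so $|f|/\rho \ge |f|/L$ and thus $\log(e+|f|/\rho) \ge \log(e+|f|/L)$ — this goes the \emph{wrong} way. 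Instead I would use $\log(e+|f|/\rho) \le \log(e+|f|/L) + \log(L/\rho) + \text{(something)}$... The cleaner route: use that $\log(e+ab)\le \log(e+a) + \log(e+b)$ is false, but $\log(e+|f|/\rho) \le \log((e+|f|/L)(L/\rho)\cdot\text{const})$ and then split the $\log$; integrating the resulting $\log(L/\rho)$ term against $|f|$ contributes at most $\rho\log(L/\rho)\cdot|Q|^{-1}\cdot|Q|$-type quantities that are controlled by $L$ using $t\log(A/t)\le CA$. I expect this comparison of the normalizing constants $\rho$ versus $L$ inside the logarithm to be the main fiddly point of (i).

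\medskip

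\textbf{Part (ii).} The left inequality $\frac{1}{|Q|}\int_Q |f|\log(e+\frac{|f|}{\rho})dy \le \frac{1}{|Q|}\int_0^{|Q|}(f\chi_Q)^{\ast}(s)\log(e+\frac{|Q|}{s})ds$ I would get by a rearrangement/Hardy–Littlewood argument: the function $y\mapsto |f(y)|$ and its decreasing rearrangement $(f\chi_Q)^\ast$ have the same distribution, and one checks that $\log(e+\frac{|f|}{\rho})$ is controlled, after rearranging, by $\log(e+\frac{|Q|}{s})$ — this uses the standard fact that for the decreasing rearrangement, $(f\chi_Q)^{\ast}(s) \le \frac{1}{s}\int_0^s (f\chi_Q)^\ast \le \frac{1}{s}\int_0^{|Q|}(f\chi_Q)^\ast = \frac{\rho |Q|}{s}$, hence $\frac{(f\chi_Q)^\ast(s)}{\rho}\le \frac{|Q|}{s}$ and so $\log(e+\frac{(f\chi_Q)^\ast(s)}{\rho}) \le \log(e+\frac{|Q|}{s})$; then $\int_Q|f|\log(e+|f|/\rho) = \int_0^{|Q|}(f\chi_Q)^\ast(s)\log(e+(f\chi_Q)^\ast(s)/\rho)\,ds$ by a rearrangement identity for products of $f$ with a monotone function of $f$ (Hardy–Littlewood), and one bounds the integrand. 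The right inequality $\frac{1}{|Q|}\int_0^{|Q|}(f\chi_Q)^{\ast}(s)\log(e+\frac{|Q|}{s})ds \le c\left\Vert f\chi_Q\right\Vert_{LLogL(Q,\frac{dx}{|Q|})}$ is a classical norm equivalence: the left side is (up to the normalization by $|Q|$, with the substitution $s = |Q|u$) essentially $\int_0^1 f^\ast(|Q|u)\log(e+1/u)|Q|\,du\,/|Q| = \int_0^1 (f\chi_Q)^\ast(|Q|u)\log(e+1/u)\,du$, which is one of the standard expressions for the $L\log L$ norm over a probability space, equivalent to the Luxemburg norm with an absolute constant $c$. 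I would cite \cite{BS} (the $L\log L$ norm equivalences there, e.g. the fundamental function / rearrangement characterization) for this, or give the short computation scaling via (\ref{X01_cond2}) from Example \ref{ejemplomarkao}, which already identifies $\int_0^{|Q|}f^\ast(s)\log\frac{|Q|}{s}ds$ as the relevant extrapolation-space functional. The main obstacle overall is keeping the additive $\log$-splittings under control so that the stray terms involving $\log(\|f\|_{LLogL}/\|f\|_{L^1})$ are absorbed; the rearrangement steps themselves are routine.
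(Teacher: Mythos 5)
Your overall architecture is right, but Part (i) has a genuine gap, and part of your confusion there traces back to a typo in the statement as printed. As written, the chain in (i) reads $A\le 2L\le A$ (with $A=\frac{1}{|Q|}\int_Q|f|\log(e+|f|/\rho)$, $\rho=\|f\chi_Q\|_{L^1(Q,dx/|Q|)}$, $L=\|f\|_{LLogL(Q,dx/|Q|)}$), which would force $A=2L$. What the paper actually proves, and what is true, is the two-sided bound $L\le A\le 2L$. Your attempt to establish $2L\le A$ is therefore chasing a false claim (e.g.\ for $f\equiv$ const one has $A\approx 1.31f$, $L\approx 1.25f$, so $2L>A$); the competitor argument you give with $r=A$ correctly yields $L\le A$ and you should stop there — the factor $2$ belongs only on the other side. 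The competitor verification itself is fine: since $r=A\ge\rho$, one has $\log(e+|f|/r)\le\log(e+|f|/\rho)$ pointwise, so $\frac{1}{|Q|}\int_Q\frac{|f|}{r}\log(e+\frac{|f|}{r})\le\frac{1}{r}A=1$. (The paper's own route to $L\le A$ is even shorter: by $\Delta_2$ the modular identity $\frac{1}{|Q|}\int_Q|f|\log(e+|f|/L)=L$ holds, and $\rho\le L$ gives $\log(e+|f|/L)\le\log(e+|f|/\rho)$ pointwise.)

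The real gap is in your proof of $A\le 2L$: you circle the right idea (split the logarithm) but never state the inequality that makes it work, and you even remark that the obvious candidates fail. The mechanism is elementary: for $\alpha\ge 1$ and $u\ge 0$ one has
\begin{equation*}
e+u\alpha\le e\alpha+u\alpha=(e+u)\alpha,\qquad\text{hence}\qquad \log\Bigl(e+u\alpha\Bigr)\le\log(e+u)+\log\alpha.
\end{equation*}
Apply this with $u=|f(y)|/L$ and $\alpha=L/\rho\ge 1$ (which is $\ge 1$ precisely because $\rho\le L$, a fact you already noted). Integrating against $|f|/|Q|$ then gives
\begin{equation*}
A\le\frac{1}{|Q|}\int_Q|f|\log\Bigl(e+\frac{|f|}{L}\Bigr)+\log\Bigl(\frac{L}{\rho}\Bigr)\cdot\frac{1}{|Q|}\int_Q|f|=L+\rho\log\Bigl(\frac{L}{\rho}\Bigr)\le 2L,
\end{equation*}
where the first term is exactly $L$ by the $\Delta_2$ modular identity and the second is bounded by $L$ because $\log x\le x$ (take $x=L/\rho$). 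Your ``$+\text{(something)}$'' is simply zero; the clean statement above is what you were missing. I would also insist on the $\Delta_2$ attainment (equality, not just $\le 1$) for the modular, since it is what lets you write $\frac{1}{|Q|}\int_Q|f|\log(e+|f|/L)=L$ with no slack.

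Part (ii) is essentially fine. Your left inequality is exactly the paper's argument: equimeasurability plus monotonicity of $t\mapsto t\log(e+t/\rho)$ turns $\int_Q|f|\log(e+|f|/\rho)dy$ into $\int_0^{|Q|}(f\chi_Q)^*(s)\log(e+(f\chi_Q)^*(s)/\rho)ds$, and then $(f\chi_Q)^*(s)\le\rho|Q|/s$ gives the bound. For the right inequality, the paper gives a self-contained computation, splitting $(0,|Q|)$ at the set $\Omega=\{s:(e|Q|/s)^{1/2}\le(f\chi_Q)^*(s)/L\}$, bounding the $\Omega$-part by $cL$ via the modular identity and the complement by $cL$ via $\int_0^1(e/u)^{1/2}\log(e/u)\,du<\infty$; your plan to cite the standard equivalence of the Luxemburg norm with the rearrangement functional $\int_0^{|Q|}f^*(s)\log(e+|Q|/s)\,ds/|Q|$ is a legitimate shortcut and is indeed the content of the computation, but if you want a self-contained proof you should reproduce the splitting. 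Either route is acceptable.
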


\begin{proof}
\textbf{(i) }Since the Young's function $y\log(e+y)$ satisfies the $\Delta
_{2}$ condition, the infimum in (\ref{lux}) is attained, and we have
\begin{equation}
\frac{1}{\left\vert Q\right\vert }\int_{Q}\left\vert f(y)\right\vert
\log(e+\frac{\left\vert f(y)\right\vert }{\left\Vert f\chi_{Q}\right\Vert
_{LLogL(Q,\frac{dx}{\left\vert Q\right\vert })}})dy=\left\Vert f\right\Vert
_{LLogL(Q,\frac{dx}{\left\vert Q\right\vert })}.\label{lux2}%
\end{equation}
In particular, since $\log(e+\frac{\left\vert f(y)\right\vert }{\left\Vert
f\chi_{Q}\right\Vert _{L(LogL(Q,\frac{dx}{\left\vert Q\right\vert })}})\geq1,$
we recover the well known fact that%
\begin{equation}
\left\Vert f\chi_{Q}\right\Vert _{L^{1}(Q,\frac{dx}{\left\vert Q\right\vert
})}=\frac{1}{\left\vert Q\right\vert }\int_{Q}\left\vert f(y)\right\vert
dy\leq\left\Vert f\right\Vert _{LLogL(Q,\frac{dx}{\left\vert Q\right\vert }%
)}.\label{lux1}%
\end{equation}
By (\ref{lux1}), $\frac{\left\Vert f\right\Vert _{L(LogL(Q,\frac
{dx}{\left\vert Q\right\vert })}}{\left\Vert f\chi_{Q}\right\Vert
_{L^{1}(Q,\frac{dx}{\left\vert Q\right\vert })}}\geq1$ and therefore we can
write,%
\begin{align*}
e+\frac{\left\vert f(y)\right\vert }{\left\Vert f\chi_{Q}\right\Vert
_{L^{1}(Q,\frac{dx}{\left\vert Q\right\vert })}} &  =e+\frac{\left\vert
f(y)\right\vert }{\left\Vert f\chi_{Q}\right\Vert _{L^{1}(Q,\frac
{dx}{\left\vert Q\right\vert })}}\frac{\left\Vert f\chi_{Q}\right\Vert
_{LLogL(Q,\frac{dx}{\left\vert Q\right\vert })}}{\left\Vert f\chi
_{Q}\right\Vert _{LLogL(Q,\frac{dx}{\left\vert Q\right\vert })}}\\
&  \leq(e+\frac{\left\vert f(y)\right\vert }{\left\Vert f\chi_{Q}\right\Vert
_{LLogL(Q,\frac{dx}{\left\vert Q\right\vert })}})\frac{\left\Vert f\chi
_{Q}\right\Vert _{LLogL(Q,\frac{dx}{\left\vert Q\right\vert })}}{\left\Vert
f\chi_{Q}\right\Vert _{L^{1}(Q,\frac{dx}{\left\vert Q\right\vert })}}.
\end{align*}
Consequently,%
\begin{align*}
&  \frac{1}{\left\vert Q\right\vert }\int_{Q}\left\vert f(y)\right\vert
\log(e+\frac{\left\vert f(y)\right\vert }{\left\Vert f\chi_{Q}\right\Vert
_{L^{1}(Q,\frac{dx}{\left\vert Q\right\vert })}})dy\\
&  \leq\frac{1}{\left\vert Q\right\vert }\int_{Q}\left\vert f(y)\right\vert
\log\left\{  (e+\frac{\left\vert f(y)\right\vert }{\left\Vert f\right\Vert
_{LLogL(Q,\frac{dx}{\left\vert Q\right\vert })}})\frac{\left\Vert f\right\Vert
_{LLogL(Q,\frac{dx}{\left\vert Q\right\vert })}}{\left\Vert f\chi
_{Q}\right\Vert _{L^{1}(Q,\frac{dx}{\left\vert Q\right\vert })}}\right\}  dy\\
&  =(I)+(II),
\end{align*}
where%
\begin{align*}
(I) &  =\frac{1}{\left\vert Q\right\vert }\int_{Q}\left\vert f(y)\right\vert
\log\left(  e+\frac{\left\vert f(y)\right\vert }{\left\Vert f\chi
_{Q}\right\Vert _{LLogL(Q,\frac{dx}{\left\vert Q\right\vert })}}\right)
dy=\left\Vert f\chi_{Q}\right\Vert _{LLogL(Q,\frac{dx}{\left\vert Q\right\vert
})}\text{ (by (\ref{lux2}))}\\
(II) &  =\log\left(  \frac{\left\Vert f\chi_{Q}\right\Vert _{LLogL(Q,\frac
{dx}{\left\vert Q\right\vert })}}{\left\Vert f\chi_{Q}\right\Vert
_{L^{1}(Q,\frac{dx}{\left\vert Q\right\vert })}}\right)  \frac{1}{\left\vert
Q\right\vert }\int_{Q}\left\vert f(y)\right\vert dy\leq\left\Vert f\chi
_{Q}\right\Vert _{LLogL(Q,\frac{dx}{\left\vert Q\right\vert })}\text{.}%
\end{align*}
Therefore, we have shown that%
\[
\frac{1}{\left\vert Q\right\vert }\int_{Q}\left\vert f(y)\right\vert
\log(e+\frac{\left\vert f(y)\right\vert }{\left\Vert f\chi_{Q}\right\Vert
_{L^{1}(Q,\frac{dx}{\left\vert Q\right\vert })}})dy\leq2\left\Vert f\chi
_{Q}\right\Vert _{LLogL(Q,\frac{dx}{\left\vert Q\right\vert })}.
\]
On the other hand, using successively (\ref{lux2}) and (\ref{lux1}), we
obtain
\begin{align*}
\left\Vert f\right\Vert _{LLogL(Q,\frac{dx}{\left\vert Q\right\vert })} &
=\frac{1}{\left\vert Q\right\vert }\int_{Q}\left\vert f(y)\right\vert
\log(e+\frac{\left\vert f(y)\right\vert }{\left\Vert f\right\Vert
_{LLogL(Q,\frac{dx}{\left\vert Q\right\vert })}})dy\\
&  \leq\frac{1}{\left\vert Q\right\vert }\int_{Q}\left\vert f(y)\right\vert
\log\left(  e+\frac{\left\vert f(y)\right\vert }{\left\Vert f\chi
_{Q}\right\Vert _{L^{1}(Q,\frac{dx}{\left\vert Q\right\vert })}}\right)  dy.
\end{align*}
(ii) By the definition of rearrangement,%
\begin{align}
\frac{1}{\left\vert Q\right\vert }\int_{Q}\left\vert f(y)\right\vert
\log{\small (e+}\frac{\left\vert f(y)\right\vert }{\left\Vert f\chi
_{Q}\right\Vert _{L^{1}(Q,\frac{dx}{\left\vert Q\right\vert })}}{\small )dy}
&  {\small =}\label{lux4}\\
&  \frac{1}{\left\vert Q\right\vert }\int_{0}^{\left\vert Q\right\vert
}{\small (f\chi}_{Q}{\small )}^{\ast}{\small (s)}\log{\small (e+}\frac
{(f\chi_{Q})^{\ast}(s)}{\left\Vert f\chi_{Q}\right\Vert _{L^{1}(Q,\frac
{dx}{\left\vert Q\right\vert })}}{\small )dy.}\nonumber
\end{align}
Now, since $(f\chi_{Q})^{\ast}(u)$ is decreasing, we have that, for all
$0<s<\left\vert Q\right\vert ,$
\begin{align*}
(f\chi_{Q})^{\ast}(s) &  \leq\frac{1}{s}\int_{0}^{\left\vert Q\right\vert
}(f\chi_{Q})^{\ast}(u)du\\
&  =\frac{1}{s}\left\vert Q\right\vert \left\Vert f\chi_{Q}\right\Vert
_{L^{1}(Q,\frac{dx}{\left\vert Q\right\vert })}.
\end{align*}
Inserting this information in (\ref{lux4}) we see that
\[
\frac{1}{\left\vert Q\right\vert }\int_{Q}\left\vert f(y)\right\vert
\log(e+\frac{\left\vert f(y)\right\vert }{\left\Vert f\chi_{Q}\right\Vert
_{L^{1}(Q,\frac{dx}{\left\vert Q\right\vert })}})dy\leq\frac{1}{\left\vert
Q\right\vert }\int_{0}^{\left\vert Q\right\vert }(f\chi_{Q})^{\ast}%
(s)\log(e+\frac{\left\vert Q\right\vert }{s})ds.
\]
Let $\Omega=\{s\in(0,\left\vert Q\right\vert ):\left(  \frac{e\left\vert
Q\right\vert }{s}\right)  ^{1/2}\leq\frac{(f\chi_{Q})^{\ast}(s)}{\left\Vert
f\right\Vert _{LLogL(Q,\frac{dx}{\left\vert Q\right\vert })}}\}$, then we see
that, with absolute constants, we have%
\begin{align*}
\frac{1}{\left\vert Q\right\vert }\int_{0}^{\left\vert Q\right\vert }%
(f\chi_{Q})^{\ast}(s)\log(e+\frac{\left\vert Q\right\vert }{s})ds &
\approx\frac{c}{\left\vert Q\right\vert }\int_{0}^{\left\vert Q\right\vert
}(f\chi_{Q})^{\ast}(s)\log(\frac{e\left\vert Q\right\vert }{s})ds\\
&  =\frac{c}{\left\vert Q\right\vert }\int_{\Omega}(f\chi_{Q})^{\ast}%
(s)\log(\frac{e\left\vert Q\right\vert }{s})ds\\
&  +\frac{c}{\left\vert Q\right\vert }\int_{(0.\left\vert Q\right\vert
)\backslash\Omega}(f\chi_{Q})^{\ast}(s)\log(\frac{e\left\vert Q\right\vert
}{s})ds\\
&  =(I)+(II).
\end{align*}
To estimate $(I)$ we proceed as follows\footnote{Where $c$ indicates an
absolute constant whose value may change from line to line.}%
\begin{align*}
(I) &  \leq\frac{c}{\left\vert Q\right\vert }\int_{\Omega}(f\chi_{Q})^{\ast
}(s)\log(\frac{(f\chi_{Q})^{\ast}(s)}{\left\Vert f\chi_{Q}\right\Vert
_{LLogL(Q,\frac{dx}{\left\vert Q\right\vert })}})ds\\
&  \leq\frac{c}{\left\vert Q\right\vert }\int_{0}^{\left\vert Q\right\vert
}(f\chi_{Q})^{\ast}(s)\log(\frac{(f\chi_{Q})^{\ast}(s)}{\left\Vert f\chi
_{Q}\right\Vert _{LLogL(Q,\frac{dx}{\left\vert Q\right\vert })}})ds\\
&  =\frac{c}{\left\vert Q\right\vert }\int_{Q}\left\vert f(y)\right\vert
\log(\frac{\left\vert f(y)\right\vert }{\left\Vert f\chi_{Q}\right\Vert
_{LLogL(Q,\frac{dx}{\left\vert Q\right\vert })}})dy\\
&  \leq\frac{c}{\left\vert Q\right\vert }\int_{Q}\left\vert f(y)\right\vert
\log(e+\frac{\left\vert f(y)\right\vert }{\left\Vert f\chi_{Q}\right\Vert
_{LLogL(Q,\frac{dx}{\left\vert Q\right\vert })}})dy\\
&  =c\left\Vert f\chi_{Q}\right\Vert _{LLogL(Q,\frac{dx}{\left\vert
Q\right\vert })}.
\end{align*}
Likewise,%
\begin{align*}
(II) &  =\frac{c}{\left\vert Q\right\vert }\left\Vert f\chi_{Q}\right\Vert
_{LLogL(Q,\frac{dx}{\left\vert Q\right\vert })}\int_{(0.\left\vert
Q\right\vert )\backslash\Omega}\frac{(f\chi_{Q})^{\ast}(s)}{\left\Vert
f\chi_{Q}\right\Vert _{LLogL(Q,\frac{dx}{\left\vert Q\right\vert })}}%
\log(\frac{e\left\vert Q\right\vert }{s})ds\\
&  \leq\frac{c}{\left\vert Q\right\vert }\left\Vert f\chi_{Q}\right\Vert
_{LLogL(Q,\frac{dx}{\left\vert Q\right\vert })}\int_{0}^{\left\vert
Q\right\vert }\left(  \frac{e\left\vert Q\right\vert }{s}\right)  ^{1/2}%
\log(\frac{e\left\vert Q\right\vert }{s})ds\\
&  =c\left\Vert f\chi_{Q}\right\Vert _{LLogL(Q,\frac{dx}{\left\vert
Q\right\vert })}\int_{0}^{1}\left(  \frac{e}{u}\right)  ^{1/2}\log(\frac{e}%
{u})du\\
&  =\tilde{c}\left\Vert f\chi_{Q}\right\Vert _{LLogL(Q,\frac{dx}{\left\vert
Q\right\vert })}.
\end{align*}

\end{proof}

Now we can state the version of Theorem \ref{teo:p} that corresponds to the
case $p=1$.

\begin{theorem}
\label{teo:limite}$w\in RH_{L(LogL)}$ if and only for all cubes $Q,w\chi_{Q},$
the restriction of $w$ to the cube $Q,$ belongs to $RH_{0,1}(L^{1}%
(Q),L^{\infty}(Q))$ and%
\[
\sup_{Q}\left\Vert w\chi_{Q}\right\Vert _{RH_{0,1}(L^{1}(Q),L^{\infty}%
(Q))}\approx\left\Vert w\right\Vert _{RH_{LLogL}}.
\]

\end{theorem}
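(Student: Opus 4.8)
The plan is to run the same machinery as in the proof of Theorem \ref{teo:p}, but in the limiting case $p=1$, replacing the Herz rearrangement inequality for $M_{p,Q_0}$ with the characterization of $RH_{0,1}$ via the $K$-functional $K(t,w\chi_Q;L^1(Q),L^\infty(Q))=\int_0^t(w\chi_Q)^\ast(s)\,ds$, and replacing the $L^p$ average by the $LLogL$ average. The crucial algebraic identity in the background is Example \ref{ejemplomarkao}: for an ordered pair $(L^1(Q),L^\infty(Q))$ one has $n=|Q|$ and
\[
\int_0^{|Q|}K(s,f;L^1(Q),L^\infty(Q))\frac{ds}{s}=\int_0^{|Q|}f^\ast(s)\log\frac{|Q|}{s}\,ds .
\]
So the defining inequality for $w\chi_Q\in RH_{0,1}(L^1(Q),L^\infty(Q))$, evaluated at $t=|Q|$ and using that $K(|Q|,w\chi_Q;L^1(Q),L^\infty(Q))=\int_0^{|Q|}(w\chi_Q)^\ast(s)\,ds=|Q|\,\|w\chi_Q\|_{L^1(Q,dx/|Q|)}$, reads
\[
\int_0^{|Q|}(w\chi_Q)^\ast(s)\log\frac{|Q|}{s}\,ds\le C\,|Q|\,\frac{1}{|Q|}\int_Q w(x)\,dx .
\]
Dividing by $|Q|$ and noting $\log\frac{|Q|}{s}\approx\log(e+\frac{|Q|}{s})$ up to absolute constants on $(0,|Q|)$, Lemma \ref{lema:previo} translates the left-hand side into $\approx\|w\chi_Q\|_{LLogL(Q,dx/|Q|)}$. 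This already gives one direction: $\sup_Q\|w\chi_Q\|_{RH_{0,1}}<\infty$ implies (\ref{revlog}) with a comparable constant, hence $w\in RH_{LLogL}$.

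For the converse, assume $w\in RH_{LLogL}$. I would like to deduce the differential-type inequality (\ref{Kcero}) \emph{for all} $0<t<|Q|$, not just at the endpoint $t=|Q|$. The point is that $RH_{LLogL}$ is self-improving in the sense that (\ref{revlog}) holds not only on $Q$ but on every subcube $Q'\subset Q$, with the same constant $\|w\|_{RH_{LLogL}}$; thus one applies the endpoint argument above on a sub-cube $Q'\subset Q_0$ with $|Q'|=t$ and takes a supremum over such $Q'\ni x$, exactly mirroring how the pointwise maximal inequality $M_{p,Q_0}(w\chi_{Q_0})(x)\le\|w\|_{RH_p}M_{Q_0}(w\chi_{Q_0})(x)$ was used in Theorem \ref{teo:p}. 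Concretely, from (\ref{revlog}) on subcubes one gets, for $0<t<|Q_0|$,
\[
\sup_{Q'\subset Q_0,\,|Q'|\le t}\ \frac{1}{|Q'|}\int_0^{|Q'|}(w\chi_{Q'})^\ast(s)\log\frac{|Q'|}{s}\,ds\ \le\ c\,\|w\|_{RH_{LLogL}}\ \sup_{Q'\ni x,\,Q'\subset Q_0}\frac{1}{|Q'|}\int_{Q'}w ,
\]
and then Herz-type rearrangement inequalities for the relevant maximal operators (the $LLogL$-averaging maximal operator on the left, the Hardy--Littlewood maximal operator $M_{Q_0}$ on the right, as in \cite[Theorem 3.8]{BS}) convert this into
\[
\frac{1}{t}\int_0^{t}(w\chi_{Q_0})^\ast(s)\log\frac{t}{s}\,ds\le C\,\|w\|_{RH_{LLogL}}\,\frac{1}{t}\int_0^t(w\chi_{Q_0})^\ast(s)\,ds,\qquad 0<t<|Q_0|,
\]
which, after multiplying by $t$ and invoking the integration-by-parts identity from Example \ref{ejemplomarkao} localized to scale $t$ (i.e. $\int_0^t(w\chi_{Q_0})^\ast(s)\log\frac{t}{s}\,ds=\int_0^t K(s,w\chi_{Q_0};L^1,L^\infty)\frac{ds}{s}$), is precisely (\ref{Kcero}) for $w\chi_{Q_0}$ on the pair $(L^1(Q_0),L^\infty(Q_0))$, with constant $\le C\|w\|_{RH_{LLogL}}$. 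Since $Q_0$ is arbitrary and $C$ is absolute, $\sup_Q\|w\chi_Q\|_{RH_{0,1}(L^1(Q),L^\infty(Q))}\le C\|w\|_{RH_{LLogL}}$.

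The main obstacle I anticipate is the careful handling of the logarithmic weight $\log(e+|Q'|/s)$ versus $\log(|Q'|/s)$ and, more importantly, making the reduction to subcubes rigorous: one must verify that the $LLogL$ reverse-Hölder condition on $Q_0$ genuinely passes to all subcubes with the same constant (it does, since (\ref{revlog}) is quantified over all cubes), and that the two maximal operators appearing — a "generalized average" maximal operator built from the rearrangement functional $\frac{1}{t}\int_0^t g^\ast(s)\log\frac{t}{s}\,ds$ and the ordinary $M_{Q_0}$ — both satisfy Herz-type equivalences $(\cdot)^\ast(t)\approx(\text{local average at scale }t)$. The $L^1$-side is classical; the $LLogL$-side equivalence is exactly the content of Lemma \ref{lema:previo}(ii) combined with the concavity/monotonicity of $s\mapsto\log(e+|Q|/s)$, so the ingredients are all in place. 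Once both directions are assembled one reads off the two-sided estimate $\sup_Q\|w\chi_Q\|_{RH_{0,1}(L^1(Q),L^\infty(Q))}\approx\|w\|_{RH_{LLogL}}$, completing the proof.
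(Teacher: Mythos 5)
The direction you handle first, $\sup_Q\|w\chi_Q\|_{RH_{0,1}}<\infty\Rightarrow w\in RH_{LLogL}$, is correct and is essentially the paper's argument: evaluate (\ref{Kcero}) at $t=|Q|$, use the integration-by-parts identity of Example \ref{ejemplomarkao}, pass from $\log\frac{|Q|}{s}$ to $\log(e+\frac{|Q|}{s})$ via the crude bound $\log(e+u)\le\log u+e$ for $u\ge1$, and invoke Lemma \ref{lema:previo}.

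The other direction, however, has a genuine gap. You reduce to the pointwise inequality $M_{LLogL,Q_0}(w\chi_{Q_0})(x)\lesssim\|w\|_{RH_{LLogL}}M_{Q_0}(w\chi_{Q_0})(x)$ (this much is also the paper's first step), and then assert that the rearrangement of the left-hand side controls $\frac{1}{t}\int_0^t(w\chi_{Q_0})^\ast(s)\log\frac{t}{s}\,ds$ via a ``Herz-type estimate for the $LLogL$-averaging maximal operator,'' citing Lemma \ref{lema:previo}(ii) as supplying the needed ingredient. That lemma, though, is a norm comparison between the $LLogL$ Luxemburg norm and an integral of the decreasing rearrangement on a single cube; it is \emph{not} a rearrangement estimate of Herz type for the maximal operator $M_{LLogL,Q_0}$, and no such estimate is stated or proved anywhere in your argument. (The lower bound $\bigl(M_{LLogL,Q_0}f\bigr)^\ast(t)\gtrsim\frac{1}{t}\int_0^t f^{\ast\ast}(s)\,ds$ is true, but it requires either a Calder\'on--Zygmund covering argument in the style of \cite{BMR2}, or precisely the route the paper takes.) The paper sidesteps the issue entirely: it invokes P\'erez's pointwise estimate $M_{Q_0}(M_{Q_0}f)\le C\,M_{LLogL,Q_0}f$, combines it with the pointwise reverse-H\"older maximal inequality to get $M_{Q_0}(M_{Q_0}(w\chi_{Q_0}))\lesssim\|w\|_{RH_{LLogL}}M_{Q_0}(w\chi_{Q_0})$, and then only needs the \emph{classical} Herz estimate $(M_{Q_0}g)^\ast(t)\approx g^{\ast\ast}(t)$, applied twice, to reach the $K$-functional form of (\ref{Kcero}). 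You should either prove the $LLogL$-maximal rearrangement estimate you invoke, or adopt the P\'erez iterated-maximal-function step. A secondary, repairable issue: your displayed inequality takes a supremum over all small subcubes of $Q_0$ on the left (independent of $x$) while retaining the $x$-dependent supremum on the right; as written that is not a pointwise maximal-function comparison and should be rewritten with the constraint $Q'\ni x$ on both sides before any rearrangement is taken.
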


\begin{proof}
For a fixed cube $Q_{0},$ we let
\[
M_{L(LogL),Q_{0}}(w\chi_{Q_{0}})(x)=\sup_{x\varepsilon Q\subset Q_{0}%
}\left\Vert w\chi_{Q_{0}}\right\Vert _{LLogL(\frac{dx}{\left\vert Q\right\vert
})(Q)}%
\]
Suppose that $w\in RH_{LLogL},$ then for $x\in Q_{0},$%
\[
M_{L(LogL),Q_{0}}(w\chi_{Q_{0}})(x)\leq\left\Vert w\right\Vert _{RH_{LLogL}%
}M_{Q_{0}}(w\chi_{Q_{0}})(x)
\]
Combining the previous estimate with the localized version of Perez's estimate
for the iterated maximal operator (cf. \cite[(13) page 174]{Perez})%
\[
M_{Q_{0}}(M_{Q_{0}}(w\chi_{Q_{0}}))(x)\leq CM_{L(LogL),Q_{0}}(w\chi_{Q_{0}%
})(x)
\]
yields%
\begin{equation}
M_{Q_{0}}(M_{Q_{0}}(w\chi_{Q_{0}}))(x)\leq C\left\Vert w\right\Vert
_{RH_{LLogL}}M_{Q_{0}}(w\chi_{Q_{0}})(x),a.e.\text{ on }Q_{0}.
\label{Mw_en_A1}%
\end{equation}
Taking rearrangements and using Herz's estimate for the maximal function we
see that, for $0<t<\left\vert Q_{0}\right\vert ,$ we have%
\begin{align*}
\frac{1}{t}\int_{0}^{t}(w\chi_{Q_{0}})^{\ast\ast}(s)ds  &  \approx\frac{1}%
{t}\int_{0}^{t}(M_{Q_{0}}(w\chi_{Q_{0}}))^{\ast}(s)ds\\
&  \approx(M_{Q_{0}}(M_{Q_{0}}(w\chi_{Q_{0}})))^{\ast}(t)\\
&  \leq C\left\Vert w\right\Vert _{RH_{LLogL}}(M_{Q_{0}}(w\chi_{Q_{0}}%
))^{\ast}(t)\\
&  \approx C\left\Vert w\right\Vert _{RH_{LLogL}}\frac{1}{t}\int_{0}^{t}%
(w\chi_{Q_{0}})^{\ast}(s)ds.
\end{align*}
In terms of $K-$functionals we therefore have that, for $0<t<\left\vert
Q_{0}\right\vert ,$%
\begin{equation}
\int_{0}^{t}K(s,w\chi_{Q_{0}};L^{1}(Q_{0}),L^{\infty}(Q_{0}))\frac{ds}{s}\leq
C\left\Vert w\right\Vert _{RH_{LLogL}}K(t,w\chi_{Q_{0}};L^{1}(Q_{0}%
),L^{\infty}(Q_{0})), \label{denuevo}%
\end{equation}
where $C$ is a universal constant. It follows from (\ref{Kcero})), that for
all cubes $Q,w\chi_{Q}\in RH_{0,1}(L^{1}(Q),L^{\infty}(Q)),$ and, moreover,
\[
\sup_{Q}\left\Vert w\chi_{Q}\right\Vert _{RH_{0,1}(L^{1}(Q),L^{\infty}%
(Q))}\leq C\left\Vert w\right\Vert _{RH_{LLogL}}.
\]
Conversely, suppose that for all cubes $Q,$ $w\chi_{Q}\in RH_{0,1}%
(L^{1}(Q),L^{\infty}(Q)),$ with $\sup_{Q}\left\Vert w\chi_{Q}\right\Vert
_{RH_{0,1}(L^{1}(Q),L^{\infty}(Q))}<\infty.$ Therefore, by (\ref{Kcero}), for
any cube $Q_{0}$, it holds%
\begin{align*}
\int_{0}^{t}K(s,w\chi_{Q_{0}};L^{1}(Q_{0}),L^{\infty}(Q_{0}))\frac{ds}{s}  &
\leq\\
&  C\left(  \sup_{Q}\left\Vert w\chi_{Q}\right\Vert _{RH_{0,1}(L^{1}%
(Q),L^{\infty}(Q))}\right)  K(t,w\chi_{Q_{0}};L^{1}(Q_{0}),L^{\infty}(Q_{0})).
\end{align*}
Let $t=\left\vert Q_{0}\right\vert .$ Then, using Example \ref{ejemplomarkao},
we obtain%
\begin{align}
\frac{1}{\left\vert Q_{0}\right\vert }\int_{0}^{\left\vert Q\right\vert
}(w\chi_{Q_{0}})^{\ast}(s)\log\frac{\left\vert Q_{0}\right\vert }{s}ds  &
\leq\nonumber\\
&  C\left(  \sup_{Q}\left\Vert w\chi_{Q}\right\Vert _{RH_{0,1}(L^{1}%
(Q),L^{\infty}(Q))}\right)  \frac{\left\Vert w\chi_{Q_{0}}\right\Vert
_{L^{1}(Q_{0})}}{\left\vert Q_{0}\right\vert }. \label{encontrada}%
\end{align}
Combining with Lemma \ref{lema:previo} it follows, that for all cubes $Q_{0},$%
\[
\left\Vert w\chi_{Q_{0}}\right\Vert _{LLogL(\frac{dx}{\left\vert
Q_{0}\right\vert },Q_{0})}\leq C\left(  \sup_{Q}\left\Vert w\chi
_{Q}\right\Vert _{RH_{0,1}(L^{1}(Q),L^{\infty}(Q))}\right)  \frac{\left\Vert
w\chi_{Q_{0}}\right\Vert _{L^{1}(Q_{0})}}{\left\vert Q_{0}\right\vert }.
\]
Consequently, $w\in RH_{LLogL},$ and
\[
\left\Vert w\right\Vert _{RH_{LLogL}}\leq C\left(  \sup_{Q}\left\Vert
w\chi_{Q}\right\Vert _{RH_{0,1}(L^{1}(Q),L^{\infty}(Q))}\right)  ,
\]
as we wished to show.
\end{proof}

\section{Reverse H\"{o}lder classes and Indices\label{RH_and_IND copy(1)}}

Interpolation theory reduces some of the basic issues around reverse
H\"{o}lder inequalities to the control of simple integrals. Although the
results in this section can be easily extended to a more general context we
will focus our development on the specific needs of this paper. So we shall
consider families of functions indexed by cubes that are constructed as
follows. For each cube $Q$ we associate a function $\phi_{Q}$ defined on
$(0,\left\vert Q\right\vert ).$ We assume that the functions $\phi_{Q}$ are
continuous, increasing and such that $\phi_{Q}(s)/s$ decreases. Let $\beta
\in\lbrack0,1),$ $q\geq1,$ we let $\phi_{Q,\beta}(s)=$ $s^{-\beta}\phi
_{Q}(s),$ and $\phi_{Q,\beta,q}(s)=[s^{-\beta}\phi_{Q}(s)]^{q};$ in
particular, $\phi_{Q,0,1}(s)=\phi_{Q}(s),$ and $\phi_{Q,\beta,1}%
(s)=\phi_{Q,\beta}(s).$ The prototype examples are constructed using the
functions $\phi_{Q}(s)$ of the form $\phi_{w,Q}(s)=K(s,w\chi_{Q}%
,L^{1}(Q),L^{\infty}(Q)),$ and their multiparameter versions $\phi
_{w,Q,\beta,q}(s)=[s^{-\beta}\phi_{w,Q}(s)]^{q},$ where $w$ is a given
weight$.$

The elementary techniques we use to estimate the integrals involving such
functions are displayed in the next Lemma. We note parenthetically (cf. part
(iii) of Lemma \ref{benson}) that the properties of the functions allow us to
achieve \textquotedblleft global control\textquotedblright\ from
\textquotedblleft local control".

Our development in this section builds extensively on the work of Samko and
her collaborators (cf. \cite{Sa}, \cite{kara}, \cite{Sa2}, and the references
therein) although the specific results dealing with families of functions are
apparently new.

We start with a definition:

\begin{definition}
A non-negative function $\phi$ on an interval $(0,l)\subset\mathbb{R}$ is said
to be almost increasing (a.i.) if there is a constant $C\geq1$ (the constant
of almost increase) such that $\phi\left(  s\right)  \leq C\phi\left(
t\right)  $ for all $s\leq t$ with $s,t\in(0,l)$.
\end{definition}

Now, we will present a lemma that will play a crucial r\^{o}le in what follows.

\begin{lemma}
\label{benson}Let $w$ be a weight and let $q\geq1,\beta\in\lbrack0,1).$The
following conditions are equivalent:

$(i)$ There exists a constant $C>0$ such that for all cubes $Q,$%
\begin{equation}
\int_{0}^{t}\phi_{w,Q,\beta,q}(s)\frac{ds}{s}\leq C\phi_{w,Q,\beta
,q}(t),\text{ for all }t\in(0,\left\vert Q\right\vert ). \label{index1}%
\end{equation}
$(ii)$ There exists $\delta>0$ such that for all cubes $Q,$ $\phi
_{w,Q,\beta,q}(s)s^{-\delta}$ is a.i. on $(0,\left\vert Q\right\vert ),$ with
constant of almost increase independent of $Q$ and $\beta.$

$(iii)$ There exists $\delta>0,\gamma\in(0,1)$ such that for all cubes $Q,$
$\phi_{w,Q,\beta,q}(s)s^{-\delta}$ is a.i. on $(0,\gamma\left\vert
Q\right\vert ),$ with constant of almost increase independent of $Q$ and
$\beta.$
\end{lemma}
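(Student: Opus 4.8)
The plan is to prove the cyclic chain of implications $(i)\Rightarrow(ii)\Rightarrow(iii)\Rightarrow(i)$, which is the standard route in Samko-style index arguments, the only novelty being that we must track that every constant is uniform in the cube $Q$ (and in $\beta$).

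\textbf{Step 1: $(i)\Rightarrow(ii)$.} Fix a cube $Q$ and write $\psi=\phi_{w,Q,\beta,q}$ for brevity; recall that by the hypotheses on $\phi_{w,Q}$ — namely $\phi_{w,Q}$ increasing with $\phi_{w,Q}(s)/s$ decreasing — the function $\psi(s)=[s^{-\beta}\phi_{w,Q}(s)]^{q}$ is a product/power of such factors and hence $\psi$ is itself quasi-increasing and $\psi(s)/s^{q(1-\beta)}$ is quasi-decreasing, with absolute constants. The key elementary observation is that for $s\le t$, monotonicity of $\psi$ (up to the absolute constant) gives $\int_{0}^{s}\psi(u)\tfrac{du}{u}\ge c\,\psi(s)\log$-type lower bounds are \emph{not} quite what we want; instead set $\Psi(t):=\int_{0}^{t}\psi(u)\tfrac{du}{u}$. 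Then $\Psi$ is increasing, $\Psi'(t)=\psi(t)/t$, and (i) reads $\Psi(t)\le C\,t\,\Psi'(t)$, i.e. $(\log \Psi(t))'\ge \tfrac{1}{C\,t}$, which upon integrating from $s$ to $t$ yields $\Psi(t)/\Psi(s)\ge (t/s)^{1/C}$; hence $\Psi(t)t^{-1/C}$ is (monotonically) increasing. Since by quasi-monotonicity of $\psi$ we have $\psi(t)\approx \Psi(t)$ (the upper bound is exactly (i); the lower bound $\Psi(t)\ge \int_{t/2}^{t}\psi(u)\tfrac{du}{u}\ge c\,\psi(t/2)\ge c'\psi(t)$ using quasi-monotonicity), we conclude $\psi(t)t^{-\delta}$ is almost increasing on $(0,|Q|)$ with $\delta=1/C$ and constant of almost increase depending only on $C$ and the absolute quasi-monotonicity constants of $\phi_{w,Q}$ — in particular independent of $Q$ and $\beta$. (One must be slightly careful that the differential-inequality argument is run on an absolutely continuous majorant of $\Psi$; since $\psi$ is quasi-monotone it is bounded on compact subintervals and $\Psi$ is genuinely locally absolutely continuous, so this is fine.)

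\textbf{Step 2: $(ii)\Rightarrow(iii)$.} This is immediate: if $\phi_{w,Q,\beta,q}(s)s^{-\delta}$ is almost increasing on the whole of $(0,|Q|)$ with a $Q$-independent constant, it is a fortiori almost increasing on the subinterval $(0,\gamma|Q|)$ for any fixed $\gamma\in(0,1)$, with the same constant.

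\textbf{Step 3: $(iii)\Rightarrow(i)$.} This is the step that genuinely uses the structural hypotheses to pass from ``local'' to ``global'' control, and I expect it to be the main obstacle. Suppose $g(s):=\phi_{w,Q,\beta,q}(s)s^{-\delta}$ is a.i. on $(0,\gamma|Q|)$ with constant $A$ independent of $Q,\beta$. For $t\in(0,\gamma|Q|)$ one estimates directly
\[
\int_{0}^{t}\phi_{w,Q,\beta,q}(s)\frac{ds}{s}=\int_{0}^{t}g(s)s^{\delta}\frac{ds}{s}\le A\,g(t)\int_{0}^{t}s^{\delta}\frac{ds}{s}=\frac{A}{\delta}\,g(t)\,t^{\delta}=\frac{A}{\delta}\,\phi_{w,Q,\beta,q}(t),
\]
so (i) holds on $(0,\gamma|Q|)$ with constant $A/\delta$. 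It remains to extend the inequality to $t\in[\gamma|Q|,|Q|)$. Here we invoke the quasi-monotonicity of $\phi_{w,Q}$: for such $t$, $\phi_{w,Q,\beta,q}(t)=[t^{-\beta}\phi_{w,Q}(t)]^{q}$, and since $\phi_{w,Q}$ is increasing while $s\mapsto s^{-\beta}$ is decreasing, and $t$ and $\gamma|Q|$ are comparable (ratio between $\gamma$ and $1$), we get $\phi_{w,Q,\beta,q}(t)\ge \gamma^{q}\,[(\gamma|Q|)^{-\beta}\phi_{w,Q}(\gamma|Q|)]^{q}\cdot(\text{using }\phi_{w,Q}(t)\ge\phi_{w,Q}(\gamma|Q|)\text{ and }t^{-\beta}\ge|Q|^{-\beta}\ge\gamma^{\beta}(\gamma|Q|)^{-\beta})$, hence $\phi_{w,Q,\beta,q}(t)\ge \gamma^{q}\phi_{w,Q,\beta,q}(\gamma|Q|)$ with $\gamma^{q}$ a $\beta$-independent constant (as $\gamma\le \gamma^{\beta}$ is false — rather $\gamma^{q}\le\gamma^{q(1-\beta)}\cdot\gamma^{q\beta}$; one keeps the cruder $\ge\gamma^{q}$ bound, valid since $0<\gamma<1$ and all exponents are nonnegative). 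On the other hand, using that $\phi_{w,Q}(s)/s$ decreases one bounds the ``tail'' $\int_{\gamma|Q|}^{t}\phi_{w,Q,\beta,q}(s)\tfrac{ds}{s}$ by a constant times $\phi_{w,Q,\beta,q}(\gamma|Q|)$ (the integrand over a bounded-ratio interval is controlled by its value at the endpoint, since $\phi_{w,Q}(s)\le \tfrac{s}{\gamma|Q|}\phi_{w,Q}(\gamma|Q|)\le\gamma^{-1}\phi_{w,Q}(\gamma|Q|)$ there). Combining:
\[
\int_{0}^{t}\phi_{w,Q,\beta,q}(s)\frac{ds}{s}\le \frac{A}{\delta}\phi_{w,Q,\beta,q}(\gamma|Q|)+c\,\phi_{w,Q,\beta,q}(\gamma|Q|)\le C'\,\phi_{w,Q,\beta,q}(t),
\]
where the last inequality is the comparison $\phi_{w,Q,\beta,q}(\gamma|Q|)\le\gamma^{-q}\phi_{w,Q,\beta,q}(t)$ just established. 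All constants depend only on $A,\delta,\gamma,q$ and the absolute monotonicity constants of the $\phi_{w,Q}$'s, hence are uniform in $Q$ and $\beta$, giving (i). The delicate point throughout Step 3 — and the reason it is the crux — is to keep the passage from the interval $(0,\gamma|Q|)$ to $(0,|Q|)$ quantitatively clean using \emph{only} the two structural monotonicity properties assumed of $\phi_{w,Q}$, so that no hidden dependence on $Q$ or on $\beta$ sneaks in.
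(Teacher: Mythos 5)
Your proof follows exactly the paper's route: the cyclic chain $(i)\Rightarrow(ii)\Rightarrow(iii)\Rightarrow(i)$, the logarithmic differential inequality for $(i)\Rightarrow(ii)$, and in $(iii)\Rightarrow(i)$ the splitting of $\int_0^t$ at $\gamma\left\vert Q\right\vert$ with the tail $\int_{\gamma\left\vert Q\right\vert}^t$ controlled via the two structural monotonicities and $\psi(\gamma\left\vert Q\right\vert)\approx\psi(t)$ on the bounded-ratio interval. Step 3 is fine and matches the paper's bookkeeping in spirit.

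One thing in Step 1 needs to be repaired. You assert that $\psi(s)=\left[s^{-\beta}\phi_{w,Q}(s)\right]^q$ is itself quasi-increasing with an absolute constant. That is \emph{false} under the stated hypotheses: take $\phi_{w,Q}(s)=s^{1/2}$ (increasing, with $\phi_{w,Q}(s)/s$ decreasing) and any $\beta>1/2$; then $\psi(s)=s^{q(1/2-\beta)}$ is strictly decreasing with unbounded ratio $\psi(s)/\psi(t)=(t/s)^{q(\beta-1/2)}$ as $s\to0$. The two hypotheses only give the \emph{uncoupled} bounds $(s/t)^{q(1-\beta)}\le\psi(s)/\psi(t)\le(t/s)^{q\beta}$ for $s\le t$, so $\psi$ is in general neither quasi-increasing nor quasi-decreasing on all of $(0,\left\vert Q\right\vert)$. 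Fortunately your dyadic estimate only compares $\psi$ at the bounded-ratio points $u\in(t/2,t)$ and at $t/2,t$, where these two bounds give two-sided control $\psi(u)\approx\psi(t)$ up to the factor $2^q$; so the displayed chain $\Psi(t)\ge\int_{t/2}^t\psi(u)\tfrac{du}{u}\ge c\,\psi(t)$ is correct, but the stated justification (``quasi-monotonicity of $\psi$'') is not. The paper sidesteps the issue entirely with the one-line lower bound
\[
\Psi(t)=\int_0^t s^{q(1-\beta)}\left[\frac{\phi_{w,Q}(s)}{s}\right]^q\frac{ds}{s}\ge\left[\frac{\phi_{w,Q}(t)}{t}\right]^q\frac{t^{q(1-\beta)}}{q(1-\beta)}=\frac{\psi(t)}{q(1-\beta)},
\]
using only that $\phi_{w,Q}(s)/s$ decreases; the factor $1-\beta\le1$ then disappears when combined with $\Psi(t)\le C\psi(t)$, yielding the a.i.\ constant $qC$ uniformly in $Q$ and $\beta$. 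I'd replace your $\psi$-quasi-monotonicity claim with either this computation or an explicit two-sided bound from the structural hypotheses.
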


\begin{proof}
$(i)\Longrightarrow(ii).$ This is an elementary differential inequality
argument (e.g. cf. \cite{M}) which we include for the sake of completeness.
Let%
\[
F_{w,Q,\beta,q}(t)=\int_{0}^{t}\phi_{w,Q,\beta,q}(s)\frac{ds}{s}.
\]
Then $(i)$ can be rewritten as%
\[
F_{w,Q,\beta,q}(t)\leq Ct(F_{w,Q,\beta,q}(t))^{^{\prime}}.
\]
Therefore,%
\[
\left(  \frac{1}{C}\ln t\right)  ^{\prime}\leq\left(  \ln F_{w,Q,\beta
,q}(t)\right)  ^{\prime},
\]
so that for $0<x<y<\left\vert Q\right\vert ,$ we have%
\[
\ln\left(  \frac{y}{x}\right)  ^{1/C}\leq\ln\frac{F_{w,Q,\beta,q}%
(y)}{F_{w,Q,\beta,q}(x)},
\]
yielding,%
\[
x^{-1/C}F_{w,Q,\beta,q}(x)\leq y^{-1/C}F_{w,Q,\beta,q}(y)\leq y^{-1/C}%
C\phi_{w,Q,\beta,q}(y).
\]
Combining the last inequality with,%
\begin{align*}
F_{w,Q,\beta,q}(x)  &  =\int_{0}^{x}s^{q(1-\beta)}[\frac{\phi_{w,Q}(s)}%
{s}]^{q}\frac{ds}{s}\\
&  \geq\left(  \frac{\phi_{w,Q}(x)}{x}\right)  ^{q}\frac{x^{q(1-\beta)}%
}{q(1-\beta)}\\
&  =\frac{\phi_{w,Q,\beta,q}(x)}{q(1-\beta)},
\end{align*}
implies%
\begin{align*}
x^{-1/C}\phi_{w,Q,\beta,q}(x)  &  \leq(1-\beta)qCy^{-1/C}\phi_{w,Q,\beta
,q}(y)\\
&  \leq qCy^{-1/C}\phi_{Q,\beta,q}(y).
\end{align*}

$(ii)\Rightarrow(iii).$ Is trivial.

$(iii)\Rightarrow(i).$ Suppose that there exists $\delta>0,\gamma\in(0,1)$
such that for all $Q$, $\phi_{w,Q,\beta,q}(s)s^{-\delta}$ is almost increasing
on $(0,\gamma\left\vert Q\right\vert )$, with constant of a.i. $C,$
independent of $Q$ and $\beta.$ Consider two cases. If $t<\gamma\left\vert
Q\right\vert ,$ then we can write
\begin{align*}
\int_{0}^{t}\phi_{w,Q,\beta,q}(s)\frac{ds}{s}  &  =\int_{0}^{t}\phi
_{w,Q,\beta,q}(s)s^{-\delta}s^{\delta}\frac{ds}{s}\\
&  \leq C\phi_{w,Q,\beta,q}(t)t^{-\delta}\frac{t^{\delta}}{\delta}\\
&  =\frac{C}{\delta}\phi_{w,Q,\beta,q}(t).
\end{align*}

Now suppose that $t\in(\gamma\left\vert Q\right\vert ,\left\vert Q\right\vert
).$ Then%
\begin{align*}
\int_{0}^{t}\phi_{w,Q,\beta,q}(s)\frac{ds}{s}  &  =\int_{0}^{\gamma\left\vert
Q\right\vert }\phi_{w,Q,\beta,q}(s)\frac{ds}{s}+\int_{\gamma\left\vert
Q\right\vert }^{t}\phi_{w,Q,\beta,q}(s)\frac{ds}{s}\\
&  =(I)+(II).
\end{align*}
By the first part of the proof
\begin{align*}
(I)  &  =\int_{0}^{\gamma\left\vert Q\right\vert }\phi_{w,Q,\beta,q}%
(s)\frac{ds}{s}\\
&  \leq\frac{C}{\delta}\phi_{w,Q,\beta,q}(\gamma\left\vert Q\right\vert )\\
&  =\frac{C}{\delta}\left(  \phi_{w,Q}(\gamma\left\vert Q\right\vert )\right)
^{q}\gamma^{-q\beta}\left\vert Q\right\vert ^{-q\beta}\\
&  \leq\frac{C}{\delta}\left(  \phi_{w,Q}(t)\right)  ^{q}\gamma^{-\beta
q}t^{-q\beta}\text{ \ \ \ \ (since }\phi_{w,Q}\text{ increases and
}t<\left\vert Q\right\vert \text{)}\\
&  =\frac{C}{\delta}\gamma^{-\beta q}\phi_{w,Q,\beta,q}(t).
\end{align*}
To estimate the remaining integral we use successively that $\phi_{w,Q}$
increases and $t<\left\vert Q\right\vert ,$ to obtain
\begin{align*}
(II)  &  =\int_{\gamma\left\vert Q\right\vert }^{t}\phi_{w,Q,\beta,q}%
(s)\frac{ds}{s}\\
&  =\int_{\gamma\left\vert Q\right\vert }^{t}\left(  \phi_{w,Q}(s)\right)
^{q}s^{-\beta q}\frac{ds}{s}\\
&  \leq\left(  \phi_{w,Q}(t)\right)  ^{q}\int_{\gamma\left\vert Q\right\vert
}^{t}s^{-\beta q-1}ds\\
&  \leq\left(  \phi_{w,Q}(t)\right)  ^{q}\frac{1}{\beta q}\frac{t^{\beta
q}-(\gamma\left\vert Q\right\vert )^{\beta q}}{t^{\beta q}(\gamma\left\vert
Q\right\vert )^{\beta q}}\\
&  \leq\phi_{w,Q,\beta,q}(t)\frac{1}{\beta q}\frac{1-\gamma^{\beta q}}%
{\gamma^{\beta q}}.
\end{align*}
Combining the estimates for $(I)$ and $(II)$ we obtain
\[
\int_{0}^{t}\phi_{w,Q,\beta}(s)\frac{ds}{s}\leq\left(  \frac{C}{\delta}%
\gamma^{-\beta q}+\frac{1}{\beta q}\frac{1-\gamma^{\beta q}}{\gamma^{\beta q}%
}\right)  \phi_{w,Q,\beta}(t).
\]
But it is easy to obtain a bound independent of $\beta$ on the right hand
side. Indeed, by elementary calculus we see that the function $f\left(
x\right)  =x\ln\left(  x\right)  -\frac{1}{\beta q}\left(  x^{q\beta
}-1\right)  $ is increasing on $[1,+\infty)$ and $f(1)=0,$ therefore
$\gamma^{-1}\ln\left(  \gamma^{-1}\right)  \geq$ $f(\frac{1}{\gamma})>\frac
{1}{\beta q}\left(  \frac{1-\gamma^{\beta q}}{\gamma^{\beta q}}\right)  ,$
while $\frac{C}{\delta}\gamma^{-\beta q}\leq\frac{C}{\delta}\gamma^{-1}.$
Therefore, we obtain
\[
\int_{0}^{t}\phi_{w,Q,\beta,q}(s)\frac{ds}{s}\leq\left(  \frac{C}{\delta
}\gamma^{-1}+\gamma^{-1}\ln\left(  \gamma^{-1}\right)  \right)  \phi
_{w,Q,\beta,q}(t),
\]
and the desired result follows.
\end{proof}

The preceding Lemma combined with the work of Samko and her collaborators (cf.
(\ref{jardin}) below) motivated the following definition

\begin{definition}
\label{def:estrela}Let $w$ be a given weight and let $\beta\in\lbrack
0,1),q\geq1.$ Consider family of functions $\{\phi_{w,Q,\beta,q}\}_{Q}$ as
above. We define the index $ind$ of $\{\phi_{w,Q,\beta,q}\}_{Q}$ as follows,%
\begin{align}
ind\{\phi_{w,Q,\beta,q}\}_{Q}  &  =\sup\{\delta\geq0:\exists\gamma
\in(0,1)\text{ such that \ for all cubes }Q\text{, }\phi_{w,Q,\beta
,q}(s)s^{-\delta}\text{ is a.i.}\label{laindicada}\\
&  \text{ on }(0,\gamma\left\vert Q\right\vert )\text{, with constant of a.i.
independent of }Q\}\nonumber
\end{align}

When $\beta=0,q=1,$ we put $\phi_{w,Q,0,1}(s):=\phi_{w,Q}(s);$ then note that
$ind\{\phi_{w,Q}\}_{Q}=ind\{\phi_{w,Q,0,1}\}_{Q}.$
\end{definition}

The same definition applies when dealing with a single function $\phi
_{\beta,q}(s)=\left(  s^{-\beta}\phi(s)\right)  ^{q},$ where, for the sake of
comparison, we assume that $\phi$ is such that $\phi(s)$ increases and
$\phi(s)/s$ decreases on $(0,l)$. For single functions we use the following
compatible definition (cf. \cite[Theorem 3.6, pag 448]{kara})%
\begin{equation}
i\{\phi_{\beta,q}\}=\sup\{\delta\geq0:\text{ }\phi_{\beta,q}(s)s^{-\delta
}\text{ is a.i. on }(0,l)\}. \label{jardin}%
\end{equation}
The following remark will be useful in what follows

\begin{remark}
\label{remark:remarkao}Let $w$ be a weight and let $q\geq1,\beta\in
\lbrack0,1).$ Then,%
\[
ind\{\phi_{w,Q,\beta}\}_{Q}>0\Leftrightarrow ind\{\phi_{w,Q,\beta,q}\}_{Q}>0.
\]
Likewise,%
\[
i\{\phi_{\beta,q}\}>0\Leftrightarrow i\{\phi_{\beta}\}>0.
\]

\end{remark}

\begin{proof}
The result follows directly from Definitions \ref{def:estrela} and
(\ref{jardin}). For example, note that if $\phi_{Q,\beta}(s)s^{-\delta}$ is
a.i. then $(\phi_{Q,\beta}(s))^{q}s^{-\tilde{\delta}}$ is a.i., with
$\tilde{\delta}=\delta q;$ and conversely if $(\phi_{Q,\beta}(s))^{q}%
s^{-\delta}$ is a.i, then $\phi_{Q,\beta}(s)s^{-\delta/q}$ is a.i..
\end{proof}

With this definition we can now reformulate Lemma \ref{benson} as follows

\begin{proposition}
\label{aero}Let $w$ be a weight, and let $q\geq1,\beta\in\lbrack0,1).$ The
following are equivalent:

(i) There exists $C>0$ independent of $Q$ and $\beta$ such that for all
\[
\int_{0}^{t}\phi_{w,Q,\beta,q}(s)\frac{ds}{s}\leq C\phi_{w,Q,\beta
,q}(t),\text{ for all }t\in(0,\left\vert Q\right\vert ).
\]
(ii)
\begin{equation}
ind\{\phi_{w,Q,\beta,q}\}_{Q}>0. \label{index2}%
\end{equation}

\end{proposition}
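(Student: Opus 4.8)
The plan is to deduce Proposition \ref{aero} directly from Lemma \ref{benson} together with the bookkeeping definition of the index in Definition \ref{def:estrela}; essentially all the analytic work has already been done, and what remains is to translate between the two formulations.

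First I would prove the implication $(i)\Rightarrow(ii)$. Assume the integral inequality in $(i)$ holds with a constant $C$ independent of $Q$ and $\beta$. By the implication $(i)\Rightarrow(ii)$ of Lemma \ref{benson}, there exists $\delta>0$ such that $\phi_{w,Q,\beta,q}(s)s^{-\delta}$ is almost increasing on the whole interval $(0,\left\vert Q\right\vert)$, with constant of almost increase independent of $Q$ and $\beta$. Taking $\gamma$ to be, say, $1/2$ (or any fixed value in $(0,1)$), this same function is a fortiori almost increasing on $(0,\gamma\left\vert Q\right\vert)$ with the same uniform constant. Hence $\delta$ is an admissible value in the supremum defining $ind\{\phi_{w,Q,\beta,q}\}_{Q}$ in (\ref{laindicada}), so $ind\{\phi_{w,Q,\beta,q}\}_{Q}\geq\delta>0$, which is (\ref{index2}).

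For the converse $(ii)\Rightarrow(i)$, suppose $ind\{\phi_{w,Q,\beta,q}\}_{Q}>0$. By the definition of the index as a supremum, there exist $\delta>0$ and $\gamma\in(0,1)$ such that for all cubes $Q$ the function $\phi_{w,Q,\beta,q}(s)s^{-\delta}$ is almost increasing on $(0,\gamma\left\vert Q\right\vert)$, with constant of almost increase independent of $Q$; one should note here that the definition already builds in $\beta$-uniformity of the constant, matching the hypothesis of part $(iii)$ of Lemma \ref{benson}. This is precisely condition $(iii)$ of Lemma \ref{benson}, so by the implication $(iii)\Rightarrow(i)$ of that lemma we obtain a constant $C>0$, which the proof of Lemma \ref{benson} exhibits as $\frac{C'}{\delta}\gamma^{-1}+\gamma^{-1}\ln(\gamma^{-1})$ and which therefore does not depend on $Q$ or $\beta$, such that $\int_{0}^{t}\phi_{w,Q,\beta,q}(s)\frac{ds}{s}\leq C\phi_{w,Q,\beta,q}(t)$ for all $t\in(0,\left\vert Q\right\vert)$. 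This is exactly $(i)$.

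There is no real obstacle here: the content is entirely in Lemma \ref{benson}, and the proposition is a restatement packaged through the index notation. The one small point that deserves a word of care is the interplay of the three quantifiers — over cubes $Q$, over the parameter $\beta$, and the choice of $\gamma$ — and making sure that the "for all $\beta$" uniformity that appears in the statement of $(i)$ is matched by the corresponding uniformity clauses in Lemma \ref{benson}$(ii)$--$(iii)$ and is implicitly present in Definition \ref{def:estrela}. Once that is observed, the proof is a two-line appeal to the lemma in each direction.
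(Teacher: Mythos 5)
Your proof is correct and follows essentially the same route as the paper: both directions are obtained by appealing to the equivalences of Lemma \ref{benson} (you pass through $(i)\Rightarrow(ii)\Rightarrow(iii)$ where the paper invokes $(i)\Leftrightarrow(iii)$ directly, which is the same content) and then reading off Definition \ref{def:estrela}.
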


\begin{proof}
Suppose (i) holds. Then, by Lemma \ref{benson} (iii), there exists $\delta>0$
and $\gamma\in(0,1)$ such that for all $Q,$ $\phi_{w,Q,\beta,q}(s)s^{-\delta}$
is a.i. on $(0,\gamma\left\vert Q\right\vert ),$ with constant of a.i.
independent of $Q.$ Therefore, (\ref{index2}) holds directly from Definition
\ref{def:estrela}. Likewise, if (\ref{index2}) holds then Lemma \ref{benson}
(iii) holds, and therefore (i) holds.
\end{proof}

We now show that, in some sense, the computation of $ind\{\phi_{w,Q,\beta
}\}_{Q}$ can be reduced to the computation of $ind\{\phi_{w,Q}\}_{Q}$

\begin{proposition}
\label{switch}%
\[
ind\{\phi_{w,Q,\beta,q}\}_{Q}>0\Leftrightarrow ind\{\phi_{w,Q,\beta}%
\}_{Q}>0\Leftrightarrow ind\{\phi_{w,Q}\}_{Q}>\beta.
\]

\end{proposition}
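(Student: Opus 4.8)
The plan is to prove the chain of equivalences by establishing the two outer equivalences and then noting that they pinch the middle quantity. By Remark \ref{remark:remarkao} applied to families, we already have $ind\{\phi_{w,Q,\beta,q}\}_{Q}>0\Leftrightarrow ind\{\phi_{w,Q,\beta}\}_{Q}>0$, so the first equivalence is free. The real content is the equivalence $ind\{\phi_{w,Q,\beta}\}_{Q}>0\Leftrightarrow ind\{\phi_{w,Q}\}_{Q}>\beta$. For this I would work directly from Definition \ref{def:estrela}, unwinding what it means for $\phi_{w,Q,\beta}(s)s^{-\delta}=s^{-\beta}\phi_{w,Q}(s)s^{-\delta}=\phi_{w,Q}(s)s^{-(\beta+\delta)}$ to be almost increasing: the a.i. property of the $\beta$-twisted family at exponent $\delta$ is literally the a.i. property of the untwisted family at exponent $\beta+\delta$, with the same constant of almost increase and the same $\gamma$. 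So the only thing to track is the bookkeeping of exponents.

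First I would prove the forward direction: assume $ind\{\phi_{w,Q,\beta}\}_{Q}>0$. Then by definition there is $\delta>0$ and $\gamma\in(0,1)$ so that $\phi_{w,Q,\beta}(s)s^{-\delta}=\phi_{w,Q}(s)s^{-(\beta+\delta)}$ is a.i. on $(0,\gamma|Q|)$ with constant independent of $Q$. This exhibits $\beta+\delta$ as an admissible exponent in the definition of $ind\{\phi_{w,Q}\}_{Q}$, so $ind\{\phi_{w,Q}\}_{Q}\geq\beta+\delta>\beta$. Conversely, assume $ind\{\phi_{w,Q}\}_{Q}>\beta$. Pick $\delta'$ with $\beta<\delta'<ind\{\phi_{w,Q}\}_{Q}$; then there is $\gamma\in(0,1)$ with $\phi_{w,Q}(s)s^{-\delta'}$ a.i. on $(0,\gamma|Q|)$ uniformly in $Q$. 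Writing $\delta'=\beta+\delta$ with $\delta:=\delta'-\beta>0$, we have $\phi_{w,Q}(s)s^{-\delta'}=\phi_{w,Q,\beta}(s)s^{-\delta}$ a.i., so $\delta$ is admissible for $ind\{\phi_{w,Q,\beta}\}_{Q}$, giving $ind\{\phi_{w,Q,\beta}\}_{Q}\geq\delta>0$. This completes the second equivalence. One small point to be careful about: the definition of $ind$ takes a supremum over $\delta\geq0$, and one must make sure the strict inequalities line up — using a strictly interior exponent $\delta'$ as above handles this cleanly, since "$>\beta$" of a supremum guarantees an admissible exponent strictly exceeding $\beta$.

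Finally, I would assemble the chain: $ind\{\phi_{w,Q,\beta,q}\}_{Q}>0\Leftrightarrow ind\{\phi_{w,Q,\beta}\}_{Q}>0$ by Remark \ref{remark:remarkao}, and $ind\{\phi_{w,Q,\beta}\}_{Q}>0\Leftrightarrow ind\{\phi_{w,Q}\}_{Q}>\beta$ by the exponent-shifting argument just given. There is essentially no analytic obstacle here; the proof is purely a matter of carefully matching the quantifiers and exponents in Definition \ref{def:estrela}. The one place that requires a moment's thought — and the closest thing to a "hard part" — is ensuring that the constant of almost increase and the cutoff parameter $\gamma$ genuinely transfer unchanged between the two families, which they do because the two expressions $\phi_{w,Q,\beta}(s)s^{-\delta}$ and $\phi_{w,Q}(s)s^{-(\beta+\delta)}$ are identical functions, not merely comparable ones; hence uniformity over $Q$ is preserved verbatim.
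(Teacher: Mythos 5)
Your proposal is correct and takes essentially the same approach as the paper: the first equivalence is delegated to Remark \ref{remark:remarkao}, and the second is proved via the exponent-shift identity $\phi_{w,Q,\beta}(s)s^{-\delta}=\phi_{w,Q}(s)s^{-(\beta+\delta)}$, which transfers the almost-increase property (with the same $\gamma$ and a.i.\ constant) between the two families and hence shifts the index by $\beta$.
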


\begin{proof}
The first equivalence was proved in Remark \ref{remark:remarkao}. We therefore
only need to prove the second equivalence. Towards this end let us fix an
arbitrary cube $Q.$ The case $\beta=0$ holds by definition since $\phi
_{w,Q,0}=\phi_{w,Q}.$ Therefore we shall now assume that $\beta>0.$ Suppose,
moreover, that $ind\{\phi_{w,Q}\}_{Q}>\beta,$ then we can find $\delta
>0,\gamma\in(0,1),$ such that $\delta>\beta$ and $\phi_{w,Q}(s)s^{-\delta}$ is
a.i. on $(0,\gamma\left\vert Q\right\vert ).$ Therefore, since%
\[
\phi_{w,Q,\beta}(s)s^{-(\delta-\beta)}=\phi_{w,Q}(s)s^{-\delta}%
\]
is almost increasing on $(0,\gamma\left\vert Q\right\vert ),$ with
$\delta-\beta>0,$ and since $Q$ was arbitrary, we see that $ind\{\phi
_{w,Q,\beta}\}_{Q}>0.$ Conversely, if $ind\{\phi_{w,Q,\beta}\}_{Q}>0,$ then we
can find $\delta>0$ such that for any cube $Q,$ $\phi_{w,Q,\beta}%
(s)s^{-\delta}=\phi_{w,Q}(s)s^{-(\delta+\beta)}$ is a.i. on $(0,\gamma
\left\vert Q\right\vert )$ for some fixed $\gamma\in(0,1).$ Therefore, since%
\[
\phi_{w,Q}(s)s^{-\beta}=\left(  \phi_{w,Q}(s)s^{-(\delta+\beta)}\right)
s^{\delta}%
\]
we see that%
\[
ind\{\phi_{w,Q}\}_{Q}>\beta.
\]

\end{proof}

The usual definitions of indices in the literature concern the index of one
function. We shall now compare the results in this section with classical
results using the more common definitions of indices. For comparison
purposes\footnote{The results for the classical indices are valid under less
restrictive conditions.} we let $\phi$ be defined on $(0,l),$ with $\phi$
increasing and $\phi(s)/s$ decreasing. Then many definitions are equivalent.
Here we shall specialize our results and consider only functions of the form
$\Psi(s)=(s^{-\beta}\phi(s))^{q}.$ Let (cf. \cite{BaSt}, \cite{Sa}),%
\[
\alpha_{\Psi}=\sup_{x>1}\frac{\ln\left(  \underrightarrow{\lim}%
_{h\longrightarrow0}\frac{\Psi(xh)}{\Psi(h)}\right)  }{\ln x}.
\]
Then we have the classical result (cf. \cite{BaSt}, \cite{Krein}, \cite{Mal},
\cite{MM}, \cite{Sa}, and the references therein giving the same result under
different definitions of indices)

\begin{lemma}
\label{benson2}The following are equivalent:

(i) There exists a constant $C>0$ such that
\begin{equation}
\int_{0}^{t}\Psi(s)\frac{ds}{s}\leq C\Psi(t),\text{ for all \ }t\in(0,l).
\label{aero2}%
\end{equation}

(ii) $\alpha_{\Psi}>0.$
\end{lemma}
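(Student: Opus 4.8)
The plan is to reduce this single-function statement to the differential-inequality / almost-increasing equivalence already established, and to connect both conditions to the Boyd-type index $\alpha_{\Psi}$. The structure mirrors Lemma \ref{benson} and Proposition \ref{aero}, but now for a function $\Psi(s)=(s^{-\beta}\phi(s))^q$ defined on a single interval $(0,l)$, so there is no family and no uniformity-in-$Q$ to track; this makes the argument cleaner. I would prove the cycle $(i)\Rightarrow\alpha_{\Psi}>0\Rightarrow(i)$, or equivalently $(i)\Leftrightarrow i\{\Psi\}>0\Leftrightarrow\alpha_{\Psi}>0$, where $i\{\Psi\}$ is the quasi-monotonicity index from \eqref{jardin}.

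First I would show $(i)\Rightarrow i\{\Psi\}>0$. This is exactly the differential-inequality argument from the proof of Lemma \ref{benson}, $(i)\Rightarrow(ii)$, specialized to one interval: set $F(t)=\int_0^t\Psi(s)\,\frac{ds}{s}$, rewrite \eqref{aero2} as $F(t)\le Ct F'(t)$, integrate $(\tfrac{1}{C}\ln t)'\le(\ln F(t))'$ to get that $t^{-1/C}F(t)$ is increasing, and then use the lower bound $F(t)\ge\frac{\Psi(t)}{q(1-\beta)}$ (valid because $\Psi(s)=s^{q(1-\beta)}(\phi(s)/s)^q$ with $\phi(s)/s$ decreasing, so the integrand dominates a power) to conclude $t^{-1/C}\Psi(t)$ is almost increasing on $(0,l)$. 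Hence $i\{\Psi\}\ge 1/C>0$. The converse, $i\{\Psi\}>0\Rightarrow(i)$, is the $(iii)\Rightarrow(i)$ computation of Lemma \ref{benson} with $\gamma$ replaced by $1$ (no need for the $\beta$-uniform bookkeeping): if $\Psi(s)s^{-\delta}$ is a.i.\ with constant $C$, then $\int_0^t\Psi(s)\frac{ds}{s}\le C\Psi(t)t^{-\delta}\int_0^t s^{\delta-1}ds=\frac{C}{\delta}\Psi(t)$.

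It remains to identify $i\{\Psi\}>0$ with $\alpha_{\Psi}>0$. The inequality $i\{\Psi\}\le\alpha_{\Psi}$ is routine: if $\Psi(s)s^{-\delta}$ is a.i.\ with constant $C$, then for $x>1$, $\Psi(xh)\ge C^{-1}x^\delta\Psi(h)$, so $\liminf_{h\to0}\Psi(xh)/\Psi(h)\ge C^{-1}x^\delta$ and $\frac{\ln(\cdots)}{\ln x}\to\delta$ as $x\to\infty$ (or directly gives a lower bound approaching $\delta$), hence $\alpha_{\Psi}\ge\delta$; taking the sup over admissible $\delta$ gives $\alpha_{\Psi}\ge i\{\Psi\}$, so in particular $i\{\Psi\}>0\Rightarrow\alpha_{\Psi}>0$. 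For the reverse direction, $\alpha_{\Psi}>0\Rightarrow i\{\Psi\}>0$: pick $\delta$ with $0<\delta<\alpha_{\Psi}$ and $x_0>1$ with $\frac{\ln(\liminf_{h\to0}\Psi(x_0h)/\Psi(h))}{\ln x_0}>\delta$, i.e.\ $\liminf_{h\to0}\Psi(x_0 h)/\Psi(h)>x_0^{\delta}$; then there is $\eta>0$ so that $\Psi(x_0 h)\ge x_0^{\delta}\Psi(h)$ for all $h<\eta$, which by iterating across the geometric scales $x_0^{-k}$ (using that $\Psi$ is, up to the decreasing factor $(\phi/s)^q$, comparable to a power on each scale — or more simply, using that $\phi$ increasing and $\phi(s)/s$ decreasing force a doubling-type regularity on $\Psi$) yields that $\Psi(s)s^{-\delta}$ is almost increasing on $(0,\eta)$, and then a standard extension across the fixed interval $[\eta,l)$ (where $\Psi$ is bounded above and below away from $0$ by monotonicity of $\phi$ and $\phi/s$) gives a.i.\ on all of $(0,l)$. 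Thus $i\{\Psi\}\ge\delta>0$.

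I expect the last paragraph — specifically the passage from a single scale-$x_0$ inequality to almost-increase on an interval — to be the main obstacle, since one must control $\Psi$ on the intermediate points $s\in(x_0^{-(k+1)},x_0^{-k})$ and not just at the grid points; here the hypotheses that $\phi$ is increasing and $\phi(s)/s$ is decreasing are exactly what supply the needed interpolation between grid points (they bound $\Psi(s)/\Psi(s')$ for $s'/s\in[1,x_0]$ by a constant depending only on $x_0,\beta,q$). I would remark that this is precisely Theorem 3.6 of \cite{kara} (and the classical index results in \cite{BaSt}, \cite{Krein}, \cite{Mal}, \cite{Sa}), so in the write-up one may either cite it directly or include the short grid argument; given the paper's self-contained style I would include a few lines of the grid argument.
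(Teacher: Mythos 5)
The paper does not actually prove Lemma \ref{benson2}; it is stated as a classical fact with references to \cite{BaSt}, \cite{Krein}, \cite{Mal}, \cite{MM}, \cite{Sa}, and the result is used as a black box (e.g.\ in Corollary \ref{aero5}). Your proposal therefore supplies a proof the paper deliberately omitted, and it is essentially the standard argument in the spirit of \cite{kara} that the paper gestures at. The reduction $(i)\Leftrightarrow i\{\Psi\}>0$ is the single-interval specialization of the Lemma \ref{benson} / Proposition \ref{aero} machinery, and that part is routine given what the paper has already shown; the direction $i\{\Psi\}>0\Rightarrow\alpha_{\Psi}>0$ is exactly the computation the paper performs in its Example after Corollary \ref{aero5}. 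The only genuinely new step is $\alpha_{\Psi}>0\Rightarrow i\{\Psi\}>0$, which you correctly identify as the crux. Your grid argument there is sound: choosing $x_{0}>1$ with $\liminf_{h\to0}\Psi(x_{0}h)/\Psi(h)>x_{0}^{\delta}$ gives $\Psi(x_{0}h)\geq x_{0}^{\delta}\Psi(h)$ on $(0,\eta)$, iteration over the grid $\{x_{0}^{-k}\}$ yields the power growth at grid points, and the structural hypotheses on $\phi$ (namely $\phi$ increasing and $\phi(s)/s$ decreasing) give the two-sided interpolation bounds $x_{0}^{-q\beta}\leq\Psi(rs)/\Psi(s)\leq x_{0}^{q(1-\beta)}$ for $r\in[1,x_{0}]$ that convert grid control into almost-increase on $(0,\eta)$; the extension to $(0,l)$ then follows since $\Psi(s)s^{-\delta}$ is pinched between positive constants on $[\eta,l)$ (again by the two monotonicities of $\phi$). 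One small point worth spelling out in a write-up is that the definition \eqref{jardin} demands almost-increase on all of $(0,l)$, not merely near $0$, so the extension step is not cosmetic but genuinely needed; you do flag and handle it. Overall the proof is correct and complete modulo the acknowledged expansion of the grid computation.
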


Combining Proposition \ref{aero} and Lemma \ref{benson2} we see that our
definition of index of a single function (\ref{jardin}) is compatible with the
classical ones.

\begin{corollary}
\label{aero5}Let $\Psi$ be a function defined on $(0,l)$ as above. Then,%
\[
\alpha_{\Psi}>0\Leftrightarrow i\{\phi_{\Psi}\}>0.
\]

\end{corollary}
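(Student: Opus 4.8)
The plan is to obtain Corollary~\ref{aero5} by chaining two equivalences that are, in essence, already available. The first is Lemma~\ref{benson2}, which states that $\alpha_{\Psi}>0$ if and only if $\int_{0}^{t}\Psi(s)\frac{ds}{s}\leq C\Psi(t)$ for all $t\in(0,l)$. The second is the single-function specialization of Lemma~\ref{benson} (equivalently Proposition~\ref{aero}), which states that this integral inequality holds if and only if there exists $\delta>0$ such that $\Psi(s)s^{-\delta}$ is almost increasing on $(0,l)$; and, by the defining formula (\ref{jardin}), the latter is precisely the assertion $i\{\phi_{\Psi}\}>0$. So the whole argument is the chain $\alpha_{\Psi}>0\Leftrightarrow\int_{0}^{t}\Psi(s)\frac{ds}{s}\leq C\Psi(t)\Leftrightarrow i\{\phi_{\Psi}\}>0$.

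First I would record that Lemma~\ref{benson} applies verbatim to a single function. Its hypotheses --- $\phi$ continuous, increasing, with $\phi(s)/s$ decreasing --- are exactly those imposed on $\Psi(s)=(s^{-\beta}\phi(s))^{q}$ here, and one simply takes the ``family'' to be the singleton $\{\Psi\}$ on the fixed interval $(0,l)$, with $l$ playing the role of $\left\vert Q\right\vert $. Since there is only one interval, the auxiliary parameter $\gamma$ is unnecessary: one works directly on all of $(0,l)$, so parts (ii) and (iii) of Lemma~\ref{benson} collapse to the single statement ``$\exists\,\delta>0$ such that $\Psi(s)s^{-\delta}$ is a.i. on $(0,l)$''. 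Concretely, the forward implication is the differential-inequality argument already used in the proof of Lemma~\ref{benson}: setting $F(t)=\int_{0}^{t}\Psi(s)\frac{ds}{s}$, the hypothesis reads $F(t)\leq CtF'(t)$, whence $(\ln F)'\geq\frac{1}{Ct}$, so $s^{-1/C}F(s)$ is nondecreasing; combining this with the elementary lower bound $F(s)\geq\Psi(s)/\bigl(q(1-\beta)\bigr)$, which uses that $\phi(s)/s$ decreases, shows that $\Psi(s)s^{-1/C}$ is almost increasing on $(0,l)$, i.e. $i\{\phi_{\Psi}\}\geq 1/C>0$. The converse is immediate: if $\Psi(s)s^{-\delta}$ is a.i.\ with constant $C$, then $\int_{0}^{t}\Psi(s)\frac{ds}{s}=\int_{0}^{t}\bigl(\Psi(s)s^{-\delta}\bigr)s^{\delta}\frac{ds}{s}\leq C\Psi(t)t^{-\delta}\int_{0}^{t}s^{\delta-1}\,ds=\frac{C}{\delta}\Psi(t)$.

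Having established this second equivalence, I would conclude by combining it with Lemma~\ref{benson2} to get $\alpha_{\Psi}>0\Leftrightarrow i\{\phi_{\Psi}\}>0$, which is the claim. I do not expect a genuine obstacle here: the substantive content is entirely contained in Lemma~\ref{benson} and Lemma~\ref{benson2}, and the only point that warrants a word of justification is that specializing a family indexed by cubes to a single function on a fixed interval is harmless --- it is merely the degenerate case of the family set-up. This is exactly the sense in which, as noted after (\ref{jardin}), the index $i\{\phi_{\beta,q}\}$ is ``compatible'' with the classical index $\alpha_{\Psi}$.
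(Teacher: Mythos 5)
Your proposal is correct and follows essentially the same route as the paper's proof, which is simply the chain $\alpha_{\Psi}>0\Leftrightarrow(\ref{aero2})\Leftrightarrow i\{\phi_{\Psi}\}>0$ obtained by combining Lemma~\ref{benson2} with the single-function specialization of Proposition~\ref{aero}. The only difference is that you unpack the specialization in more detail (rightly noting that the $\gamma$ parameter and parts (ii)/(iii) of Lemma~\ref{benson} become redundant for a single function on a fixed interval), whereas the paper simply invokes ``the proof of Proposition~\ref{aero} for single functions.''
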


\begin{proof}
The proof of Proposition (\ref{aero}) for single functions gives us that
(\ref{aero2}) holds if and only if $i\{\phi_{\Psi}\}>0.$ On the other hand, by
Lemma \ref{benson2} we know that (\ref{aero2}) holds if and only if
$\alpha_{\Psi}>0.$ The result follows.
\end{proof}

\begin{example}
The compatibility of the index (for a single function) with the classical
indices is discussed in \cite{kara}. In this example we show a simple
calculation that hints the reason why the index considered here coincides with
classical indices for the classes of functions under consideration. Suppose
that $\phi(s)s^{-\delta}$ is almost increasing (a.i.), then, for some constant
$c\geq1,$ we have that for $x>1,$%
\[
(xh)^{-\delta}\phi(xh)\geq\frac{1}{c}h^{-\delta}\phi(h).
\]
It follows that%
\begin{align*}
\alpha_{\phi}  &  =\sup_{x>1}\frac{\ln\left(  \underrightarrow{\lim
}_{h\longrightarrow0}\frac{\phi(xh)}{\phi(h)}\right)  }{\ln x}\\
&  \geq\sup_{x>1}\frac{\ln\left(  \frac{1}{c}x^{\delta}\right)  }{\ln x}\\
&  =\sup_{x>1}\{\delta+\frac{\ln(\frac{1}{c})}{\ln x}\}\\
&  =\delta.
\end{align*}

\end{example}

\subsection{Characterization of abstract reverse H\"{o}lder classes via
indices}

In this section we essentially show how the results of Mastylo-Milman
\cite{MM} can be obtained using the indices we have introduced in this paper.

\begin{theorem}
\label{teo:ind}Let $\vec{X}$ be a Banach pair, $\theta\in(0,1),$ and $q\geq1.$
Then,%
\[
RH_{\theta,q}(\vec{X})=\{w\in X_{0}+X_{1}:i\{K(\cdot,w;\vec{X})\}>\theta\}.
\]

\end{theorem}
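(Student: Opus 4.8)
The plan is to reduce the theorem to Proposition~\ref{aero} (equivalently Lemma~\ref{benson}) applied to the family $\{\phi_{w,\vec X,\theta,1}\}$ consisting of the single function $\phi_{w,\vec X}(s)=K(s,w;\vec X)$, and then invoke the single-function index identities of Proposition~\ref{switch}, Remark~\ref{remark:remarkao} and Corollary~\ref{aero5}. The first step is to rewrite the defining inequality
\[
K(t,w;\vec X_{\theta,q},X_1)\leq C\,t\,\frac{K(t^{1/(1-\theta)},w;\vec X)}{t^{1/(1-\theta)}},\qquad t>0,
\]
using Holmstedt's reiteration formula exactly as in the Remark following Definition~\ref{interpola}: Holmstedt gives
\[
K(t,w;\vec X_{\theta,q},X_1)\approx\left(\int_0^{t^{1/(1-\theta)}}\big[s^{-\theta}K(s,w;\vec X)\big]^q\frac{ds}{s}\right)^{1/q},
\]
so that, after the change of variable $u=t^{1/(1-\theta)}$ and noting $t\cdot t^{-1/(1-\theta)}=t^{-\theta/(1-\theta)}=u^{-\theta}$, the condition $w\in RH_{\theta,q}(\vec X)$ becomes exactly
\[
\int_0^{u}\big[s^{-\theta}K(s,w;\vec X)\big]^q\frac{ds}{s}\leq C'\big[u^{-\theta}K(u,w;\vec X)\big]^q,\qquad u>0,
\]
i.e. inequality (\ref{index1})/(\ref{aero2}) for the function $\Psi(s)=\phi_{w,\vec X,\theta,q}(s)=(s^{-\theta}K(s,w;\vec X))^q$. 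Here I would use that $K(\cdot,w;\vec X)$ is non-decreasing and $K(t,w;\vec X)/t$ is non-increasing, so $\Psi$ is of the admissible form for the index machinery.

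\textbf{Main steps in order.} First, carry out the Holmstedt rewriting above to get the equivalence $w\in RH_{\theta,q}(\vec X)\Longleftrightarrow \int_0^u\Psi(s)\frac{ds}{s}\leq C'\Psi(u)$ for all $u>0$. Second, apply Proposition~\ref{aero} (for a single function, i.e. the trivial one-element ``family'') to conclude that this holds iff $i\{\phi_{\Psi}\}=i\{\phi_{w,\vec X,\theta,q}\}>0$. Third, apply the single-function analogue of Proposition~\ref{switch}, together with Remark~\ref{remark:remarkao}, to reduce the chain of equivalences: $i\{\phi_{w,\vec X,\theta,q}\}>0\Leftrightarrow i\{\phi_{w,\vec X,\theta}\}>0\Leftrightarrow i\{K(\cdot,w;\vec X)\}>\theta$, where $\phi_{w,\vec X,\theta}(s)=s^{-\theta}K(s,w;\vec X)$. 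Fourth, assemble: $w\in RH_{\theta,q}(\vec X)\Leftrightarrow i\{K(\cdot,w;\vec X)\}>\theta$, which is the claimed set equality. One must also check that $w\in X_0+X_1$ is exactly the ambient condition on both sides (it is, by definition of $RH_{\theta,q}(\vec X)$ and of the index of $K(\cdot,w;\vec X)$).

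\textbf{Expected main obstacle.} The one genuinely delicate point is the careful handling of the \emph{equivalence constants} in Holmstedt's formula and of the role of the embedding norm $n=n_{X_0,X_1}$: for an ordered pair $K(s,w;\vec X)$ stabilizes for $s>n$ and $\vec X_{\theta,q}$ is defined by an integral over $(0,\infty)$, so I must make sure the rewriting of the $RH_{\theta,q}$ condition over \emph{all} $t>0$ matches the ``$(0,l)$'' domain in (\ref{jardin}) and in Lemma~\ref{benson2}, and that the constants stay uniform. Since here we deal with a single function (not a genuine family indexed by cubes), there is no uniformity-in-$Q$ issue, so this is really just a bookkeeping matter; the substantive content is entirely carried by the already-established Proposition~\ref{aero}, Proposition~\ref{switch}, and Corollary~\ref{aero5}. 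A secondary check: for $\theta\in(0,1)$ the pair need not be ordered, so I should confirm the index definition (\ref{jardin}) and Lemma~\ref{benson2} apply on the relevant interval (taking $l=\infty$ when $\vec X$ is not ordered), which is consistent with how $\alpha_\Psi$ and $i\{\cdot\}$ were set up above.
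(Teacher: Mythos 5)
Your proposal is correct and follows essentially the same route as the paper: rewrite the $RH_{\theta,q}(\vec X)$ condition via Holmstedt's formula into an integral inequality for $\Psi(s)=[s^{-\theta}K(s,w;\vec X)]^q$, convert it to index positivity (the paper cites Lemma~\ref{benson2} together with Corollary~\ref{aero5} where you cite the single-function form of Proposition~\ref{aero}, but these are the same content), and then use Proposition~\ref{switch} to peel off the $\theta$ and the exponent $q$. Your closing remark about domains and constants is right but not a real obstacle: since $\theta$, $q$, and $w$ are fixed, the Holmstedt constants are harmless, and the paper's proof simply works on $(0,\infty)$ without further comment.
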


\begin{proof}
We shall use the following special case of Holmstedt's formula (cf.
\cite[Corollary 3.6.2 (b), pag 53]{BL}) (with constants dependent on
$\theta,q$ but not on $w$)%
\begin{equation}
K(t,w;\vec{X}_{\theta,q},X_{1})\approx\left\{  \int_{0}^{t^{1/(1-\theta)}%
}[s^{-\theta}K(s,w;\vec{X})]^{q}\frac{ds}{s}\right\}  ^{1/q}. \label{holm}%
\end{equation}
Fix $(\theta,q)\in(0,1)\times\lbrack1,\infty).$ Let $w\in RH_{\theta,q}%
(\vec{X}).$ Using (\ref{holm}) we can rewrite (\ref{Ktheta}) as%
\begin{equation}
\left\{  \int_{0}^{t^{1/(1-\theta)}}[s^{-\theta}K(s,w;\vec{X})]^{q}\frac
{ds}{s}\right\}  ^{1/q}\leq C\left\Vert w\right\Vert _{RH_{\theta,q}(\vec{X}%
)}t\frac{K(t^{\frac{1}{1-\theta}},w,\vec{X})}{t^{\frac{1}{1-\theta}}},\forall
t>0, \label{holm2}%
\end{equation}
which simplifies to%
\begin{equation}
\int_{0}^{t}[s^{-\theta}K(s,w;\vec{X})]^{q}\frac{ds}{s}\leq C\left\Vert
w\right\Vert _{RH_{\theta,q}(\vec{X})}^{q}[t^{-\theta}K(t,w,\vec{X}%
)]^{q},\forall t>0. \label{holm1}%
\end{equation}
Since $K(s,w;\vec{X})$ increases and $\frac{K(s,w;\vec{X})}{s}$ decreases, by
Lemma \ref{benson2} and Corollary \ref{aero5} we have
\begin{equation}
i\{[(\cdot)^{-\theta}K(\cdot,w;\vec{X})]^{q}\}>0. \label{holm3}%
\end{equation}
Consequently, by Proposition \ref{switch}, it follows that
\[
i\{K(\cdot,w;\vec{X})\}>\theta.
\]
It is easy to see that all the steps can be now reversed. Indeed, if the
previous inequality holds, then, by Proposition \ref{switch}, we see that
(\ref{holm3}) holds and, by Lemma \ref{benson2}, we find that (\ref{holm1})
holds for all $t>0.$ Changing $t\rightarrow t^{1/(1-\theta)}$ in the resulting
inequality, we successively see that (\ref{holm2}), (\ref{holm}), and,
finally, (\ref{Ktheta}) hold, as we wished to show.
\end{proof}

\begin{remark}
\label{introdespues} Note that the second index does not appear in the
abstract characterization of $RH_{\theta,q}(\vec{X}),$ therefore it follows
that, for all $q\geq1,$%
\begin{align*}
RH_{\theta,q}(\vec{X})  &  =\{w\in X_{0}+X_{1}:i\{K(\cdot,w;\vec{X}%
)\}>\theta\}\\
&  =RH_{\theta,1}(\vec{X}).
\end{align*}

\end{remark}

The previous analysis also yields the following characterization of the
limiting class $RH(\vec{X})$ defined by
\[
RH(\vec{X})=\bigcup\limits_{(\theta,q)\in(0,1)\times\lbrack1,\infty
)}RH_{\theta,q}(\vec{X}).
\]

\begin{theorem}
\label{teo:teo}%
\[
RH(\vec{X})=\{w:i\{K(\cdot,w;\vec{X})\}>0\}.
\]

\end{theorem}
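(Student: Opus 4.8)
The plan is to reduce the statement $RH(\vec{X})=\{w:i\{K(\cdot,w;\vec{X})\}>0\}$ to what has already been established, namely Theorem \ref{teo:ind}, which identifies $RH_{\theta,q}(\vec{X})$ with $\{w\in X_{0}+X_{1}:i\{K(\cdot,w;\vec{X})\}>\theta\}$ for each fixed $\theta\in(0,1)$ and $q\geq1$, together with the definition $RH(\vec{X})=\bigcup_{(\theta,q)\in(0,1)\times[1,\infty)}RH_{\theta,q}(\vec{X})$. Since by Remark \ref{introdespues} the second index is irrelevant, it suffices to work with $q=1$ throughout, so effectively $RH(\vec{X})=\bigcup_{\theta\in(0,1)}RH_{\theta,1}(\vec{X})$.

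The argument is then a short union computation. For the inclusion $\subseteq$: if $w\in RH(\vec{X})$, then $w\in RH_{\theta,1}(\vec{X})$ for some $\theta\in(0,1)$, so by Theorem \ref{teo:ind} we have $i\{K(\cdot,w;\vec{X})\}>\theta>0$, hence $w$ belongs to the right-hand side. For the reverse inclusion $\supseteq$: suppose $i\{K(\cdot,w;\vec{X})\}>0$. Then one can choose $\theta\in(0,1)$ with $0<\theta<i\{K(\cdot,w;\vec{X})\}$ — this is possible because $i\{K(\cdot,w;\vec{X})\}$ is a positive real number (or $+\infty$), and $(0,1)$ has nonempty intersection with any interval $(0,c)$ for $c>0$; explicitly take $\theta=\tfrac{1}{2}\min\{1,i\{K(\cdot,w;\vec{X})\}\}$. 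For this $\theta$ we have $i\{K(\cdot,w;\vec{X})\}>\theta$, so Theorem \ref{teo:ind} (with $q=1$) gives $w\in RH_{\theta,1}(\vec{X})\subseteq RH(\vec{X})$. This closes the chain of inclusions and proves equality.

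I would also note, for completeness, that this uses implicitly that $w\in X_{0}+X_{1}$ in the right-hand set description; this is automatic since the index $i\{K(\cdot,w;\vec{X})\}$ is only defined for elements $w\in X_{0}+X_{1}$ (the $K$-functional requires it), so the set $\{w:i\{K(\cdot,w;\vec{X})\}>0\}$ is tacitly a subset of $X_{0}+X_{1}$, matching the convention used for $RH_{\theta,q}(\vec{X})$.

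There is really no serious obstacle here: the entire content has been front-loaded into Theorem \ref{teo:ind} and the elementary index manipulations of Proposition \ref{switch} and Lemma \ref{benson}. The only point requiring a word of care is the selection of an admissible $\theta$ strictly between $0$ and the index — one must make sure to cap it below $1$ so that $\theta\in(0,1)$ as required by the definition of $RH_{\theta,q}(\vec{X})$, which is why the explicit choice $\theta=\tfrac12\min\{1,i\{K(\cdot,w;\vec{X})\}\}$ is convenient. Everything else is a one-line application of an already-proved theorem to each member of a union.
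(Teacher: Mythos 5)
Your argument is correct and follows essentially the same route as the paper: both directions are reduced to Theorem \ref{teo:ind} by taking the union over $\theta$ (the forward inclusion is immediate, and the converse selects a $\theta$ strictly between $0$ and the index). Your explicit choice $\theta=\tfrac12\min\{1,i\{K(\cdot,w;\vec{X})\}\}$ and the appeal to Remark \ref{introdespues} to fix $q=1$ are minor cosmetic differences; the paper simply picks any $q\geq1$ and any admissible $\theta$.
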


\begin{proof}
Let $w\in\bigcup\limits_{(\theta,q)\in(0,1)\times\lbrack1,\infty)}%
RH_{\theta,q}(\vec{X}).$ It follows that there exist $\theta\in(0,1),$ and
$q\geq1,$ such that $x\in RH_{\theta,q}(\vec{X}).$ Therefore, by the previous
theorem, $i\{K(\cdot,w;\vec{X})\}>\theta>0$.

Conversely, suppose that%
\[
i\{K(\cdot,w;\vec{X})\}>0.
\]
Let $q\geq1,$ and select $\theta$ such that $i\{K(\cdot,w;\vec{X}%
)\}>\theta>0.$ Then, by Theorem \ref{teo:ind}, $w\in RH_{\theta,q}(\vec
{X})\subset RH(\vec{X}).$
\end{proof}

The limiting case $\theta=0$ can be obtained using the same arguments.

\begin{corollary}
\label{coro:marcao}Let $\vec{X}$ be an ordered Banach pair. Then%
\[
RH_{0,1}(\vec{X})=RH(\vec{X}).
\]

\end{corollary}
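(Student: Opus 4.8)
The plan is to show both inclusions $RH_{0,1}(\vec X)\subseteq RH(\vec X)$ and $RH(\vec X)\subseteq RH_{0,1}(\vec X)$ by identifying $RH_{0,1}(\vec X)$ with the index condition $i\{K(\cdot,w;\vec X)\}>0$, so that Theorem \ref{teo:teo} finishes the job. Recall that for an ordered pair $\vec X$ the space $RH_{0,1}(\vec X)$ is defined by the inequality
\[
\int_0^t K(s,w;\vec X)\frac{ds}{s}\le C\,K(t,w;\vec X),\quad 0<t<n,
\]
which is precisely condition (i) of Proposition \ref{aero} (equivalently Lemma \ref{benson2}) in the case of a single function with $\beta=0$, $q=1$, $\phi=K(\cdot,w;\vec X)$, on the interval $(0,n)$. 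Note $K(\cdot,w;\vec X)$ is concave, hence increasing with $K(s,w;\vec X)/s$ decreasing, so the hypotheses of those results apply.

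First I would invoke the single-function version of Proposition \ref{aero} (as used in the proof of Corollary \ref{aero5}) to conclude that the displayed inequality above holds if and only if $i\{\phi_{0,1}\}>0$, where $\phi_{0,1}=K(\cdot,w;\vec X)$; that is, $w\in RH_{0,1}(\vec X)$ iff $i\{K(\cdot,w;\vec X)\}>0$. Combining with Theorem \ref{teo:teo}, which states $RH(\vec X)=\{w:i\{K(\cdot,w;\vec X)\}>0\}$, gives $RH_{0,1}(\vec X)=RH(\vec X)$ immediately.

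For the inclusion $RH(\vec X)\subseteq RH_{0,1}(\vec X)$ it is perhaps cleaner to argue directly: if $w\in RH_{\theta,q}(\vec X)$ for some $\theta\in(0,1)$, $q\ge1$, then by Theorem \ref{teo:ind} we have $i\{K(\cdot,w;\vec X)\}>\theta>0$, hence by the equivalence above $w\in RH_{0,1}(\vec X)$. Conversely, $w\in RH_{0,1}(\vec X)$ gives $i\{K(\cdot,w;\vec X)\}>0$, so we may pick $\theta$ with $0<\theta<i\{K(\cdot,w;\vec X)\}$ and any $q\ge1$, and Theorem \ref{teo:ind} yields $w\in RH_{\theta,q}(\vec X)\subseteq RH(\vec X)$.

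The only point requiring a small amount of care — and the main (minor) obstacle — is the domain of definition: $RH_{0,1}(\vec X)$ is stated on $(0,n)$ where $n=n_{X_0,X_1}$ is the embedding norm, and since $K(t,w;\vec X)$ is constant for $t>n$ (as noted in the footnote to the definition of $\vec X_{0,1}$), the inequality on $(0,n)$ is equivalent to the inequality on all of $(0,\infty)$ with $t$ replaced by $\min(t,n)$; this matches exactly the "$l$" appearing as the endpoint of the interval in the single-function definition (\ref{jardin}) and in Lemma \ref{benson2}. So no genuine difficulty arises, and the corollary follows by assembling Proposition \ref{aero}, Theorem \ref{teo:ind}, and Theorem \ref{teo:teo}.
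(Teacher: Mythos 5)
Your proposal is correct and follows essentially the same route as the paper: both identify $w\in RH_{0,1}(\vec X)$ with the index condition $i\{K(\cdot,w;\vec X)\}>0$ via Lemma \ref{benson2}/Corollary \ref{aero5} (your ``single-function version of Proposition \ref{aero}''), and then apply Theorem \ref{teo:teo} to conclude equality with $RH(\vec X)$. Your additional remark about matching the interval $(0,n)$ with the ``$(0,l)$'' in the single-function definition is a reasonable point of care that the paper leaves implicit, but it does not change the argument.
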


\begin{proof}
Let $n$ be the norm of the embedding, $X_{1}\subset X_{0}.$ By definition,
$w\in RH_{0,1}(\vec{X})$ if and only for all $0<t<n$,
\begin{equation}
\int_{0}^{t}K(s,w;\vec{X})\frac{ds}{s}\leq cK(t,w;\vec{X}). \label{ordinaria}%
\end{equation}
By Lemma \ref{benson2} and Corollary \ref{aero5}, (\ref{ordinaria}) is
equivalent to%
\[
i\{K(\cdot,w;\vec{X})\}>0.
\]
Consequently, by Theorem \ref{teo:teo}, $w\in RH(\vec{X})$.

Conversely, if $w\in RH(\vec{X}),$ then, by Theorem \ref{teo:teo},
$i\{K(\cdot,w;\vec{X})\}>0,$ consequently, by Lemma \ref{benson2}, we find
that (\ref{ordinaria}) holds, whence $w\in RH_{0,1}(\vec{X}).$
\end{proof}

In this framework Gehring's Lemma is a triviality

\begin{theorem}
(Gehring's Lemma) (i) Let $\theta\in(0,1),1\leq q<\infty.$ Suppose that $w\in
RH_{\theta,q}(\vec{X}),$ then there exists $\theta^{\prime}>\theta,$ such
that, for all $1<p<\infty,$ $w\in RH_{\theta^{\prime},p}(\vec{X}).$

(ii) Suppose that $w\in RH_{0,1}(\vec{X})$ then there exists $\theta^{\prime
}>0,$ $1<p<\infty,$ such that $w\in RH_{\theta^{\prime},p}(\vec{X}).$
\end{theorem}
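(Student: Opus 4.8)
The plan is to deduce both parts directly from the index characterization of the abstract reverse H\"{o}lder classes established in the preceding subsection, which already encodes the relevant openness. The key point, recorded in Theorem \ref{teo:ind} and Remark \ref{introdespues}, is that for every $q\geq 1$ one has
\[
RH_{\theta,q}(\vec{X})=\{w\in X_{0}+X_{1}:i\{K(\cdot,w;\vec{X})\}>\theta\},
\]
so that membership is governed solely by a \emph{strict} inequality for the single-function index $i\{K(\cdot,w;\vec{X})\}$, with no dependence on the secondary parameter $q$. Once this is in hand the self-improvement is automatic, because a strict inequality $i\{K(\cdot,w;\vec{X})\}>\theta$ leaves room to increase $\theta$.

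For part (i), I would start from $w\in RH_{\theta,q}(\vec{X})$ and invoke Theorem \ref{teo:ind} to obtain $i\{K(\cdot,w;\vec{X})\}>\theta$. By the definition of the supremum in (\ref{jardin}), one can then choose $\theta^{\prime}$ with
\[
i\{K(\cdot,w;\vec{X})\}>\theta^{\prime}>\theta .
\]
Fixing any $p\in(1,\infty)$ and applying Theorem \ref{teo:ind} for the pair $(\theta^{\prime},p)$ in the reverse direction yields $w\in RH_{\theta^{\prime},p}(\vec{X})$, which is the assertion.

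For part (ii), I would first use Corollary \ref{coro:marcao} to rewrite the hypothesis $w\in RH_{0,1}(\vec{X})$ as $w\in RH(\vec{X})$, and then Theorem \ref{teo:teo} to get $i\{K(\cdot,w;\vec{X})\}>0$. Choosing $\theta^{\prime}$ with $0<\theta^{\prime}<i\{K(\cdot,w;\vec{X})\}$ and any $p\in(1,\infty)$, Theorem \ref{teo:ind} again gives $w\in RH_{\theta^{\prime},p}(\vec{X})$.

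There is essentially no obstacle here: the entire substance of Gehring's Lemma has been absorbed into building the index characterization, so what remains is only the trivial observation that a strict index inequality can be relaxed by passing to an intermediate exponent. The one point worth stating explicitly is that the characterization in Theorem \ref{teo:ind} is genuinely independent of the second index $q$ (which is precisely Remark \ref{introdespues}), so that the improved exponent $\theta^{\prime}$ may be freely combined with an arbitrary second index $p\in(1,\infty)$.
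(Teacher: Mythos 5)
Your proof is correct and follows essentially the same route as the paper: part (i) is exactly the paper's argument via Theorem \ref{teo:ind}, and part (ii) merely unwinds the paper's terse appeal to Corollary \ref{coro:marcao} through Theorem \ref{teo:teo} and then Theorem \ref{teo:ind}, which is what that corollary's proof amounts to anyway.
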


\begin{proof}
(i) By Theorem \ref{teo:ind}, $i\{K(\cdot,w;\vec{X})\}>\theta.$ Pick
$\theta^{\prime}\in(\theta,i\{K(\cdot,w;\vec{X})\})$ then by Theorem
\ref{teo:ind}, $w\in RH_{\theta^{\prime},p}(\vec{X}),$ for all $p>1.$

(ii) Follows directly from Corollary \ref{coro:marcao}.
\end{proof}

\subsection{Classical Reverse H\"{o}lder classes and indices}

In this section we characterize the classical classes of weights that satisfy
reverse H\"{o}lder inequalities. The results are completely analogous to the
ones in the previous section but the characterizations are now given in terms
of indices of families of $K$-functionals of the weights involved.

Let $w$ be a weight and let $p\in\lbrack1,\infty).$ The family of functions we
use here can be defined as follows. Let $w$ be a weight and for each cube $Q$,
let $\phi_{w,Q}(s)=K(s,w\chi_{Q},L^{1},L^{\infty});$ and moreover, let%
\begin{align*}
\phi_{w,Q,1/p^{\prime}}(s)  &  =s^{-1/p^{\prime}}K(s,w\chi_{Q},L^{1}%
,L^{\infty}),0<s<\left\vert Q\right\vert .\\
\phi_{w,Q,1/p^{\prime},q}(s)  &  =\left(  s^{-1/p^{\prime}}K(s,w\chi_{Q}%
,L^{1},L^{\infty})\right)  ^{q},0<s<\left\vert Q\right\vert .
\end{align*}

Combining the above results with the characterization of classical reverse
H\"{o}lder inequalities in terms of abstract H\"{o}lder classes that were
given in Section \ref{sec:classicalvs}, we obtain

\begin{theorem}
\label{teo:galvanizado}(i) Let $p>1,$ then
\[
RH_{p}=\{w:ind\{\phi_{w,Q,1/p^{\prime},p}\}_{Q}>0\}=\{w:ind\{\phi_{w,Q}%
\}_{Q}>1/p^{\prime}\}
\]

(ii)
\[
RH=\{w:ind\{\phi_{w,Q}\}_{Q}>0\}
\]

(iii)%
\[
RH=RH_{LLogL}.
\]

\end{theorem}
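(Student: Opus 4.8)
The plan is to prove Theorem \ref{teo:galvanizado} by assembling the pieces that have already been established, treating parts (i), (ii), (iii) in turn. For part (i), the strategy is to chain together Theorem \ref{teo:p} and Theorem \ref{teo:ind}. Recall that $w\in RH_p$ iff $w\chi_Q\in RH_{1-1/p,p}(L^1(Q),L^\infty(Q))$ for all cubes $Q$ with uniform norm control. By Theorem \ref{teo:ind} applied to the pair $\vec X=(L^1(Q),L^\infty(Q))$ with $\theta=1-1/p=1/p$... wait, here one must be careful: $1-1/p=1/p'$ where $p'$ is the conjugate exponent, so $\theta=1/p'$. Thus $w\chi_Q\in RH_{1/p',p}(L^1(Q),L^\infty(Q))$ iff $i\{K(\cdot,w\chi_Q;L^1(Q),L^\infty(Q))\}>1/p'$. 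The subtle point is that Theorem \ref{teo:ind} is a statement about a single function and a single pair, whereas $RH_p$ requires a \emph{uniform} bound over all cubes. So the real work in part (i) is to verify that the family version of Holmstedt's formula (with constants depending only on $p$, not on $Q$ or $w$) lets one pass from the uniform bound $\sup_Q\|w\chi_Q\|_{RH_{1/p',p}}<\infty$ to the statement $ind\{\phi_{w,Q,1/p',p}\}_Q>0$, i.e. to Proposition \ref{aero} with $\beta=1/p'$, $q=p$. Then Proposition \ref{switch} converts $ind\{\phi_{w,Q,1/p',p}\}_Q>0$ to $ind\{\phi_{w,Q}\}_Q>1/p'$, giving the second equality in (i).

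More precisely, the steps for (i) are: first, invoke Theorem \ref{teo:p} to identify $RH_p$ with $\{w:\sup_Q\|w\chi_Q\|_{RH_{1/p',p}(L^1(Q),L^\infty(Q))}<\infty\}$. Second, unwind the definition of $RH_{1/p',p}(L^1(Q),L^\infty(Q))$ via \eqref{Ktheta} and apply the $Q$-independent Holmstedt formula \eqref{holm} (the constants in \eqref{holm} depend only on $\theta=1/p'$ and $q=p$, not on the pair, since the pair $(L^1(Q),L^\infty(Q))$ is an ordered couple and the reiteration estimate is uniform) to rewrite the uniform bound as: there exists $C$ such that for all $Q$, $\int_0^t(\phi_{w,Q,1/p',p}(s))\frac{ds}{s}\le C\phi_{w,Q,1/p',p}(t)$ for $0<t<|Q|$. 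Third, apply Proposition \ref{aero} (equivalence (i)$\Leftrightarrow$(ii)) to conclude $ind\{\phi_{w,Q,1/p',p}\}_Q>0$. Fourth, apply Proposition \ref{switch} to get the chain $ind\{\phi_{w,Q,1/p',p}\}_Q>0\Leftrightarrow ind\{\phi_{w,Q}\}_Q>1/p'$. All implications are reversible, so this gives (i).

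For part (ii), the plan is to take the union over $p>1$ of the identities in (i). Since $RH=\bigcup_{p>1}RH_p$ by definition \eqref{rh1}, and $RH_p=\{w:ind\{\phi_{w,Q}\}_Q>1/p'\}$, we get $w\in RH$ iff there exists $p>1$ with $ind\{\phi_{w,Q}\}_Q>1/p'$; since $1/p'$ ranges over $(0,1)$ as $p$ ranges over $(1,\infty)$, this is equivalent to $ind\{\phi_{w,Q}\}_Q>0$. The only thing to check here is monotonicity/exhaustion: that $ind\{\phi_{w,Q}\}_Q>0$ implies one can actually \emph{find} such a $p$, which is immediate since $ind>0$ means $ind>1/p'$ for $p$ close enough to $1$. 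This is the abstract-to-classical analogue of Theorem \ref{teo:teo}, and is essentially a triviality once (i) is in hand.

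For part (iii), the plan is to combine part (ii) with Theorem \ref{teo:limite} and Corollary \ref{coro:marcao}. By Theorem \ref{teo:limite}, $w\in RH_{LLogL}$ iff $w\chi_Q\in RH_{0,1}(L^1(Q),L^\infty(Q))$ for all $Q$ with uniform norm control; by definition \eqref{Kcero} and Lemma \ref{benson2}/Corollary \ref{aero5} (uniformly in $Q$), this uniform bound is equivalent to $ind\{\phi_{w,Q}\}_Q>0$ — this is the $\beta=0$, $q=1$ case of Proposition \ref{aero} combined with Proposition \ref{switch}. But by part (ii), $ind\{\phi_{w,Q}\}_Q>0$ is exactly the characterization of $RH$. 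Hence $RH_{LLogL}=RH$. I expect the main obstacle to be bookkeeping the uniformity of constants: one must make sure that every invocation of Holmstedt's formula, Herz's inequalities, and the index equivalences carries a constant independent of the cube $Q$ (and, where relevant, independent of $\beta$), since the whole point of the families-of-functions machinery in Lemma \ref{benson} and Definition \ref{def:estrela} is precisely to track this uniformity. Once that is granted, the theorem is a formal consequence of the results already proved.
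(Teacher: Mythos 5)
Your proposal is correct and, for parts (i) and (ii), follows essentially the same path as the paper: Theorem \ref{teo:p} converts the $RH_p$ condition into the uniform $K$-functional inequality (\ref{Ktheta}), Holmstedt's formula recasts this as the family of integral inequalities, and Propositions \ref{aero} and \ref{switch} then give the two index characterizations; part (ii) is the trivial union over $p>1$. One small slip: you invoke Lemma \ref{benson2} and Corollary \ref{aero5}, which concern a single function, where the uniform-in-$Q$ statement actually needed is Proposition \ref{aero}; you do then cite the right tool, so this is only a wording issue. Also note that for $\beta=0$, $q=1$ one has $\phi_{w,Q,0,1}=\phi_{w,Q}$, so Proposition \ref{switch} plays no role in part (iii).

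For part (iii) you take a genuinely different, and in fact more unified, route. The paper proves $RH\subset RH_{LLogL}$ by a direct analytic estimate: given $w\in RH_p$, it bounds $\log(e+|Q|/s)$ by $(|Q|/s)^{\alpha}$, applies H\"older's inequality to control $\frac{1}{|Q|}\int_0^{|Q|}(w\chi_Q)^*(s)\log(e+|Q|/s)\,ds$ by the $L^p$-average, uses the $RH_p$ condition, and then invokes Lemma \ref{lema:previo}; only the reverse inclusion $RH_{LLogL}\subset RH$ goes through Theorem \ref{teo:limite}, Proposition \ref{aero}, and part (ii). You instead treat both directions simultaneously by observing that Theorem \ref{teo:limite} is a full biconditional: $w\in RH_{LLogL}$ iff $\sup_Q\|w\chi_Q\|_{RH_{0,1}}<\infty$, which by Proposition \ref{aero} (with $\beta=0$, $q=1$) is equivalent to $\mathrm{ind}\{\phi_{w,Q}\}_Q>0$, which by (ii) is equivalent to $w\in RH$. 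This is cleaner in that it reuses the machinery without a separate computation, at the cost of leaning entirely on the already-established biconditional Theorem \ref{teo:limite}; the paper's direct H\"older argument has the advantage of being elementary and self-contained, and of making the inclusion $RH_p\subset RH_{LLogL}$ visible without going through the index characterization. Both arguments are correct and non-circular.
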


\begin{proof}
\textbf{(i) }Suppose that $w\in RH_{p}.$ Then, by Theorem \ref{teo:p} we have
that for all cubes $Q,$ $w\chi_{Q}\in RH_{1/p^{\prime},p}(L^{1}(Q),L^{\infty
}(Q))$ and
\[
\sup_{Q}\left\Vert w\chi_{Q}\right\Vert _{RH_{1/p^{\prime},p}(L^{1}%
(Q),L^{\infty}(Q))}\approx\left\Vert w\right\Vert _{RH_{p}}.
\]
By Theorem \ref{teo:p} there exists a constant $c>0,$ such that for all cubes
$Q,$%
\[
\int_{0}^{t}\phi_{Q,1/p^{\prime},p}(s)\frac{ds}{s}\leq c\left\Vert
w\right\Vert _{RH_{p}}^{p}\phi_{Q,1/p^{\prime},p}(s),\text{ \ }0<t<\left\vert
Q\right\vert .
\]
Consequently, by Proposition \ref{aero} followed by Proposition \ref{switch},%
\[
ind\{\phi_{w,Q}\}_{Q}>1/p^{\prime}.
\]
Conversely, suppose that $w$ is such that $ind\{\phi_{w,Q,1/p^{\prime}%
,p}\}_{Q}>0.$ Then, by Proposition \ref{aero}, there exists a constant $c>0$
such that
\[
\int_{0}^{t}\phi_{Q,1/p^{\prime},p}(s)\frac{ds}{s}\leq c\phi_{Q,1/p^{\prime
},p}(s),\text{ \ }0<t<\left\vert Q\right\vert .
\]
It follows by Theorem \ref{teo:p} that
\[
\sup\limits_{Q}\left\Vert w\chi_{Q}\right\Vert _{RH_{1-1/p,p}(L^{1}%
(Q),L^{\infty}(Q))}\preceq c,
\]
and, moreover, $w\in RH_{p},$ with%
\[
\left\Vert w\right\Vert _{RH_{p}}\preceq c.
\]

\textbf{(ii)} Suppose that $w\in RH,$ then $w\in RH_{p}$ for some $p$. Then,
by part (i), $ind\{\phi_{w,Q}\}_{Q}>1/p^{\prime}>0.$ Conversely, if \ $w$ is a
weight such that $\ ind\{\phi_{w,Q}\}_{Q}>0,$ then we can select $p>1$ close
enough to $1$ so that $\frac{1}{p^{\prime}}=1-\frac{1}{p}<ind\{\phi
_{w,Q}\}_{Q}.$ Therefore, by (i), $w\in RH_{p}\subset RH.$

\textbf{(iii)} We show first the inclusion $RH\subset RH_{LLogL}$. Suppose
that $w\in RH,$ then there exists $p>1$ such that $w\in RH_{p}.$ Now, it is
easy to verify that for  $0<\alpha<1,$ we have $\log(e+\frac{1}{x})\preceq
x^{-\alpha}$, $x\in(0,1);$ consequently, by H\"{o}lder's inequality, we have%
\begin{align*}
\frac{1}{\left\vert Q\right\vert }\int_{0}^{\left\vert Q\right\vert }%
(w\chi_{Q})^{\ast}(s)\log(e+\frac{\left\vert Q\right\vert }{s})ds &
\preceq(\frac{1}{\left\vert Q\right\vert }\int_{0}^{\left\vert Q\right\vert
}(w\chi_{Q})^{\ast}(s)^{p}ds)^{1/p}(\frac{1}{\left\vert Q\right\vert }\int%
_{0}^{\left\vert Q\right\vert }\left\vert Q\right\vert ^{-\alpha p^{\prime}%
}s^{\alpha p^{\prime}}ds)^{1/p^{\prime}}\\
&  \preceq(\frac{1}{\left\vert Q\right\vert }\int_{0}^{\left\vert Q\right\vert
}(w\chi_{Q})^{\ast}(s)^{p}ds)^{1/p}\\
&  \preceq\frac{1}{\left\vert Q\right\vert }\int_{0}^{\left\vert Q\right\vert
}(w\chi_{Q})^{\ast}(s)ds\text{ (since }w\in RH_{p}\text{).}%
\end{align*}
Therefore, \ the conclusion follows from Lemma \ref{lema:previo}.

We now prove the opposite inclusion. Suppose that $w\in RH_{LLogL}.$ Then, by
Theorem \ref{teo:limite}, for all cubes $Q$ we have that $w\chi_{Q}\in
RH_{0,1}\left(  L^{1}\left(  Q\right)  ,L^{\infty}\left(  Q\right)  \right)  $
and, moreover, $\sup_{Q}\left\Vert w\chi_{Q}\right\Vert _{RH_{0,1}%
(L^{1}(Q),L^{\infty}(Q))}\approx\left\Vert w\right\Vert _{RH_{LLogL}}$. It
follows that%
\[
\int_{0}^{t}K(s,w\chi_{Q};L^{1}\left(  Q\right)  ,L^{\infty}\left(  Q\right)
)\frac{ds}{s}\leq\left\Vert w\right\Vert _{RH_{LLogL}}K(t,w\chi_{Q}%
;L^{1}\left(  Q\right)  ,L^{\infty}\left(  Q\right)  ).
\]
Consequently, by Proposition \ref{aero}%
\[
ind\{K(\cdot,w\chi_{Q};L^{1}\left(  Q\right)  ,L^{\infty}\left(  Q\right)
)\}_{Q}>0.
\]

\end{proof}

\subsection{Non-doubling weights\label{sec:doubling}}

Let $1<p<\infty.$ For a locally integrable positive function $w$, we define
the class of reverse H\"{o}lder weights $RH_{p}(w)$ simply replacing $dx$ by
$w(x)dx$ in the definition of $RH_{p}.$ Thus, we say that $g\in RH_{p}(w),$ if
there exists $C\geq1$ such that for every cube $Q,$ with sides parallel to the
coordinate axes, we have
\[
\left(  \frac{1}{w(Q)}\int_{Q}g(x)^{p}w(x)dx\right)  ^{1/p}\leq\frac{C}%
{w(Q)}\int_{Q}g(x)w(x)dx,
\]
where $w(Q)=\int_{Q}w(x)dx$. If the measure $\mu:=$ $w(x)dx$ satisfies a
doubling condition, i.e., if there exists a constant $c>0$ such that
$\mu(B(x,2r))\leq c\mu(B(x,r)),$ then for the maximal operator $M_{w}$%
\[
M_{w}g(x)=\sup_{Q\ni x}\frac{1}{w(Q)}\int_{Q}g(x)w(x)dx
\]
we have the equivalence\footnote{here the rearrangements $f_{w}^{\ast}%
,f_{w}^{\ast\ast},$ etc, are with respect to the measure $w(x)dx$}%
\begin{equation}
(M_{w}g)_{w}^{\ast}(t)\approx g_{w}^{\ast\ast}(t), \label{nd1}%
\end{equation}
which in turn gives us%
\[
(M_{w}g)_{w}^{\ast}(t)\approx\frac{K(t,g;L_{w}^{1},L_{w}^{\infty})}{t}.
\]
Therefore, we can use the analysis of the previous sections \textit{mutatis
mutandi}. On the other hand, when dealing with non-doubling measures
(\ref{nd1}) may fail. In particular, the equivalence%
\[
tg_{w}^{\ast\ast}(t)\approx K(t,g;L_{w}^{1},L_{w}^{\infty}),
\]
may not hold, so that the implementation of the interpolation method, as
discussed in previous sections, requires a different approach. Fortunately, it
is possible to find an alternative formula for the $K-$functional that
resolves this obstacle, as explained in the Introduction (cf. (\ref{spi}),
(\ref{nn}) and (\ref{nn1}) for the relevant formulae). In particular, from the
definitions given in the Introduction, for each packing $\pi,$%

\[
S_{\pi}(g)(x)=\sum_{i=1}^{\left\vert \pi\right\vert }\left(  \frac{1}%
{w(Q_{i})}\int_{Q_{i}}g(y)w(y)dy\right)  \chi_{Q_{i}}(x),\ \ \ \ g\in
L_{w}^{1}(\mathbb{R}^{n})+L^{\infty}(\mathbb{R}^{n}),
\]
and for $1\leq p<\infty,$ we have (cf. \cite{aa})
\[
K(t,g;L_{w}^{p},L^{\infty})\approx t^{1/p}\sup\limits_{\pi}\left(  S_{\pi
}(\left\vert g\right\vert ^{p})\right)  _{w}^{\ast}(t).
\]
The import of this construction is that it allows to translate the information
provided by the definition of a reverse H\"{o}lder inequality in the language
of $K-$functionals, while avoiding the use of the possibly unbounded maximal
operator $M_{w}.$ Indeed, suppose that $g\in RH_{p}(w(x)dx),$ then, directly
from the definitions, we see that%
\[
t^{1-1/p}K(t^{1/p},g;L_{w}^{p}(\mathbb{R}^{n}),L^{\infty}(\mathbb{R}^{n}))\leq
C\left\Vert g\right\Vert _{RH_{p}(w(x)dx)}K(t,g;L_{w}^{1}(\mathbb{R}%
^{n}),L^{\infty}(\mathbb{R}^{n})).
\]
Consequently, if we let $\theta=1-1/p,$ we get%
\[
t^{\theta}K(t^{1-\theta},g;L_{w}^{p}(\mathbb{R}^{n}),L^{\infty}(\mathbb{R}%
^{n}))\leq C\left\Vert g\right\Vert _{RH_{p}(w(x)dx)}K(t,g;L_{w}%
^{1}(\mathbb{R}^{n}),L^{\infty}(\mathbb{R}^{n}))
\]
which finally gives%
\begin{align}
&  K(t^{1-\theta},g;L_{w}^{p}(\mathbb{R}^{n}),L^{\infty}(\mathbb{R}%
^{n}))\label{conpeso}\\
&  \leq C\left\Vert g\right\Vert _{RH_{p}(w(x)dx)}t^{-\frac{\theta}{1-\theta}%
}K(t^{\frac{1}{1-\theta}},g;L_{w}^{1}(\mathbb{R}^{n}),L^{\infty}%
(\mathbb{R}^{n}))\\
&  =C\left\Vert g\right\Vert _{RH_{p}(w(x)dx)}t\frac{K(t^{\frac{1}{1-\theta}%
},g;L_{w}^{1}(\mathbb{R}^{n}),L^{\infty}(\mathbb{R}^{n}))}{t^{\frac
{1}{1-\theta}}}.\nonumber
\end{align}
Therefore, (\ref{Ktheta}) holds and we have: $g\in RH_{p}(w(x)dx)\Rightarrow
g\in RH_{\theta,p}(L_{w}^{1}(\mathbb{R}^{n}),L^{\infty}(\mathbb{R}^{n})).$ We
can localize this result using the corresponding local formula for the
$K-$functional (cf. \cite{MarM}): for all cubes $Q$%
\[
K(t^{1-\theta},g\chi_{Q};L_{w}^{p}(Q),L^{\infty}(Q))\leq C\left\Vert
g\right\Vert _{RH_{p}(w(x)dx)}t\frac{K(t^{\frac{1}{1-\theta}},g\chi_{Q}%
;L_{w}^{1}(Q),L^{\infty}(Q))}{t^{\frac{1}{1-\theta}}}.
\]
Using the Holmstedt's formula we can proceed with the analysis in the weighted
case as we did in the unweighted case. In fact, we obtain that if $g\in
RH_{p}(w(x)dx)$ then, for $0<t<w(Q)=\int_{Q}w(x)dx,$
\[
\int_{0}^{t}[K(s,g\chi_{Q};L_{w}^{1}(Q),L^{\infty}(Q))s^{-\theta}]^{p}%
\frac{ds}{s}\leq C[K(s,g\chi_{Q};L_{w}^{1}(Q),L^{\infty}(Q))s^{-\theta}]^{p}.
\]
In order to avoid repetitions we shall leave further details for the
interested reader and, in particular, refer to \cite{MarM} where also the
$K-$functional for the pair $(L_{w}^{1}\left(  Q\right)  ,L^{\infty}\left(
Q\right)  )$ is computed.

\section{Applications and comparison with results in the literature
\label{sec:Comparison}}

\subsection{Reverse H\"{o}lder inequalities in the setting of Lorentz
spaces\label{sec:lorentz}}

In this section we consider the class of weights $RH_{L(p,q)}$ that satisfy
$L(p,q)$ reverse H\"{o}lder's inequalities. Our main result here can be
summarized as follows%
\[
RH_{L(p,q)}=RH_{p},1<p\leq q.
\]

Let us recall some definitions.

\begin{definition}
Let $1<p<\infty,1\leq q<\infty.$We let $L\left(  p,q\right)  =\{f:\left\Vert
f\right\Vert _{L\left(  p,q\right)  }=\left(  \int_{0}^{\infty}f^{\ast\ast
}\left(  t\right)  ^{q}t^{\frac{q}{p}}\frac{dt}{t}\right)  ^{\frac{1}{q}%
}<\infty\}$.
\end{definition}

\begin{definition}
Let $1<p<\infty,1\leq q<\infty.$ We shall say that $w$ satisfies an $L(p,q)$
reverse H\"{o}lder Lorentz inequality$,$ and we shall write $w\in
RH_{L(p,q)},$ if and only if there exists a constant $C>0$ such that, for all
cubes $Q,$ we have%
\begin{equation}
\frac{\left\Vert w\chi_{Q}\right\Vert _{L(p,q)}}{\left\vert Q\right\vert
^{1/p}}\leq\frac{C}{\left\vert Q\right\vert }\int_{Q}w(x)dx. \label{p,q}%
\end{equation}

We let
\end{definition}

\[
\left\Vert w\right\Vert _{RH_{L(p,q)}}=\inf\{C:\text{(\ref{p,q}) holds}\}.
\]

\begin{theorem}
Let $1<p<\infty,p\leq q<\infty.$ Then,
\[
RH_{L(p,q)}=RH_{p}.
\]

\end{theorem}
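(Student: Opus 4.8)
The plan is to prove the two inclusions separately, with the easy containment $RH_{p}\subseteq RH_{L(p,q)}$ coming first and the substantive containment $RH_{L(p,q)}\subseteq RH_{p}$ (for $p\le q$) reduced to an index computation. For the easy direction, suppose $w\in RH_{p}$. Since $p\le q$, the Lorentz space $L(p,q)$ sits inside $L(p,p)=L^{p}$ with $\left\Vert f\right\Vert_{L(p,q)}\preceq\left\Vert f\right\Vert_{L^{p}}$ (this is the standard monotonicity of Lorentz norms in the second index, with constant depending only on $p,q$). Hence for every cube $Q$,
\[
\frac{\left\Vert w\chi_{Q}\right\Vert_{L(p,q)}}{\left\vert Q\right\vert^{1/p}}\preceq\frac{\left\Vert w\chi_{Q}\right\Vert_{L^{p}}}{\left\vert Q\right\vert^{1/p}}=\Big(\frac{1}{\left\vert Q\right\vert}\int_{Q}w^{p}\Big)^{1/p}\leq\left\Vert w\right\Vert_{RH_{p}}\frac{1}{\left\vert Q\right\vert}\int_{Q}w,
\]
which is exactly (\ref{p,q}); thus $w\in RH_{L(p,q)}$.

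For the reverse inclusion, the strategy outlined in the Introduction is to convert $w\in RH_{L(p,q)}$ into a $K$-functional inequality via the rearrangement inequalities of \cite{BMR2}, and then run the index machinery. Concretely, from (\ref{intro12}), (\ref{intro13}) and (\ref{intro14}) one gets, for $1<p\le q$ and all $t>0$,
\[
\frac{1}{t^{1/p}}K(t^{1/p},w\chi_{Q};L(p,q)(Q),L^{\infty}(Q))\leq C\left\Vert w\right\Vert_{RH_{L(p,q)}}\frac{K(t,w\chi_{Q};L^{1}(Q),L^{\infty}(Q))}{t},
\]
with $C$ independent of $Q$. The key point is that $L(p,q)=(L^{1},L^{\infty})_{1/p',q}$, so Holmstedt's formula (\ref{holm}) identifies the left-hand $K$-functional with $\big\{\int_{0}^{t}[s^{-1/p'}K(s,w\chi_{Q};L^{1}(Q),L^{\infty}(Q))]^{q}\,\frac{ds}{s}\big\}^{1/q}$, up to constants depending on $p,q$ but not on $Q$. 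Substituting and simplifying (exactly as in the passage from (\ref{holm2}) to (\ref{holm1})) yields, for all cubes $Q$ and all $0<t<\left\vert Q\right\vert$,
\[
\int_{0}^{t}\phi_{w,Q,1/p',q}(s)\,\frac{ds}{s}\leq C\,\phi_{w,Q,1/p',q}(t),
\]
where $\phi_{w,Q,1/p',q}(s)=\big(s^{-1/p'}K(s,w\chi_{Q};L^{1}(Q),L^{\infty}(Q))\big)^{q}$, with $C$ uniform in $Q$. By Proposition \ref{aero} this is equivalent to $ind\{\phi_{w,Q,1/p',q}\}_{Q}>0$, and then Proposition \ref{switch} upgrades this to $ind\{\phi_{w,Q}\}_{Q}>1/p'$. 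Finally Theorem \ref{teo:galvanizado}(i) gives $w\in RH_{p}$. Combining the two inclusions yields $RH_{L(p,q)}=RH_{p}$.

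The main obstacle I anticipate is justifying the rearrangement inequality (\ref{intro14}) — equivalently, checking that $w\in RH_{L(p,q)}$ (a condition phrased via $L(p,q)$ averages over cubes) really does translate into the pointwise-in-$t$ $K$-functional inequality above with a constant independent of $Q$. This requires the localized Herz-type / Stein-maximal-function rearrangement estimates of \cite{BMR2} and a check that the restriction-to-$Q$ operations behave well; the constraint $1<p\le q$ is exactly what makes the relevant maximal operator $M_{L(p,q)}$ satisfy the lower rearrangement bound. Once that translation is in hand, everything else is a mechanical application of Holmstedt's formula and the index propositions already proved. It is worth noting, as the Introduction remarks, that the second index $q$ disappears in the end — this is precisely the content of Remark \ref{introdespues} applied to the pair $(L^{1},L^{\infty})$ at the first index $1/p'$ — so the final answer $RH_{p}$ does not see $q$ at all.
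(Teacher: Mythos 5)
Your proposal follows essentially the same route as the paper: the easy inclusion via Lorentz-norm monotonicity, localization of the $RH_{L(p,q)}$ condition to a fixed cube, the lower rearrangement bound for the Stein maximal operator $M_{L(p,q)}$ from \cite{BMR2} combined with the Herz estimate, Holmstedt's formula, and then the index machinery. The one small difference is that you conclude explicitly through Proposition \ref{aero}, Proposition \ref{switch}, and Theorem \ref{teo:galvanizado}(i), whereas the paper writes ``applying Theorem \ref{teo:p}'' — which, since the intermediate $K$-functional inequality carries the second index $q$ rather than $p$, tacitly uses the index-independence observed in Remark \ref{introdespues}; your version makes that step transparent.
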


\begin{proof}
The containment $RH_{p}\subset RH_{L(p,q)}$ is automatic since $L^{p}\subset
L(p,q).$ Suppose that $w\in$ $RH_{L(p,q)}.$ Fix a cube $Q_{0}.$ Applying
(\ref{p,q}) to $w\chi_{Q_{0}}$, we get
\begin{equation}
M_{L(p,q),Q_{0}}(w\chi_{Q_{0}})(x):=\sup\limits_{Q_{0}\supset Q\ni x}%
\frac{\left\Vert (w\chi_{Q_{0}}\chi_{Q})\right\Vert _{L(p,q)}}{\left\vert
Q\right\vert ^{1/p}}\leq C\left\Vert w\right\Vert _{RH_{L(p,q)}}M(w\chi
_{Q_{0}})(x). \label{(p,q)2}%
\end{equation}
Then, taking rearrangements in (\ref{(p,q)2}), and using the familiar Herz
inequality to estimate the right hand side, yields
\[
(M_{L(p,q),Q_{0}}(w\chi_{Q_{0}}))^{\ast}(t)\leq\frac{C\left\Vert w\right\Vert
_{RH_{L(p,q)}}}{t}\int_{0}^{t}(w\chi_{Q_{0}})^{\ast}(s)ds,0<t<\left\vert
Q_{0}\right\vert .
\]
The left hand side can be estimated by the local version of an estimate
obtained in \cite[Corollary (i), page 69]{BMR2},%
\[
\frac{1}{t^{1/p}}\int_{0}^{t}[(w\chi_{Q_{0}})^{\ast}(s)s^{1/p}]^{q}\frac
{ds}{s}\leq c(M_{L(p,q),Q_{0}}w(x))^{\ast}(t),0<t<\left\vert Q_{0}\right\vert
.
\]
Combining these estimates we thus find that there exists a constant $C>0$,
such that for all $t>0,$%
\begin{equation}
\frac{1}{t^{1/p}}\int_{0}^{t}(w\chi_{Q_{0}})^{\ast}(s)^{q}s^{q/p}\frac{ds}%
{s}\leq C\frac{\left\Vert w\right\Vert _{RH_{L(p,q)}}}{t}\int_{0}^{t}%
(w\chi_{Q_{0}})^{\ast}(s)ds. \label{leida}%
\end{equation}
Now, recall that (cf. \cite{BL}, \cite{torchinsky})
\[
K(t,w;L(p,q),L^{\infty})\approx\left\{  \int_{0}^{t^{p}}[w^{\ast}%
(s)s^{1/p}]^{q}\frac{ds}{s}\right\}  ^{1/q},K(t,w;L^{1},L^{\infty})=\int%
_{0}^{t}w^{\ast}(s)ds.
\]
Let $\theta=1-1/p=1/p^{\prime},$ using Holmstedt's formula in a familiar way
we can rewrite (\ref{leida}) as%
\[
\frac{1}{t^{1-\theta}}K(t^{1-\theta},w\chi_{Q_{0}};(L^{1},L^{\infty}%
)_{\theta,q},L^{\infty})\leq C\frac{\left\Vert w\right\Vert _{RH_{L(p,q)}}}%
{t}K(t,w\chi_{Q_{0}};L^{1},L^{\infty}),
\]
\bigskip therefore, since $Q_{0}$ was arbitrary, a simple change of variables
shows that $w$ satisfies (\ref{Ktheta}). More precisely, we see that for all
cubes $Q,w\chi_{Q}\in RH_{1/p^{\prime},q}(L^{1}(Q),L^{\infty}(Q)),$ with%
\[
\sup_{Q}\left\Vert w\chi_{Q}\right\Vert _{RH_{1/p^{\prime},q}(L^{1}%
(Q),L^{\infty}(Q))}\preceq\left\Vert w\right\Vert _{RH_{L(p,q)}}.
\]
Consequently, applying Theorem \ref{teo:p}, we conclude that $w\in RH_{p}.$
\end{proof}

\begin{corollary}
Let $1<p\leq q<\infty.$ Then, $w\in$ $RH_{L(p,q)}$ if and only if there exists
a constant $C>0$ such that, for all cubes $Q,$ $0<t<\left\vert Q\right\vert ,$
we have
\[
\int_{0}^{t}[K(s,w\chi_{Q};L^{1}(Q),L^{\infty}(Q))s^{-1/p^{\prime}}]^{q}%
\frac{ds}{s}\leq C[t^{-1/p^{\prime}}K(s,w\chi_{Q};L^{1}(Q),L^{\infty}%
(Q))]^{q}.
\]

\end{corollary}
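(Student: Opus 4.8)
The plan is to read this off from three facts that are already in hand: the theorem just proved, which gives $RH_{L(p,q)}=RH_{p}$ for $1<p\le q<\infty$; the index characterization of $RH_{p}$ from Theorem \ref{teo:galvanizado}(i); and Proposition \ref{aero}, which converts positivity of the index of a family into exactly an integral inequality of the stated shape. No new estimate should be required; the corollary is a packaging of material from the previous sections.

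Concretely, I would argue as follows. Set $\phi_{w,Q,1/p^{\prime},q}(s)=\bigl(s^{-1/p^{\prime}}K(s,w\chi_{Q};L^{1}(Q),L^{\infty}(Q))\bigr)^{q}$, noting that $K(s,w\chi_{Q};L^{1},L^{\infty})$ and $K(s,w\chi_{Q};L^{1}(Q),L^{\infty}(Q))$ coincide for $0<s<|Q|$. By the preceding theorem, $w\in RH_{L(p,q)}$ iff $w\in RH_{p}$. By Theorem \ref{teo:galvanizado}(i), $w\in RH_{p}$ iff $ind\{\phi_{w,Q}\}_{Q}>1/p^{\prime}$; by Proposition \ref{switch} applied with $\beta=1/p^{\prime}$, together with Remark \ref{remark:remarkao} (raising to the power $q$ does not change whether the index is positive), this is equivalent to $ind\{\phi_{w,Q,1/p^{\prime},q}\}_{Q}>0$. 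Finally, Proposition \ref{aero} states that $ind\{\phi_{w,Q,1/p^{\prime},q}\}_{Q}>0$ holds iff there is a constant $C>0$, independent of $Q$, with $\int_{0}^{t}\phi_{w,Q,1/p^{\prime},q}(s)\,\frac{ds}{s}\le C\,\phi_{w,Q,1/p^{\prime},q}(t)$ for all $0<t<|Q|$, which is precisely the asserted inequality (reading the right-hand side as $[\,t^{-1/p^{\prime}}K(t,w\chi_{Q};L^{1}(Q),L^{\infty}(Q))]^{q}$). Chaining these equivalences completes the proof.

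There is also a more hands-on route that bypasses the index formalism and instead retraces the proof of the preceding theorem: starting from $w\in RH_{L(p,q)}$ one arrives at inequality (\ref{leida}), and Holmstedt's formula (\ref{holm}) with $\theta=1/p^{\prime}$ — equivalently the substitution $t\mapsto t^{p}=t^{1/(1-\theta)}$ — rewrites it as $w\chi_{Q}\in RH_{1/p^{\prime},q}(L^{1}(Q),L^{\infty}(Q))$ with constant uniform in $Q$; unraveling that $K$-functional inequality once more via Holmstedt yields the displayed integral inequality, and conversely setting $t=|Q|$ in the displayed inequality and invoking the identification $L(p,q)=(L^{1},L^{\infty})_{1/p^{\prime},q}$ recovers (\ref{p,q}). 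I do not expect a genuine obstacle here since the statement is a corollary; the only points needing care are bookkeeping — keeping every constant independent of $Q$ (which is ensured because Theorem \ref{teo:p}, Proposition \ref{aero}, and the Herz and \cite{BMR2} rearrangement inequalities are all uniform in $Q$) and correctly tracking the change of variables $t\mapsto t^{p}$ when passing between $\int_{0}^{t^{p}}[w^{\ast}(s)s^{1/p}]^{q}\frac{ds}{s}$ and $K(t,w;L(p,q),L^{\infty})^{q}$.
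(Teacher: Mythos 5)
Your first route is correct and is exactly the chain of equivalences the preceding machinery is set up to deliver: the theorem immediately above gives $RH_{L(p,q)}=RH_{p}$ for $1<p\le q<\infty$; Theorem \ref{teo:galvanizado}(i) gives $RH_{p}=\{w:ind\{\phi_{w,Q}\}_{Q}>1/p'\}$; Proposition \ref{switch} (whose first equivalence already subsumes Remark \ref{remark:remarkao}, so the separate citation is redundant but harmless) converts this to $ind\{\phi_{w,Q,1/p',q}\}_{Q}>0$; and Proposition \ref{aero} converts that index condition into the displayed integral inequality with a constant uniform in $Q$. The paper states the corollary without proof, but this is plainly the intended derivation, and you are right that the $s$ in the right-hand $K$-functional is a typo for $t$, which your reading silently corrects. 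The second, "hands-on" route you sketch (pass through inequality (\ref{leida}) and Holmstedt) is a longer way of re-deriving what the index formalism already packages; it is not wrong, but the first route is cleaner and matches the paper's design.
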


\subsection{Comparisons with recent results on $A_{\infty}$\label{sub:carro}}

In the recent paper \cite{ACKS}, which is apparently independent of the
literature on indices or the interpolation methods, as discussed in this
paper, the authors defined an index on weights which, among other interesting
applications, was used to characterize the Muckenhoupt class of $A_{\infty}$
weights. The purpose of this section is to compare the results of \cite{ACKS}
with ours.

Let us start by recalling the notion of index defined in \cite{ACKS}. We shall
say a weight $w$ has finite index, in the sense of \cite{ACKS}, if there exist
$r\in\left(  0,1\right)  $ and $\lambda,\widetilde{\gamma}>0$ such that for
all cubes $Q$, $0<s\leq t<r\left\vert Q\right\vert ,$ it holds%
\begin{equation}
\frac{(w\chi_{Q})^{\ast}\left(  s\right)  }{(w\chi_{Q})^{\ast}\left(
t\right)  }\leq\widetilde{\gamma}\left(  \frac{s}{t}\right)  ^{-\lambda}.
\label{acks1}%
\end{equation}
In \cite{ACKS} the authors then let%
\[
\widetilde{ind}(w)=\inf\{\lambda:\text{(\ref{acks1}) holds}\},
\]
and show that%
\[
A_{\infty}=\{w:\widetilde{ind}(w)<1\}.
\]
For comparison we note that since%
\[
A_{\infty}=RH
\]
from Theorem \ref{teo:galvanizado} (ii) we obtain%
\[
A_{\infty}=\{w:ind\{K(\cdot,w\chi_{Q},L^{1}(Q),L^{\infty}(Q))\}_{Q}>0\}.
\]
In this section we compare and clarify these results by means of a direct
proof of

\begin{theorem}
\label{teo:delossuecos}
\begin{equation}
ind\{K(\cdot,w\chi_{Q},L^{1}(Q),L^{\infty}(Q))\}_{Q}>0\Leftrightarrow
\widetilde{ind}(w)<1. \label{acks2}%
\end{equation}

\end{theorem}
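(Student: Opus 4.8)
The plan is to prove both implications by first rewriting each index condition as a uniform almost-increasing statement about a power-weighted rearrangement, and then linking the two through the uniform local comparison $(w\chi_{Q})^{\ast\ast}(s)\approx(w\chi_{Q})^{\ast}(s)$. Write $\phi_{w,Q}(s)=K(s,w\chi_{Q};L^{1}(Q),L^{\infty}(Q))=\int_{0}^{s}(w\chi_{Q})^{\ast}(u)\,du=s\,(w\chi_{Q})^{\ast\ast}(s)$, so that $s^{-\delta}\phi_{w,Q}(s)=s^{1-\delta}(w\chi_{Q})^{\ast\ast}(s)$. By Definition \ref{def:estrela}, the left side of (\ref{acks2}) says exactly that there are $\delta>0$ and $\gamma\in(0,1)$ with $s^{1-\delta}(w\chi_{Q})^{\ast\ast}(s)$ almost increasing on $(0,\gamma\left\vert Q\right\vert )$, uniformly in $Q$; and, after rewriting (\ref{acks1}) as $s^{\lambda}(w\chi_{Q})^{\ast}(s)\le\widetilde{\gamma}\,t^{\lambda}(w\chi_{Q})^{\ast}(t)$ for $0<s\le t<r\left\vert Q\right\vert $, the right side of (\ref{acks2}) says exactly that there are $\lambda\in(0,1)$ and $r\in(0,1)$ with $s^{\lambda}(w\chi_{Q})^{\ast}(s)$ almost increasing on $(0,r\left\vert Q\right\vert )$, uniformly in $Q$. (Since $w>0$ a.e., $(w\chi_{Q})^{\ast}>0$ on $(0,\left\vert Q\right\vert )$, so no quotient degenerates.) Throughout I will use the trivial bound $(w\chi_{Q})^{\ast}\le(w\chi_{Q})^{\ast\ast}$ and the elementary fact that a function comparable to an almost increasing function is itself almost increasing, with a controlled constant.

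For the direction $\widetilde{ind}(w)<1\Rightarrow ind\{\phi_{w,Q}\}_{Q}>0$, I would start from $\lambda\in(0,1)$, $r$ and $\widetilde{\gamma}\ge1$ as above and integrate: for $0<s<r\left\vert Q\right\vert $ one has $\phi_{w,Q}(s)=\int_{0}^{s}u^{-\lambda}\bigl(u^{\lambda}(w\chi_{Q})^{\ast}(u)\bigr)\,du\le\widetilde{\gamma}\,s^{\lambda}(w\chi_{Q})^{\ast}(s)\int_{0}^{s}u^{-\lambda}\,du=\tfrac{\widetilde{\gamma}}{1-\lambda}\,s\,(w\chi_{Q})^{\ast}(s)$, hence $(w\chi_{Q})^{\ast\ast}(s)=\phi_{w,Q}(s)/s\le\tfrac{\widetilde{\gamma}}{1-\lambda}(w\chi_{Q})^{\ast}(s)$. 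Combined with $(w\chi_{Q})^{\ast}\le(w\chi_{Q})^{\ast\ast}$ this gives $(w\chi_{Q})^{\ast\ast}\approx(w\chi_{Q})^{\ast}$ on $(0,r\left\vert Q\right\vert )$ uniformly in $Q$, so $s^{-(1-\lambda)}\phi_{w,Q}(s)=s^{\lambda}(w\chi_{Q})^{\ast\ast}(s)$ is comparable to the almost increasing function $s^{\lambda}(w\chi_{Q})^{\ast}(s)$ and hence almost increasing on $(0,r\left\vert Q\right\vert )$ with a constant independent of $Q$; since $1-\lambda>0$, Definition \ref{def:estrela} yields $ind\{\phi_{w,Q}\}_{Q}\ge1-\lambda>0$.

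For the converse $ind\{\phi_{w,Q}\}_{Q}>0\Rightarrow\widetilde{ind}(w)<1$, I would take $\delta\in(0,1)$ (shrinking $\delta$ if necessary, which is harmless since multiplying by the increasing factor $s^{\delta-\delta'}$ preserves almost increase), together with $\gamma\in(0,1)$ and $A\ge1$ such that $\psi_{Q}(s):=s^{1-\delta}(w\chi_{Q})^{\ast\ast}(s)$ is almost increasing with constant $A$ on $(0,\gamma\left\vert Q\right\vert )$, uniformly. The heart of the matter is again to obtain $(w\chi_{Q})^{\ast\ast}\approx(w\chi_{Q})^{\ast}$ locally, uniformly. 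Fix $\Lambda:=(2A)^{1/\delta}>1$; for $s$ with $\Lambda s<\gamma\left\vert Q\right\vert $ the almost increase of $\psi_{Q}$ forces $(w\chi_{Q})^{\ast\ast}(\Lambda s)\ge(A\Lambda^{1-\delta})^{-1}(w\chi_{Q})^{\ast\ast}(s)$, i.e. $\phi_{w,Q}(\Lambda s)\ge A^{-1}\Lambda^{\delta}\phi_{w,Q}(s)=2\phi_{w,Q}(s)$. Hence $\phi_{w,Q}(s)\le\phi_{w,Q}(\Lambda s)-\phi_{w,Q}(s)=\int_{s}^{\Lambda s}(w\chi_{Q})^{\ast}(u)\,du\le(\Lambda-1)\,s\,(w\chi_{Q})^{\ast}(s)$ by monotonicity of $(w\chi_{Q})^{\ast}$, so $(w\chi_{Q})^{\ast\ast}(s)\le(\Lambda-1)(w\chi_{Q})^{\ast}(s)$ for all $s<r\left\vert Q\right\vert $ with $r:=\gamma/\Lambda$. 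Thus $(w\chi_{Q})^{\ast\ast}\approx(w\chi_{Q})^{\ast}$ on $(0,r\left\vert Q\right\vert )$ uniformly, so $s^{1-\delta}(w\chi_{Q})^{\ast}(s)$ is comparable to $\psi_{Q}$ there and therefore almost increasing with a constant independent of $Q$; rewriting this as $(w\chi_{Q})^{\ast}(s)/(w\chi_{Q})^{\ast}(t)\le C\,(s/t)^{-(1-\delta)}$ for $0<s\le t<r\left\vert Q\right\vert $ shows that (\ref{acks1}) holds with exponent $1-\delta<1$, whence $\widetilde{ind}(w)\le1-\delta<1$.

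The step I expect to be the main obstacle is the forward sub-step producing the uniform local equivalence $(w\chi_{Q})^{\ast\ast}\approx(w\chi_{Q})^{\ast}$: the naive dilation $\Lambda=2$ does not suffice, since $\phi_{w,Q}(2s)\ge2\phi_{w,Q}(s)$ can fail (for instance when $(w\chi_{Q})^{\ast}$ vanishes on a dyadic block), so one must exploit the genuine power gain $\Lambda^{\delta}$ afforded by the strict positivity of the index and take $\Lambda$ large depending only on $A$ and $\delta$. Everything else is bookkeeping with nested subintervals and constants, all kept uniform in $Q$. As a sanity check, (\ref{acks2}) also follows indirectly by combining Theorem \ref{teo:galvanizado}(ii), the identity $A_{\infty}=RH$, and the characterization $A_{\infty}=\{w:\widetilde{ind}(w)<1\}$ of \cite{ACKS}; the point of the direct argument sketched above is to make the dictionary between the two indices transparent.
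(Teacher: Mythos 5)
Your proof is correct, and in the harder direction it is genuinely different from the paper's. For $\widetilde{ind}(w)<1\Rightarrow ind\{\phi_{w,Q}\}_{Q}>0$ both you and the paper integrate the almost-increase of $s^{1-\delta}(w\chi_{Q})^{\ast}(s)$ to control $\phi_{w,Q}(s)/s$; the only cosmetic difference is that you organize the output as a uniform local equivalence $(w\chi_{Q})^{\ast\ast}\approx(w\chi_{Q})^{\ast}$ before transferring the a.i. property, whereas the paper writes the same estimate directly as an a.i. statement about $\phi_{w,Q}(t)t^{-\delta}$.

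For the converse $ind\{\phi_{w,Q}\}_{Q}>0\Rightarrow\widetilde{ind}(w)<1$ the routes diverge. The paper invokes the full Gehring machinery it has built up: it applies Theorem \ref{teo:galvanizado} and Lemma \ref{benson} to upgrade the index hypothesis to a uniform local $L^{p}$-reverse H\"older inequality for $(w\chi_{Q})^{\ast}$, and then extracts $(w\chi_{Q})^{\ast\ast}\lesssim(w\chi_{Q})^{\ast}$ through a H\"older-plus-dilation absorption argument with a carefully chosen $\rho$. You instead work directly with the a.i. condition on $\psi_{Q}(s)=s^{1-\delta}(w\chi_{Q})^{\ast\ast}(s)$, choosing $\Lambda=(2A)^{1/\delta}$ so that $\phi_{w,Q}(\Lambda s)\geq 2\phi_{w,Q}(s)$, and then bounding $\phi_{w,Q}(s)\leq\int_{s}^{\Lambda s}(w\chi_{Q})^{\ast}\leq(\Lambda-1)s(w\chi_{Q})^{\ast}(s)$ by monotonicity; this delivers the same equivalence $(w\chi_{Q})^{\ast\ast}\approx(w\chi_{Q})^{\ast}$ on $(0,\gamma|Q|/\Lambda)$ uniformly in $Q$, after which the transfer to (\ref{acks1}) is the same bookkeeping as in Remark \ref{lonsome}. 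Your argument is more elementary and self-contained: it does not rely on the self-improvement (Gehring) theorem or on the reformulation through $RH_{p}$, only on Definition \ref{def:estrela} and the trivial inequalities $(w\chi_{Q})^{\ast}\leq(w\chi_{Q})^{\ast\ast}$ and $\int_{s}^{\Lambda s}(w\chi_{Q})^{\ast}\leq(\Lambda-1)s(w\chi_{Q})^{\ast}(s)$. The paper's version, by contrast, illustrates how the machinery of the paper can be re-used, at the cost of circularity in spirit (though not in logic). You also correctly flag the key obstruction that a naive dilation $\Lambda=2$ would not suffice and that one must exploit the power gain $\Lambda^{\delta}$; the paper's choice of $\rho$ plays the analogous quantitative role.
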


\begin{remark}
\label{lonsome}Before going through the proof of (\ref{acks2}), let us observe
that we can rewrite the condition (\ref{acks1}) as follows: for all cubes $Q$,
$0<s\leq t<r\left\vert Q\right\vert ,$ it holds that
\[
s(w\chi_{Q})^{\ast}\left(  s\right)  s^{-(1-\lambda)}\leq\widetilde{\gamma
}t(w\chi_{Q})^{\ast}\left(  t\right)  t^{-(1-\lambda)}.
\]
In other words, the function $x(w\chi_{Q})^{\ast}\left(  x\right)
x^{-(1-\lambda)}$ is a.i. on $(0,r\left\vert Q\right\vert ),$ with constant of
a.i. $\widetilde{\gamma}$ independent of $Q.$ Therefore we readily see that%
\begin{align*}
\widetilde{ind}(w) &  =\sup_{\delta>0}\{\delta>0:\exists\widetilde{\gamma
}>0,r\in(0,1)\text{ such that for all cubes }Q,\text{ }x(w\chi_{Q})^{\ast
}\left(  x\right)  x^{-\delta}\text{ }\\
&  \text{is a.i. on }(0,r\left\vert Q\right\vert )\text{ with }%
\widetilde{\gamma}\text{ constant of a.i.}\}.
\end{align*}
For comparison, in Definition \ref{def:estrela} our index, $ind,$ was defined
using functions built around the family of functions $\phi_{w,Q}%
(t)=K(t,w\chi_{Q},L^{1}(Q),L^{\infty}(Q))=\int_{0}^{t}(w\chi_{Q})^{\ast
}(s)ds,$ and we made some (minimal) use of the fact that $\phi_{w,Q}(t)$
increases and $\frac{\phi_{w,Q}(t)}{t}$ decreases. But if we formally apply
our definition to the family of functions $\{t\phi_{w,Q}^{\prime}%
(t)\}_{Q}=\{t$ $(w\chi_{Q})^{\ast}(t)\}_{Q}$ we see that formally we have
\[
\widetilde{ind}(w)=ind\{t\phi_{w,Q}^{\prime}(t)\}_{Q}\}=ind\{t(w\chi
_{Q})^{\ast}(t)\}_{Q}.
\]

\end{remark}

We are now ready for the \textbf{Proof of Theorem \ref{teo:delossuecos}}

\begin{proof}
Suppose that $\widetilde{ind}(w)<1.$ Then there exists $\exists\delta
>0,\gamma\in(0,1)$ such that for all cubes, $t\phi_{w,Q}^{\prime}%
(t)t^{-\delta}$ is a.i. on $(0,\gamma\left\vert Q\right\vert ).$ It follows
that for any $0<t<h<\gamma\left\vert Q\right\vert ,$ we have%
\begin{align*}
\phi_{w,Q}(t)t^{-\delta}  &  =t^{-\delta}\int_{0}^{t}\phi_{w,Q}^{\prime
}(s)s^{1-\delta}s^{\delta-1}ds\\
&  \leq t^{-\delta}\phi_{w,Q}^{\prime}(t)t^{1-\delta}\frac{t^{\delta}}{\delta
}\\
&  \leq\frac{C}{\delta}\phi_{w,Q}^{\prime}(h)h^{1-\delta}\\
&  =\frac{C}{\delta}h^{-\delta}\left(  \phi_{w,Q}^{\prime}(h)h\right) \\
&  \leq\frac{C}{\delta}h^{-\delta}\int_{0}^{h}\phi_{w,Q}^{\prime}(r)dr\\
&  =\frac{C}{\delta}h^{-\delta}\phi_{w,Q}(h).
\end{align*}
Therefore for all cubes $Q,$ $t^{-\delta}\phi_{w,Q}(t)$ is a.i. on
$(0,\gamma\left\vert Q\right\vert ).$ Thus, $ind\{\phi_{w,Q}\}_{Q}>0.$

Conversely, if $ind\{\phi_{w,Q}\}_{Q}>0,$ then by Theorem
\ref{teo:galvanizado} and Lemma \ref{benson} there exists $p>1,$ $C>0,$ such
that for $t\in\left(  0,\left\vert Q\right\vert \right)  $%
\[
\left\{  \frac{1}{t}\int_{0}^{t}[(w\chi_{Q})^{\ast\ast}(s)]^{p}ds\right\}
^{1/p}\leq C\left\{  \frac{1}{t}\int_{0}^{t}(w\chi_{Q})^{\ast}(s)ds\right\}
,
\]
which implies%
\begin{equation}
\left\{  \int_{0}^{t}[(w\chi_{Q})^{\ast}(s)]^{p}ds\right\}  ^{1/p}\leq
C\left\{  t^{-1/p^{\prime}}\int_{0}^{t}(w\chi_{Q})^{\ast}(s)ds\right\}  .
\label{lu}%
\end{equation}
Let $\rho>1$ be a number will be chosen precisely later. Then, for
$t\in\left(  0,\frac{\left\vert Q\right\vert }{\rho}\right)  ,$ we have%
\begin{align*}
\int_{0}^{t}(w\chi_{Q})^{\ast}(s)ds  &  \leq\left\{  \int_{0}^{t}[(w\chi
_{Q})^{\ast}(s)]^{p}ds\right\}  ^{1/p}t^{1/p^{\prime}}\\
&  \leq\left\{  \int_{0}^{t\rho}[(w\chi_{Q})^{\ast}(s)]^{p}ds\right\}
^{1/p}t^{1/p^{\prime}}\\
&  \leq C\left\{  (t\rho)^{-1/p^{\prime}}\int_{0}^{\rho t}(w\chi_{Q})^{\ast
}(s)ds\right\}  t^{1/p^{\prime}}\text{ (by (\ref{lu}))}\\
&  =C\rho^{-1/p^{\prime}}\int_{0}^{t}(w\chi_{Q})^{\ast}(s)ds+C\rho
^{-1/p^{\prime}}\int_{t}^{\rho t}(w\chi_{Q})^{\ast}(s)ds
\end{align*}
Rearranging terms, and using the fact that $(w\chi_{Q})^{\ast}$ is decreasing,
we find%
\[
(1-C\rho^{-1/p^{\prime}})\int_{0}^{t}(w\chi_{Q})^{\ast}(s)ds\leq
C\rho^{-1/p^{\prime}}(\rho-1)t(w\chi_{Q})^{\ast}(t).
\]
Therefore if we choose $\rho>1$ such that $C\rho^{-1/p^{\prime}}<1$ and use
once again the fact that $(w\chi_{Q})^{\ast}$ is decreasing, we obtain that on
$\left(  0,\frac{\left\vert Q\right\vert }{\rho}\right)  ,$
\begin{equation}
t(w\chi_{Q})^{\ast}(t)\leq\int_{0}^{t}(w\chi_{Q})^{\ast}(s)ds\leq\frac
{C\rho^{-1/p^{\prime}}(\rho-1)}{(1-C\rho^{-1/p^{\prime}})}t(w\chi_{Q})^{\ast
}(t). \label{demandada}%
\end{equation}
Now, since $ind\{\phi_{w,Q}\}_{Q}>0$, there exists $\delta>0$,$\gamma\in(0,1)$
such that $t^{-\delta}\int_{0}^{t}(w\chi_{Q})^{\ast}(s)ds$ is a.i. on
$(0,\gamma\left\vert Q\right\vert ).$ Consequently, if we further demand that
$\rho>1/\gamma,$ we see that (\ref{demandada}) implies that $t^{-\delta
}t(w\chi_{Q})^{\ast}(t)$ is a.i. and therefore by Remark \ref{lonsome} we find
that
\[
\widetilde{ind}(w)<1,
\]
as we wished to show.
\end{proof}

\begin{remark}
In retrospect it is interesting to observe that while the theory of
\cite{ACKS} was apparently developed independently from theory of indices, and
interpolation theory, one of the first results obtained in \cite{ACKS} is the
control of integrals of the form $\int_{0}^{t}f\left(  t\right)  dt$ by
$tf\left(  t\right)  ,$ where $f$ is decreasing.
\end{remark}

\subsection{An interpolation theorem involving extrapolation spaces: Operators
acting on $RH$ weights.\label{extrapola}}

Another interesting way of characterizing $A_{\infty},$ apparently first given
by Fujii (cf. \cite{F}, \cite{DMO} and the references therein), can be stated
as follows: $w$ belongs to the $A_{\infty}$ class if and only if there exists
a constant $C$ such that for all cubes $Q,$%
\begin{equation}
\int_{Q}M(w\chi_{Q})(x)dx\leq C\int_{Q}w(x)dx.\label{re2}%
\end{equation}

We investigate the connection of (\ref{re2}) with our own characterization of
$A_{\infty}$ using interpolation. More precisely, in this section we prove an
abstract interpolation theorem modelled after a result obtained in
\cite{ACKS}\footnote{On closer examination one can see that the result is
closely related to an extrapolation version of a theorem due Zygmund (cf.
\cite{as}, \cite{Go}, \cite{JM91} and the discussion in Remark \ref{re:jm}
below).}, that when applied to the maximal operator shows that if $w\in RH$
then $w$ satisfies the Fujii condition (\ref{re2}).

Let $\vec{X}$ be an ordered Banach pair. We recall the definition of
$\left\Vert \cdot\right\Vert _{RH_{0,1}(\vec{X})}$ that we introduced in
Definition \ref{interpola},%
\begin{equation}
\left\Vert w\right\Vert _{RH_{0,1}(\vec{X})}=\inf\{c:\int_{0}^{t}%
K(s,w;;\vec{X})\frac{ds}{s}\leq cK(t,w;\vec{X})\}. \label{tres}%
\end{equation}

Let us also recall the definition of the notion of \textquotedblleft
generalized weak types $(1,1),(\infty,\infty)^{\prime\prime}$ as given in
\cite{De}. We shall say that $T$ is of generalized weak types $(1,1),(\infty
,\infty),$ if there exists a constant $C>0$ such that%
\begin{equation}
\frac{K(r,Tf;\vec{X})}{r}\leq C\{\frac{1}{r}\int_{0}^{r}K(s,f;\vec{X}%
)\frac{ds}{s}+\int_{r}^{\infty}\frac{K(s,f;\vec{X})}{s}\frac{ds}{s}\},r>0.
\label{two}%
\end{equation}

\begin{theorem}
\label{loextrapola}Let $\vec{X}$ be an ordered Banach pair, and let $n$ be the
norm of the embedding $X_{1}\subset X_{0}.$ Let $T$ be an operator of
generalized weak types $(1,1),(\infty,\infty).$ Then, there exists an absolute
constant $c>0,$ such that%
\begin{equation}
\int_{0}^{t}\frac{K(r,Tw;\vec{X})}{r}dr\leq c\left\Vert w\right\Vert _{A_{0}%
}(\left\Vert w\right\Vert _{RH_{0,1}(\vec{X})}^{2}+\left\Vert w\right\Vert
_{RH_{0,1}(\vec{X})}+1),0<t<n, \label{cinco}%
\end{equation}%
\begin{equation}
\left\Vert Tf\right\Vert _{\vec{X}_{0,1}}\leq c\left\Vert w\right\Vert
_{A_{0}}(\left\Vert w\right\Vert _{RH_{0,1}(\vec{X})}^{2}+1+\left\Vert
w\right\Vert _{RH_{0,1}(\vec{X})}). \label{cuatro}%
\end{equation}

\end{theorem}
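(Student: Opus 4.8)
The plan is to apply the hypothesis (\ref{two}) to $f=w$, integrate it in $r$ over $(0,t)$, and then control the two resulting double integrals: the first through the reverse inequality of Definition \ref{interpola}, the second through a crude bound only. We may assume $w\in RH_{0,1}(\vec{X})$, since otherwise $\Vert w\Vert_{RH_{0,1}(\vec{X})}=\infty$ and the statement is vacuous; in particular $w\in X_{0}$, $K(\cdot,w;\vec{X})$ is bounded, and $\int_{0}^{t}K(s,w;\vec{X})\frac{ds}{s}<\infty$ for $t<n$. Integrating (\ref{two}) gives, for $0<t<n$,
\[
\int_{0}^{t}\frac{K(r,Tw;\vec{X})}{r}\,dr\le C\,(I)+C\,(II),\qquad (I)=\int_{0}^{t}\frac{1}{r}\Big(\int_{0}^{r}K(s,w;\vec{X})\tfrac{ds}{s}\Big)dr,\qquad (II)=\int_{0}^{t}\Big(\int_{r}^{\infty}\tfrac{K(s,w;\vec{X})}{s}\tfrac{ds}{s}\Big)dr,
\]
where $C$ is the constant in (\ref{two}).

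For $(I)$ I would use that, since $0<r<t<n$, Definition \ref{interpola} yields $\int_{0}^{r}K(s,w;\vec{X})\frac{ds}{s}\le \Vert w\Vert_{RH_{0,1}(\vec{X})}K(r,w;\vec{X})$, hence $(I)\le \Vert w\Vert_{RH_{0,1}(\vec{X})}\int_{0}^{t}\frac{K(r,w;\vec{X})}{r}\,dr\le \Vert w\Vert_{RH_{0,1}(\vec{X})}\Vert w\Vert_{\vec{X}_{0,1}}$. For $(II)$ I would apply Tonelli's theorem to rewrite it as $\int_{0}^{\infty}\frac{K(s,w;\vec{X})}{s^{2}}\min(s,t)\,ds=\int_{0}^{t}K(s,w;\vec{X})\frac{ds}{s}+t\int_{t}^{\infty}\frac{K(s,w;\vec{X})}{s^{2}}\,ds$; the first summand is $\le \Vert w\Vert_{\vec{X}_{0,1}}$, and for the second I would use only the trivial bound $K(s,w;\vec{X})\le \Vert w\Vert_{A_{0}}$ (take $x_{0}=0$, $x_{1}=w$ in (\ref{forK})), which gives $t\int_{t}^{\infty}s^{-2}\,ds\cdot\Vert w\Vert_{A_{0}}=\Vert w\Vert_{A_{0}}$; thus $(II)\le \Vert w\Vert_{\vec{X}_{0,1}}+\Vert w\Vert_{A_{0}}$. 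The point of the crude bound in $(II)$, rather than invoking $RH_{0,1}$ a second time, is to keep the power of $\Vert w\Vert_{RH_{0,1}(\vec{X})}$ from growing and to avoid a constant degenerating as $\Vert w\Vert_{RH_{0,1}(\vec{X})}\downarrow 1$.

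The remaining ingredient is the comparison $\Vert w\Vert_{\vec{X}_{0,1}}\le \Vert w\Vert_{RH_{0,1}(\vec{X})}\Vert w\Vert_{A_{0}}$, obtained by letting $t\uparrow n$ in the inequality of Definition \ref{interpola} and using $K(n,w;\vec{X})\le \Vert w\Vert_{A_{0}}$ once more. Substituting this into the estimates for $(I)$ and $(II)$ gives
\[
\int_{0}^{t}\frac{K(r,Tw;\vec{X})}{r}\,dr\le C\big(\Vert w\Vert_{RH_{0,1}(\vec{X})}^{2}+\Vert w\Vert_{RH_{0,1}(\vec{X})}+1\big)\Vert w\Vert_{A_{0}},\qquad 0<t<n,
\]
which is (\ref{cinco}) with $c=C$; and letting $t\uparrow n$, monotone convergence identifies the left-hand side with $\int_{0}^{n}\frac{K(r,Tw;\vec{X})}{r}\,dr=\Vert Tw\Vert_{\vec{X}_{0,1}}$, which gives (\ref{cuatro}).

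I do not expect a genuine obstacle: the argument reduces to Tonelli's theorem, the defining inequality of $RH_{0,1}(\vec{X})$, and the trivial bound $K\le\Vert w\Vert_{A_{0}}$. The only points requiring care are the bookkeeping of the exponents of $\Vert w\Vert_{RH_{0,1}(\vec{X})}$ — one must resist using the reverse inequality inside the tail term $(II)$, so that the final constant comes out exactly as $\Vert w\Vert_{RH_{0,1}(\vec{X})}^{2}+\Vert w\Vert_{RH_{0,1}(\vec{X})}+1$ — and checking that all integrals in play are finite, which is built into the assumption $w\in RH_{0,1}(\vec{X})$.
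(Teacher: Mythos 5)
Your proof is correct and follows essentially the same approach as the paper: integrate the generalized weak type inequality (\ref{two}), split into the two terms $(I)$ and $(II)$, invoke the $RH_{0,1}$ inequality twice on $(I)$, use it once together with the trivial bound $K(s,w;\vec{X})\le\Vert w\Vert_{A_{0}}$ on $(II)$, and let $t\uparrow n$. The only cosmetic difference is that you handle $(II)$ by Tonelli's theorem, whereas the paper integrates by parts; these give the identical decomposition $\int_{0}^{t}K(s,w;\vec{X})\frac{ds}{s}+t\int_{t}^{\infty}\frac{K(s,w;\vec{X})}{s}\frac{ds}{s}$, so nothing substantive changes.
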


\begin{proof}
Let $w\in RH_{0,1}(\vec{X}).$ Integrating (\ref{two}) we obtain,%
\begin{align*}
\int_{0}^{t}\frac{K(r,Tf;\vec{X})}{r}dr  &  \preceq\int_{0}^{t}\frac{1}{r}%
\int_{0}^{r}K(s,w;\vec{X})\frac{ds}{s}dr+\int_{0}^{t}\int_{r}^{\infty}%
\frac{K(s,w;\vec{X})}{s}\frac{ds}{s}dr\\
&  =(I)+(II).
\end{align*}
Using (\ref{tres}) (twice) we find%
\begin{align*}
(I)  &  \preceq\int_{0}^{t}\frac{1}{r}\left\Vert w\right\Vert _{RH_{0,1}%
(\vec{X})}K(r,w;\vec{X})dr\\
&  \preceq\left\Vert w\right\Vert _{RH_{0,1}(\vec{X})}^{2}K(t,w;\vec{X})\\
&  \preceq\left\Vert w\right\Vert _{RH_{0,1}(\vec{X})}^{2}\left\Vert
w\right\Vert _{A_{0}}.
\end{align*}
To estimate $(II)$ we integrate by parts. For this purpose note that
$K(s,w;\vec{X})\leq\lim_{s\rightarrow\infty}K(s,w;\vec{X})=\left\Vert
w\right\Vert _{A_{0}},$ and therefore
\[
\lim_{r\rightarrow0}(r\int_{r}^{\infty}\frac{K(s,w;\vec{X})}{s}\frac{ds}%
{s})\leq\lim_{r\rightarrow0}(r\left\Vert w\right\Vert _{A_{0}}\int_{r}%
^{\infty}\frac{ds}{s^{2}})=\left\Vert w\right\Vert _{A_{0}}.
\]
Consequently, we get
\begin{align*}
(II)  &  \preceq\left.  r\int_{r}^{\infty}\frac{K(s,w;\vec{X})}{s}\frac{ds}%
{s}\right\vert _{0}^{t}+\int_{0}^{t}\frac{K(r,w;\vec{X})}{r}dr\\
&  \preceq t\int_{t}^{\infty}\frac{K(s,w;\vec{X})}{s}\frac{ds}{s}+\left\Vert
w\right\Vert _{RH_{0,1}(\vec{X})}K(t,w;\vec{X})\\
&  \preceq\left\Vert w\right\Vert _{A_{0}}+\left\Vert w\right\Vert
_{RH_{0,1}(\vec{X})}\left\Vert w\right\Vert _{A_{0}}.
\end{align*}
Combining estimates yields,%
\begin{equation}
\int_{0}^{t}\frac{K(r,Tw;\vec{X})}{r}dr\preceq\left\Vert w\right\Vert _{A_{0}%
}(\left\Vert w\right\Vert _{RH_{0,1}(\vec{X})}^{2}+\left\Vert w\right\Vert
_{RH_{0,1}(\vec{X})}+1). \label{laT}%
\end{equation}
Letting $t\rightarrow n$ we then obtain (\ref{cuatro}).
\end{proof}

We apply this result to the family of pairs $\vec{X}=(L^{1}(Q),L^{\infty
}(Q)),$ and the maximal operator $M$. Indeed, as it is well known, the maximal
operator $M$ satisfies (\ref{two}). For the benefit of the reader we offer a
quick verification here using the familiar Herz's equivalence. Indeed, we
have
\[
\frac{K(r,Mf,\vec{X})}{r}=(Mf)^{\ast\ast}(r)=\frac{1}{r}\int_{0}^{r}%
(Mf)^{\ast}(s)ds\approx\frac{1}{r}\int_{0}^{r}f^{\ast\ast}(s)ds=\frac{1}%
{r}\int_{0}^{r}K(s,f,\vec{X})\frac{ds}{s}.
\]
Suppose now that $w\in RH_{LLogL},$ then, by Theorem \ref{teo:limite}, we have
that, for all cubes $Q,$ $w\in RH_{0,1}(L^{1}(Q),L^{\infty}(Q)),$ and
\[
\sup_{Q}\left\Vert w\right\Vert _{RH_{0,1}(L^{1}(Q),L^{\infty}(Q))}%
\approx\left\Vert w\right\Vert _{RH_{LLogL}}.
\]
If we apply (\ref{laT}) with $t=\left\vert Q\right\vert ,$ we have%
\[
\int_{0}^{\left\vert Q\right\vert }\frac{K(r,M(w\chi_{Q});\vec{X})}%
{r}dr\preceq\left\Vert w\right\Vert _{L^{1}(Q)}(\left\Vert w\right\Vert
_{RH_{LLogL}}^{2}+\left\Vert w\right\Vert _{RH_{LLogL}}+1).
\]
Now, we observe that%
\begin{align}
\int_{Q}M(w\chi_{Q})(x)dx  &  =\int_{0}^{\left\vert Q\right\vert }[M(w\chi
_{Q})]^{\ast}(r)dr\nonumber\\
&  \leq\int_{0}^{\left\vert Q\right\vert }\frac{K(r,M(w\chi_{Q});\vec{X})}%
{r}dr.\nonumber
\end{align}
Combining these estimates we obtain
\begin{align}
\int_{Q}M(w\chi_{Q})(x)dx  &  \preceq\left\Vert w\right\Vert _{L^{1}%
(Q)}(\left\Vert w\right\Vert _{RH_{LLogL}}^{2}+\left\Vert w\right\Vert
_{RH_{LLogL}}+1)\nonumber\\
&  =\int_{Q}w(x)dx(\left\Vert w\right\Vert _{RH_{LLogL}}^{2}+\left\Vert
w\right\Vert _{RH_{LLogL}}+1) \label{fuji}%
\end{align}

This shows that our characterization of $RH_{LLogL}=RH=A_{\infty},$ implies
Fujii's condition.

In the next remark we show directly how the condition (\ref{re2}) implies the
defining condition of $RH_{LLogL}.$

\begin{remark}
\label{sec:fuji}Suppose that $w$ satisfies Fujii's condition (\ref{re2}). Let
$x\in R^{n},$ then for any cube $x\in Q$ we have (cf. the argument in
\cite[pag 174]{Perez})%
\begin{align*}
\frac{1}{\left\vert Q\right\vert }\int_{Q}M(w)(y)dy &  \leq\frac{1}{\left\vert
Q\right\vert }\int_{Q}M(w\chi_{3Q})(y)dy+\frac{1}{\left\vert Q\right\vert
}\int_{Q}M(w(1-\chi_{3Q}))(y)dy\\
&  \leq\frac{c}{\left\vert 3Q\right\vert }\int_{3Q}M(w\chi_{3Q})(y)dy+c\inf
_{z\in Q}Mw(z)\\
&  \leq\frac{\tilde{c}}{\left\vert 3Q\right\vert }\int_{3Q}w(x)dx+cMw(x)\text{
(by (\ref{re2}))}\\
&  \leq CMw(x).
\end{align*}
Consequently, for all $x\in R^{n},$%
\[
M(Mw)(x)\leq CMw(x).
\]
Taking rearrangements, and then applying Herz's inequality, yields%
\begin{equation}
\left(  M(Mw)\right)  ^{\ast}(t)\preceq\left(  Mw\right)  ^{\ast
}(t).\label{porlos dos}%
\end{equation}
Applying Herz's equivalence (repeatedly), and the known calculation of the
corresponding $K-$functional, we get the following equivalent expressions to
the left and right hand sides of (\ref{porlos dos}):
\[
\left(  M(Mw)\right)  ^{\ast}(t)\approx\frac{1}{t}\int_{0}^{t}(Mw)^{\ast
}(s)ds\approx\frac{1}{t}\int_{0}^{t}f^{\ast\ast}(s)ds=\frac{1}{t}\int_{0}%
^{t}K(s,f;L^{1},L^{\infty})\frac{ds}{s},
\]%
\[
\left(  Mw\right)  ^{\ast}(t)\approx\frac{1}{t}\int_{0}^{t}f^{\ast
}(s)ds=K(t,f;L^{1},L^{\infty}).
\]
Rewriting (\ref{porlos dos}) using this information we get%
\[
\int_{0}^{t}K(s,f;L^{1},L^{\infty})\frac{ds}{s}\preceq K(t,f;L^{1},L^{\infty
}).
\]
Thus, if $w$ is a weight satisfies (\ref{re2}) then $w\in RH_{LLogL}.$
\end{remark}

\begin{remark}
\label{re:jm}We cannot resist to point out a connection to extrapolation
theory. Indeed, the theory of \cite{JM91} produces many examples of operators
weak types $(1,1),(\infty,\infty)$ by means of extrapolating inequalities. A
prototype result can be stated as follows: if $T$ is an operator on a real
interpolation scale $\{\vec{X}_{\theta,q}\}_{\theta,q},$ $\theta\in(0,1),$
$q\geq1,$ such that%
\[
\left\Vert T\right\Vert _{\vec{X}_{\theta,q}\rightarrow\vec{X}_{\theta,q}%
}\preceq(1-\theta)^{-1}\theta^{-1}%
\]
then $T$ satisfies (\ref{re2}) (cf. \cite{JM91}). In particular, if $\vec{X}$
is ordered and we are interested only on the behavior on spaces near the
larger space $X_{0},$ that is when $\theta\rightarrow0,$ then operators that
satisfy $\left\Vert T\right\Vert _{\vec{X}_{\theta,q}\rightarrow\vec
{X}_{\theta,q}}\preceq\theta^{-1}$ as $\theta\rightarrow0,$ can be
characterized by (cf. \cite{JM91})%
\begin{equation}
\frac{K(r,Tf;\vec{X})}{r}\leq C\{\frac{1}{r}\int_{0}^{r}K(s,f;\vec{X}%
)\frac{ds}{s}\}\label{loextrapola2}%
\end{equation}
In particular, when $\vec{X}=(L^{1},L^{\infty})$ this leads to the
rearrangement inequalities,%
\begin{equation}
\frac{1}{r}\int_{0}^{r}(Tf)^{\ast}(s)ds\leq\frac{C}{r}\int_{0}^{r}f^{\ast\ast
}(s)ds.\label{loextrapola1}%
\end{equation}
The idea behind Theorem \ref{loextrapola} is that if $f\in RH_{LLogL}$ we
have
\[
\frac{1}{r}\int_{0}^{r}K(s,f;\vec{X})\frac{ds}{s}\leq\left\Vert f\right\Vert
_{RH_{LLogL}}\frac{K(r,f;\vec{X})}{r}%
\]
therefore if we integrate (\ref{loextrapola1}) we can use the $RH_{LLogL}$
condition twice on the right hand side to obtain
\[
\int_{0}^{t}\frac{K(r,Tf;L^{1},L^{\infty})}{r}dr\leq C\left\Vert f\right\Vert
_{RH_{LLogL}}^{2}K(t,f;L^{1},L^{\infty})
\]
which effectively reverses (\ref{loextrapola2})!
\end{remark}

\subsection{The Stromberg-Wheeden Theorem\label{sec:strom}}

In this section we apply our theory to give a simple proof of the
Stromberg-Wheeden theorem, which is arguably one of the cornerstones of the
classical theory of weighted norm inequalities (cf. \cite{CN}).

\begin{theorem}
$w\in RH_{p}$ if and only if $w^{p}\in A_{\infty}$
\end{theorem}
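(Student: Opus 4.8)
The plan is to carry everything through the index characterizations already proved: by Theorem \ref{teo:galvanizado} one has $RH_{p}=\{w:ind\{\phi_{w,Q}\}_{Q}>1/p^{\prime}\}$, while, applied to the weight $w^{p}$, $A_{\infty}=RH=\{v:ind\{\phi_{v,Q}\}_{Q}>0\}$. Since $t\mapsto t^{p}$ is increasing, $(w^{p}\chi_{Q})^{\ast}=((w\chi_{Q})^{\ast})^{p}$, so $\phi_{w^{p},Q}(s)=\int_{0}^{s}((w\chi_{Q})^{\ast}(u))^{p}\,du$, and the statement reduces to the equivalence
\[
ind\{\phi_{w,Q}\}_{Q}>1/p^{\prime}\quad\Leftrightarrow\quad ind\{\phi_{w^{p},Q}\}_{Q}>0 .
\]

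For the implication $w\in RH_{p}\Rightarrow w^{p}\in A_{\infty}$ I would use the self-improvement already spelled out after (\ref{rhp}): from $ind\{\phi_{w,Q}\}_{Q}>1/p^{\prime}$ one may choose $p_{0}>p$ with $ind\{\phi_{w,Q}\}_{Q}>1/p_{0}^{\prime}$, i.e. $w\in RH_{p_{0}}$. Then for every cube $Q$, using the $RH_{p_{0}}$ inequality and then Jensen's inequality $\frac{1}{|Q|}\int_{Q}w\le\bigl(\frac{1}{|Q|}\int_{Q}w^{p}\bigr)^{1/p}$,
\[
\left(\frac{1}{|Q|}\int_{Q}(w^{p})^{p_{0}/p}\right)^{p/p_{0}}=\left(\left(\frac{1}{|Q|}\int_{Q}w^{p_{0}}\right)^{1/p_{0}}\right)^{p}\le\left(\left\Vert w\right\Vert _{RH_{p_{0}}}\frac{1}{|Q|}\int_{Q}w\right)^{p}\le\left\Vert w\right\Vert _{RH_{p_{0}}}^{p}\,\frac{1}{|Q|}\int_{Q}w^{p},
\]
so $w^{p}\in RH_{p_{0}/p}$ with $p_{0}/p>1$; hence $w^{p}\in RH=A_{\infty}$.

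For the converse I would pass through the rearrangement index of \cite{ACKS}. By Theorem \ref{teo:delossuecos} together with $A_{\infty}=RH$, the hypothesis $w^{p}\in A_{\infty}$ means $\widetilde{ind}(w^{p})<1$; and since $(w^{p}\chi_{Q})^{\ast}=((w\chi_{Q})^{\ast})^{p}$, the defining inequality (\ref{acks1}) for $w^{p}$ with exponent $\lambda$ is literally (\ref{acks1}) for $w$ with exponent $\lambda/p$, so $\widetilde{ind}(w^{p})=p\,\widetilde{ind}(w)$ and therefore $\widetilde{ind}(w)<1/p$. Thus there are $\mu<1/p$, $\widetilde{\gamma}>0$, $r\in(0,1)$ with $(w\chi_{Q})^{\ast}(s)\le\widetilde{\gamma}\,(s/t)^{-\mu}(w\chi_{Q})^{\ast}(t)$ for all cubes $Q$ and $0<s\le t<r|Q|$. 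Since $\mu p<1$, raising to the $p$-th power and integrating over $s\in(0,t)$ gives $\int_{0}^{t}(w\chi_{Q})^{\ast}(s)^{p}\,ds\le\frac{\widetilde{\gamma}^{p}}{1-\mu p}\,t\,(w\chi_{Q})^{\ast}(t)^{p}$ for $0<t<r|Q|$, and since $(w\chi_{Q})^{\ast}$ decreases this yields $\bigl\{\frac{1}{t}\int_{0}^{t}(w\chi_{Q})^{\ast}(s)^{p}ds\bigr\}^{1/p}\le C\,\frac{1}{t}\int_{0}^{t}(w\chi_{Q})^{\ast}(s)ds$ on $(0,r|Q|)$; extending this to $0<t<|Q|$ by the monotonicity bookkeeping used in the local-to-global step of Lemma \ref{benson}$(iii)$, the Herz-type equivalence (\ref{abs2}) from the proof of Theorem \ref{teo:p} gives $w\in RH_{p}$.

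The elementary manipulations are routine; the one delicate point is the converse, where the ``higher integrability of $w^{p}$'' encoded by $A_{\infty}$ must be converted into pointwise control of $(w\chi_{Q})^{\ast}$ sharp enough to make $\int_{0}^{t}(w\chi_{Q})^{\ast}(s)^{p}ds$ both convergent and comparable to $t(w\chi_{Q})^{\ast}(t)^{p}$ --- precisely where the threshold $\widetilde{ind}(w^{p})<1$, i.e. $\mu p<1$, enters, which is why the scaling identity $\widetilde{ind}(w^{p})=p\,\widetilde{ind}(w)$ is the heart of the matter. If one prefers to stay inside the $\phi$-framework and avoid \cite{ACKS}: from $w^{p}\in A_{\infty}=RH$ one gets $w^{p}\in RH_{r}$ for some $r>1$, hence by (\ref{demandada}) $t(w\chi_{Q})^{\ast}(t)^{p}\approx\int_{0}^{t}(w\chi_{Q})^{\ast}(s)^{p}ds$ on $(0,|Q|/\rho)$; together with $ind\{\phi_{w^{p},Q}\}_{Q}>0$ this forces $(w\chi_{Q})^{\ast}(s)\,s^{(1-\delta)/p}$ to be almost increasing for some $\delta>0$, and the same integration as above then gives $w\in RH_{p}$.
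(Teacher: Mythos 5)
Your proof is correct, and the overall scheme (reduce to index statements about $\phi_{w,Q}$ and $\phi_{w^{p},Q}$ via Theorem \ref{teo:galvanizado}, then exchange the $K$-functional for the pointwise rearrangement using what is established inside the proof of Theorem \ref{teo:delossuecos}, where the identity $(w^{p}\chi_{Q})^{\ast}=((w\chi_{Q})^{\ast})^{p}$ does the work) is the one the paper follows. The details diverge in two places, and both divergences are worth recording.

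For the direction $w\in RH_{p}\Rightarrow w^{p}\in A_{\infty}$ the paper simply runs its converse argument backwards: from $ind\{\phi_{w,Q}\}_{Q}>1/p'$ it extracts a $\delta>0$ with $K(s,w\chi_{Q})s^{-1/p'-\delta/p}$ a.i., passes to $(w\chi_{Q})^{\ast}(s)s^{(1-\delta)/p}$ a.i., raises to the $p$-th power, and returns to the $K$-functional. You instead cash in the self-improvement ($w\in RH_{p_{0}}$ for some $p_{0}>p$) and finish with one line of Jensen's inequality, getting the quantitative statement $w^{p}\in RH_{p_{0}/p}$. This is more elementary and more explicit: it tells you exactly which $RH_{r}$ class $w^{p}$ lands in, whereas the paper's reversal only yields membership in $A_{\infty}$.

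For the converse, your route 1 rephrases the paper's chain of a.i.\ manipulations as the clean scaling identity $\widetilde{ind}(w^{p})=p\,\widetilde{ind}(w)$ (correct, since replacing $w$ by $w^{p}$ in (\ref{acks1}) just multiplies $\lambda$ by $p$ and replaces $\widetilde{\gamma}$ by $\widetilde{\gamma}^{1/p}$, which the infimum in the definition ignores). This is a nice isolation of the ``heart of the matter'' that the paper leaves implicit in the exponent bookkeeping $\mu=1-1/p+\delta/p$. Your route 2 is essentially the paper's own argument. The only point I would ask you to make precise is the ``extending this to $0<t<|Q|$'' step: you invoke Lemma \ref{benson}(iii), but that lemma is about the $K$-functional expressions $\phi_{w,Q,\beta,q}$, not about the averages $\frac1t\int_{0}^{t}(w\chi_{Q})^{\ast}(s)^{p}ds$ directly; what you actually need is the elementary estimate that for $t\in(r|Q|,|Q|)$ one bounds $\int_{r|Q|}^{t}(w\chi_{Q})^{\ast}(s)^{p}ds\leq t\,(w\chi_{Q})^{\ast}(r|Q|)^{p}\leq r^{-p}t^{1-p}\bigl(\int_{0}^{t}(w\chi_{Q})^{\ast}\bigr)^{p}$ and similarly bounds the portion over $(0,r|Q|)$ by the already-known local inequality together with $(r|Q|)^{1-p}\leq r^{1-p}t^{1-p}$. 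Writing this out removes the only handwave in an otherwise complete argument.
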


\begin{proof}
Suppose that $w^{p}\in A_{\infty}$. By the characterization of $A_{\infty}$
given in \ref{teo:galvanizado}, $w^{p}\in A_{\infty}$ if and only if
\begin{equation}
ind\{K(t,w^{p}\chi_{Q};L^{1}(Q),L^{\infty}(Q))\}_{Q}>0. \label{prop0}%
\end{equation}
It follows that there exists $\delta>0$ such that $\frac{K(t,w^{p}\chi
_{Q};L^{1}(Q),L^{\infty}(Q))}{t^{\delta}}$ a.i. on $(0,\gamma\left\vert
Q\right\vert ),$ but then the proof of Theorem \ref{teo:delossuecos} shows
that we $[(w\chi_{Q})^{\ast}(s)]^{p}s^{1-\delta}$ is a.i.; therefore, raising
this function to the power $1/p,$ yields that $(w\chi_{Q})^{\ast}%
(s)s^{\frac{1-\delta}{p}}$ is also a.i. Consequently, if we let $\mu
=1-1/p+\delta/p,$ then we can write $\frac{1-\delta}{p}=1-\mu,$ and once again
by the proof of Theorem \ref{teo:delossuecos} we find that $K(s,w\chi
_{Q};L^{1}(Q),L^{\infty}(Q))s^{-\mu}=$ $K(s,w\chi_{Q};L^{1}(Q),L^{\infty
}(Q))s^{-1/p^{\prime}-\delta/p}$ is a.i., whence%
\begin{equation}
ind\{K(\cdot,w\chi_{Q};L^{1}(Q),L^{\infty}(Q))\}_{Q}>1/p^{\prime}.
\label{prop}%
\end{equation}
Consequently, by Theorem \ref{teo:galvanizado}%
\[
w\in RH_{p}.
\]
Conversely, all the steps can be reversed. Indeed, suppose that $w\in RH_{p}.$
Then (\ref{prop}) holds, and consequently for some $\delta>0,$ $K(s,w\chi
_{Q};L^{1}(Q),L^{\infty}(Q)))s^{-1/p^{\prime}-\delta/p}$ is a.i. Now the proof
of Theorem \ref{teo:delossuecos} implies that $(w\chi_{Q})^{\ast}%
(s)s^{\frac{1-\delta}{p}}$ is a.i. and hence $[(w\chi_{Q})^{\ast}%
(s)]^{p}s^{1-\delta}$ is a.i., and once again by the proof of Theorem
\ref{teo:delossuecos}, $\frac{K(t,w^{p}\chi_{Q};L^{1}(Q),L^{\infty}%
(Q))}{t^{\delta}}$ is a.i. and therefore (\ref{prop0}) holds, yielding that
$w^{p}\in A_{\infty}.$
\end{proof}

\section{Some Problems\label{sec:problems}}

We would like to close this paper with some open-ended problems connected with
the developments in this paper that we consider of some potential interest.
The problems are thus mainly focussed on exploring the connections between
weighted norm inequalities and interpolation/extrapolation theoretical methods.

\begin{enumerate}
\item \textbf{Interpolation/Extrapolation Methods}: So far, the interpolation
methods\footnote{as opposed to the more common application of interpolation
\textquotedblleft Then by interpolation".} we have been developing to study
classes of weights are built on the real method of interpolation. It is likely
that other methods of interpolation could also be of interest in this area. In
particular, the r\^{o}le of the complex method of interpolation of
Calder\'{o}n ought to be explored. For example, the Calder\'{o}n method of
interpolation of lattices, e.g. the \textquotedblleft$X_{0}^{1-\theta}%
X_{1}^{\theta}$\textquotedblright\ method, is likely to be relevant in
connection with factorizations of weighted norm inequalities and of their
underlying classes of weights. Also intriguing are the possible connections
with the interpolation method of \cite{cw}, which allows to treat the real and
complex methods of interpolation in a unified way. In particular, \cite{cw}
introduced a new variant of the $K-$functional that makes this tool available
for the complex method of interpolation. We think it could be of interest to
explore its application within the framework developed in this paper.
Likewise, the method of orbits (cf. \cite{ov}) could also play a r\^{o}le. In
fact, some results that connect the method of orbits and the abstract Gehring
Lemma was started to be explored in \cite{BMR1}, and there is a detailed
application of orbital methods to the study of self-improving (or
\textquotedblleft open" properties) in \cite{kru} (cf. also item \textbf{9}
below). It seems to us that the theory of indices is likely to have an impact
reformulating and clarifying results of \cite{kru} and its applications to the
theory of weighted norm inequalities and pde's. Likewise, the results of
Section \ref{extrapola}, and, in particular, Remark \ref{re:jm}, suggest new
potential applications of extrapolation theory to weighted norm
inequalities.\bigskip
\end{enumerate}

\begin{enumerate}
\item[2] \textbf{Function Spaces}: In connection with the set of problems
described in \textbf{1} above, it would be of interest to study classes of
weights that are naturally associated with more general function spaces. In
this direction, applications to weighted norm inequalities in the setting of
Orlicz spaces would be of obvious interest and a number of results in this
direction already appear in \cite{BMR}, \cite{Ha}, \cite{MM}, \cite{Mo1} and
the references therein. In this connection see also item 5 below.\bigskip
\end{enumerate}

\begin{enumerate}
\item[3] \textbf{Other classes of weights:} Interpolation methods have the
potential to be useful to study other classes of weights. Among the classes of
weights awaiting interpolation treatment: The class of $A_{p}$ weights (cf.
\cite{GR}), the classes $C_{p},$ $B_{p}$ of weights (cf. \cite{BMR3},
\cite{Duo}, \cite{Fer} and the references therein). Similar questions for two
weight type inequalities (cf. \cite{a}).\bigskip
\end{enumerate}

\begin{enumerate}
\item[4] \textbf{The role of constants}: in the theory of $RH_{LLogL}$ weights
is discussed, for example, in \cite{Moscariello}, \cite{bes}. We ask for a
treatment of the role of constants in the theory of weights in the context of
the interpolation/extrapolation methods. See also the next item.\bigskip

\item[5] \textbf{Reverse Hardy Inequalities.} There is an interesting
connection between Gehring's Lemma and sharp constants for reverse Hardy
inequalities (cf. \cite{BMR1}, \cite{MM}, \cite{Mii}), which we did not
discuss in this paper in order not to exasperate the editors of this volume.
In this connection it would be interesting to extend the sharp reversed Hardy
inequalities for decreasing functions, which are known for $L^{p}$ norms, to
more general function spaces. \bigskip

\item[6] \textbf{The class }$A_{p,q}$\textbf{ }of weights for which the
maximal operator of Hardy-Littlewood is bounded on $L(p,q)$ were studied in
\cite{chu}. In particular, it was shown there that $A_{p,q}=A_{p}%
,1<p<\infty,1<q<\infty.$ The case $q=1$ is discussed in \cite{hunt}. We are
not aware of a systematic study.\bigskip

\item[7] \textbf{The class }$RH_{\infty}$ was apparently first systematically
studied in \cite{CN}, where it was defined through the use of the minimal
operator%
\[
\mathfrak{M}f\left(  x\right)  =\inf\limits_{Q\ni x}\frac{1}{\left\vert
Q\right\vert }\int_{Q}\left\vert f\left(  x\right)  \right\vert dx.
\]
We say that a weight $w\in RH_{\infty}$ if there exists $C>0$ such that
$w\left(  x\right)  \leq C\mathfrak{M}w\left(  x\right)  $ $a.e.$ It would be
interesting to understand the connection of this operator with interpolation
theory.\bigskip

\item[8] \textbf{Discrete Gehring type inequalities via interpolation.} We
would like to suggest the project of understanding recent results on discrete
reverse H\"{o}lder inequalities (cf. \cite{Sak1}, see also \cite{Sak} for
related work) using the methods developed in this paper. Likewise, another
potential application of our theory is the setting of \textbf{metric spaces}
(cf. \cite{Mo1} and the references therein).\bigskip

\item[9] \textbf{Self-improving inequalities and PDEs.} This topic is of
course of central interest and continues to be a source of problems and
inspiration for applications of interpolation methods. Here is a far from
complete sample of references in this direction that we happen to be aware of:
\cite{AM}, \cite{av}, \cite{Ber}, \cite{cheng}, \cite{FeKPi}, \cite{Kenig},
\cite{sb1}, \cite{Sc}, \cite{tolsa}, and the references therein.
\end{enumerate}

\end{document}